\newtheorem{thm}{Theorem}[section]
\newtheorem{cor}[thm]{Corollary}
\newtheorem{lem}[thm]{Lemma}
\theoremstyle{definition}
\newtheorem{defn}[thm]{Definition}
\theoremstyle{remark}
\newtheorem{rem}[thm]{Remark}
\newcommand{\norm}[1]{\left\Vert#1\right\Vert}
\newcommand{\abs}[1]{\left\vert#1\right\vert}
\newcommand{\set}[1]{\left\{#1\right\}}
\newcommand{\R}{\mathbb R}
\newcommand{\C}{\mathbb C}
\newcommand{\Z}{\mathbb Z}
\newcommand{\To}{\rightarrow}
\DeclareMathOperator{\HM}{HM}
\DeclareMathOperator{\ess}{ess}
\def\normQR#1#2#3{\norm{#1}_{L^{#2}_tL^{#3}_x}}
\def\normP#1#2{\norm{#1}_{L^{#2}}}
\def\Int{\int_{-\infty}^{\infty} }
\def\inT{\int_{0}^{t}}
\def\intI{\int_{-\infty}^{t}}
\def\rp#1{\frac 1 {#1}}
\def\Sch{Schr\"odinger }
\newcommand{\ktoper}[1]{\partial_t #1(t,x,v) + v\cdot\nabla_x #1(t,x,v)}
\def\coor{(t,x,v)}
\def\normA#1#2{\norm{#1}_{#2}}
\def\Ol{\mathcal O_\lambda}
\def\normLx#1#2{\norm{#1}_{L^{#2}_x}}
\def\spaceLx#1{{L^{#1}_x}}
\def\normLv#1#2{\norm{#1}_{L^{#2}_v}}
\def\trip{(q,r,p)}
\def\tript{(\tilde q, \tilde r, \tilde p)}
\def\triptrip{(1/q, 1/r, 1/p, 1/\tilde q, 1/ \tilde r, 1/ \tilde p)}
\def\Triptrip{(1/Q, 1/R, 1/P, 1/\tilde Q, 1/ \tilde R, 1/ \tilde P)}
\def\normQRP#1#2#3#4{\norm{#1}_{L^{#2}_tL^{#3}_xL^{#4}_v}}
\def\normQRPloc#1#2#3#4#5{\norm{#1}_{L^{#2}_t(#5; L^{#3}_xL^{#4}_v)}}
\def\normA#1#2{\norm{#1}_{L^{#2}_{x,v}}}
\def\normBC#1#2#3{\norm{#1}_{L^{#2}_{x}L^{#3}_{v}}}
\def\normQRPloc#1#2#3#4#5{\norm{#1}_{L^{#2}_t(#5;L^{#3}_xL^{#4}_v)}}
\def\normQRPf#1#2#3#4{\norm{#1}_{L^{#2}_tL^{#3}_xL^{#4}_v(V)}}
\def\normQRPfd#1#2#3#4{\norm{#1}_{L^{\tilde #2'}_tL^{\tilde #3'}_xL^{\tilde #4'}_v(V)}}
\def\spaceLqrpftd{L^{\tilde q'}_tL^{\tilde r'}_xL^{\tilde p'}_v(V)}
\def\spaceLqrpf{L^{q}_tL^{r}_xL^{p}_v(V)}
\def\Ol{\mathcal{O}_\lambda}
\def\normInfty#1{\norm{#1}_{\infty}}
\def\normQRP#1#2#3#4{\norm{#1}_{L^{#2}_tL^{#3}_xL^{#4}_v}}
\def\normQRPD#1#2#3#4{\norm{#1}_{L^{\tilde #2'}_tL^{\tilde #3'}_xL^{\tilde #4'}_v}}
\def\normQRPtd#1#2#3#4{\norm{#1}_{L^{\tilde #2'}_tL^{\tilde #3'}_xL^{\tilde #4'}_v}}
\def\normQRPloc#1#2#3#4#5{\norm{#1}_{L^{#2}_t(#5; L^{#3}_xL^{#4}_v)}}
\def\normQRPDloc#1#2#3#4#5{\norm{#1}_{L^{\tilde #2'}_t(#5; L^{\tilde #3'}_xL^{\tilde #4'}_v)}}
\def\normLa#1#2{\norm{#1}_{L^{#2}_{x,v}}}
\def\normLbc#1#2#3{\norm{#1}_{L^{#2}_{x}L^{#3}_v}}
\def\spaceLtwo{L^2_{x,v}}
\def\spaceLa#1{L^{#1}_{x,v}}
\def\spaceLbc{L^{b}_{x} L^{c}_{v}}
\def\spaceLqrp{L^q_tL^r_xL^p_v}
\def\spaceLqrpd{L^{q'}_tL^{r'}_xL^{p'}_v}
\def\spaceLqrptd{L^{\tilde q'}_tL^{\tilde r'}_xL^{\tilde p'}_v}
\def\SpaceLqrp#1#2#3{L^{#1}_tL^{#2}_xL^{#3}_v}
\def\SpaceLqrploc#1#2#3#4{L^{#1}_t(#4; L^{#2}_xL^{#3}_v)}
\def\SpaceLqrpd#1#2#3{L^{#1'}_tL^{#2'}_xL^{#3'}_v}
\def\SpaceLqrptd#1#2#3{L^{\tilde #1'}_tL^{\tilde #2'}_xL^{\tilde #3'}_v}
\def\SpaceLqrptdloc#1#2#3#4{L^{\tilde #1'}_t(#4, L^{\tilde #2'}_xL^{\tilde #3'}_v)}
\def\SpaceLqrptdMod#1#2#3{L^{#1}_tL^{\tilde #2'}_xL^{\tilde #3'}_v}
\def\SpaceLqrpdMod#1#2#3#4{L^{#1', #2}_tL^{#3'}_xL^{#4'}_v}
\def\spaceQRPlocd#1#2#3{L^{\tilde #1'}_{t}([0,T];L^{\tilde #2'}_xL^{\tilde #3'}_v)}
\def\normXVV#1#2#3#4{\norm{#1}_{L^{#2}_xL^{#3}_vL^{#4}_{v'}}}
\def\normP#1#2{\norm{#1}_{L^{#2}_x}}
\def\qtet{(q,r,p,b,c)}
\def\normLL#1{\norm{#1}_{L^{q}(\lambda I; L^{r}_xL^{p}_v)}}
\def\normRL#1{\norm{#1}_{L^{\tilde q'}(\lambda J; L^{\tilde r'}_xL^{\tilde p'}_v)}}
\def\bbeta{\frac 1 q + \frac 1 {\tilde{q}} - n\left(1 - \frac 1 r - \rp {\tilde{r}}\right)}
\def\betta{{n\left(\frac 1 p - \frac 1 r\right)}}
\def\qconds{\frac 1 {\tilde{r}} - \frac 1 {\tilde{p}}  - \frac 1 {r} + \frac 1 {p} \leq \frac 2 {n q}}
\def\qcondts{\frac 1 r - \frac 1 p  - \frac 1 {\tilde r} + \frac 1 {\tilde p} \leq \frac 2 {n \tilde{q}}}
\def\scondAB{\rp R + \rp P + \rp {\tilde R} + \rp {\tilde P} = 2}
\def\triptrip{(1/q, 1/r, 1/p, 1/\tilde q, 1/ \tilde r, 1/ \tilde p)}
\def\Triptrip{(1/Q, 1/R, 1/P, 1/\tilde Q, 1/ \tilde R, 1/ \tilde P)}
\def\Str{Strichartz }
\begin{document}

\title{Strichartz Estimates for the Kinetic Transport Equation}
\author{Evgeni Y Ovcharov}%
\address{Angewandte Mathematik und Bioquant, Universit\"at Heidelberg, INF 267, Heidelberg 69120, Germany}
\email{evgeni.ovcharov@bioquant.uni-heidelberg.de}

\thanks{Much of this work was done while at the University of Edinburgh. The author would like to thank Damiano Foschi for the several suggestions he made about this work.}
\subjclass[2010]{Primary: 35B45, Secondary: 35Q20} 
\keywords{Strichartz estimates, kinetic transport, dispersive}

\date{\today}

\begin{abstract}
In this paper we prove Strichartz estimates for the kinetic transport equation and make a detailed investigation on their range of validity. In one spatial dimension we find essentially all possible estimates, while in higher dimensions some endpoint and inhomogeneous estimates remain open. The remaining estimates are analogous to the remaining open inhomogeneous Strichartz estimates in other contexts. The Strichartz estimates that we derive extend the previous work by Castella and Perthame \cite{CP} (1996) and  Keel and Tao \cite{KT} (1998) in the context of the kinetic transport equation, and the techniques of Foschi \cite{F} to the current setting.
\end{abstract}

\maketitle

\section{Introduction}
The purpose of this paper is to study the range of validity of the Strichartz estimates for the kinetic transport (KT) equation
\begin{align}
 &\ktoper{u} = F\coor, \quad \coor \in (0,\infty)\times\R^n\times \R^n, \label{eq: KT 1} \\
 &u(0,x,v) = f(x,v).\label{eq: KT 2}
\end{align}
The solution $u$ to \eqref{eq: KT 1}, \eqref{eq: KT 2} has the form $u=U(t)f + W(t)F$, where
\begin{align*}
 U(t)f = f(x-tv, v), \quad W(t)F = \intI U(t-s) F(s) ds,
\end{align*}
and supp $F \subseteq (0, \infty)$. We want to study estimates of the form
\begin{equation*}
 \normQRP{u}{q}{r}{p} \lesssim \normA{f}{a} + \normQRP{F}{\tilde q'}{\tilde r'}{\tilde p'},
\end{equation*}
where $\spaceLqrp$ stands for the mixed Lebesgue space $L^q((0,\infty); L^r(\R^n; L^p(\R^n)))$, and $\spaceLa{a}$ stands for $L^a(\R^{2n})$. In the sequel we shall study separately the full range of \Str estimates for each operator $U(t)$ and $W(t)$. The \Str estimates for the KT equation of the form
\begin{equation}\label{eq: hom Strich KT}
 \normQRP{U(t)f}{q}{r}{p} \lesssim \normA{f}{a}
\end{equation}
are called homogeneous, while the estimates
\begin{equation}\label{eq: inhom Strich KT}
 \normQRP{W(t)F}{q}{r}{p} \lesssim \normQRP{F}{\tilde q'}{\tilde r'}{\tilde p'}
\end{equation}
are called inhomogeneous. As it is typically done, we shall prove estimates \eqref{eq: hom Strich KT} and \eqref{eq: inhom Strich KT} under the slightly more general assumptions $t \in \R$ and supp $F \subseteq (-\infty, \infty)$. For the sake of simplicity we shall again use the same notation $\spaceLqrp$ for the space $L^q(\R; L^r(\R^n; L^p(\R^n)))$.

\Str estimates for the KT equation appeared first in the note of Castella and Perthame \cite{CP} (1996) where a range of homogeneous estimates and some special inhomogeneous estimates are presented. In the seminal paper of Keel and Tao \cite{KT} (1998) the authors dedicate a small paragraph to the KT equation where they extend the homogeneous estimates proved in \cite{CP}. However, the endpoint homogeneous estimate proves too difficult to be resolved by the methods presented in \cite{KT}, which initiates an ongoing mathematical investigation. The first partial answer in that direction is given by Guo and Peng \cite{GP} (2007) who provide counterexamples in one spatial dimension confirming the (expected) failure of the endpoint estimate
\begin{align} \label{est: endpoint KT}
 \normQRP{U(t)f}{2}{\infty}{1} \lesssim \normA{f}{2}
\end{align}
there.

Presently, we extend the work of the previously mentioned authors by making a detailed analysis of the range of validity of the \Str estimates for the KT equation. The new estimates that we prove concern mostly the inhomogeneous operator $W(t)$ but we also prove new estimates for $U(t)$ of the more general form
\begin{equation}\label{eq: hom Strich mix}
 \normQRP{U(t)f}{q}{r}{p} \lesssim \normBC{f}{b}{c}.
\end{equation}
In fact we prove that the latter estimates are equivalent to some special inhomogeneous estimates \eqref{eq: inhom Strich KT} which explains why the investigation of the inhomogeneous estimates is to us of primary interest.

As a motivation for studying \Str estimates for the KT equation we can point out the fact that they have been a very fruitful tool in the context of the wave and the \Sch equations in the analysis of the nonlinear Cauchy problem. Application of such type appeared in Bournaveas et al. \cite{B} (2008) where the authors prove the existence of some weak solutions to a nonlinear kinetic system modeling chemotaxis.

The paper is organized as follows. In the next section we present the main results of the paper which we hope are given in a form convenient for referencing. In the section immediately after it we make some additional
introductory remarks that will be useful to those who wish to read on with our proofs. They follow in the sections after it.

\section{Strichartz estimates for the KT equation} \label{sect: main}

In order to formulate our results we first need several definitions.

\begin{defn} \label{dfn: admiss 2}
We say that the exponent triplet $\trip$, $1 \leq p,q, r \leq \infty$, is \textit{KT-admissible} if
\begin{gather}
 \rp q = \frac n 2 \left(\rp p - \rp r \right), \quad a \stackrel{def}{=} \HM(p,r),\label{eq: KT admiss dfn 1}\\
 1\leq a \leq \infty, \quad p^*(a) \leq p \leq a, \quad a \leq r \leq r^*(a), \label{eq: KT admiss dfn 2}
\end{gather}
except in the case $n=1$, $\trip=(a, \infty, a/2)$.
\end{defn}
In the above definition by $\HM(p,r)$ we have denoted the harmonic mean of $p$ and $r$, i.e. $a=\HM(p,r)$ whenever
\[
  \rp a = \rp 2 \left( \rp r + \rp p \right).
\]
For convenience we give explicitly the exact lower boundary $p^*$ to $p$ and the exact upper boundary $r^*$ to $r$ which are given in
\begin{align}\label{cond: r* KT}
\begin{cases}
 p^*(a) = \frac {na} {n+1}, \quad r^*(a) =  \frac {na} {n-1},   \quad \text{if} \quad \frac {n+1} n \leq a \leq \infty,\\
 p^*(a) = 1,   \qquad \;     r^*(a) =  \frac {a} {2-a},         \quad \text{if} \qquad  1\leq a\leq \frac {n+1} n.
\end{cases}
\end{align}
Note that the second line in \eqref{cond: r* KT} is placed to restrict the Lebesgue exponents $p$ and $r$ in the range $[1,\infty]$.

We have used above the convention that $1/0=\infty$, and thus, for example, for $n=1$ $r^*(a)=\infty$. Furthermore, throughout this text we shall always use the convention $1/\infty=0$ and $1/0=\infty$ in the context of Lebesgue exponents. Triplets of the form $\trip=(a,r^*(a), p^*(a))$, for $(n+1)/n\leq a<\infty$, will be called endpoint. The H\"older conjugate exponent will be denoted by $'$ e.g. $1/r+1/r'=1$. Conditions \eqref{eq: KT admiss dfn 2}, \eqref{cond: r* KT} are equivalent to $a \leq q$, and $p\leq r$, and $1\leq a$, $p$, condition \eqref{eq: KT admiss dfn 1} is equivalent to
\[
 \rp q + \frac n r = \frac n a, \quad \HM(p,r)=a.
\]
Note that although the latter redaction of condition \eqref{eq: KT admiss dfn 1} resembles more closely the admissability conditions for the wave and the \Sch equations, the former version is more natural in the present context in view of the fact that in the inhomogeneous estimates the Lebesgue exponent $a$ does not appear explicitly.

To describe the range of the inhomogeneous estimates we shall need the next two definitions. Following Foschi \cite{F}, we give the following

\begin{defn} \label{dfn: accept}
We say that the exponent triplet $\trip$ is \textit{KT-acceptable} if
\begin{align}\label{cond: KT accept}
\rp q < n \left( \rp p - \rp r \right), \quad 1\leq q\leq\infty, \quad 1 \leq p < r \leq \infty,
\end{align}
or if $q=\infty$, $1 \leq p=r \leq \infty$.
\end{defn}

Note that a KT-acceptable triplet is always KT-admissible. To further describe the range of validity of the inhomogeneous estimates we give the following

\begin{defn} \label{dfn: joint accept}
We say that the two KT-acceptable exponent triplets $\trip$ and $\tript$ are {\it jointly KT-acceptable} if
\begin{align}
\rp q + \rp {\tilde q}& = n\left(1 - \rp r - \rp {\tilde r}\right), \quad \rp q + \rp {\tilde q} \leq 1, \label{cond: joint KT accept 1}\\
&\HM(p,r)=\HM(\tilde p', \tilde r'),  \label{cond: joint KT accept 2} 
\end{align}
and if the exponents satisfy further the additional restrictions
\begin{itemize}
 \item[(i)] for $r,\tilde r \neq \infty$
 \begin{equation}\label{cond: glob inhom KT}
        \frac {n -1}{p'} < \frac {n}{\tilde{r}}, \quad \frac {n-1}{\tilde{p}'} < \frac {n}{r},
\end{equation}
 \item[(ii)] if $r = \infty$ then the point $\triptrip \in \Sigma_1 \cup B$,
\begin{align} \label{cond: sigma1}
 \begin{split}
 \Sigma_1 &=\set{\left(\mu,0,\kappa,\nu,1 - \kappa,1\right)\;:\;0 < \mu, \nu < 1,\;0<\mu+\nu < 1, \; \kappa = (\mu + \nu)/n}, \\
       B  &=(0,0,0,0,1,1),
 \end{split}
\end{align}
 \item[(iii)] if $\tilde r = \infty$ then the point $\triptrip \in \Sigma_2 \cup C$,
 \begin{align} \label{cond: sigma2}
 \begin{split}
  \Sigma_2 &=\set{\left(\mu,1-\kappa,1,\nu,0,\kappa\right)\;:\;0<\mu, \nu < 1, \; 0 < \mu+\nu < 1, \; \kappa = (\mu + \nu)/n}, \\
  C &=(0,1,1,0,0,0).
  \end{split}
\end{align}
\end{itemize}
\end{defn}

Conditions \eqref{cond: sigma1} and \eqref{cond: sigma2} are sharp which is demonstrated on counterexamples based on Besicovitch sets in Ovcharov \cite{Ovch4}.

\begin{thm} \label{thm: Strich admiss KT}
Let $u$ be the solution to the Cauchy problem for \eqref{eq: KT 1}, \eqref{eq: KT 2}. Then the estimate
\begin{equation} \label{est: Strich admiss KT}
 \normQRP{u}{q}{r}{p} \lesssim \normA{f}{a} + \normQRP{F}{\tilde q'}{\tilde r'}{\tilde p'},
\end{equation}
holds for all $f \in \spaceLa{a}$ and all $F \in L_t^{\tilde q'}L_x^{\tilde r'}L_v^{\tilde p'}$ if and only if $\trip$  and $\tript$ are two KT-admissible exponent triplets and $a=\HM(p,r)=\HM(\tilde p', \tilde r')$, apart from the case in higher dimensions $n>1$ of $\trip$ being an endpoint triplet for which the corresponding estimates  remain unresolved.
\end{thm}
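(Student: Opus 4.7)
The proof reduces to establishing separately the homogeneous bound $\normQRP{U(t)f}{q}{r}{p} \lesssim \normA{f}{a}$ and the inhomogeneous bound $\normQRP{W(t)F}{q}{r}{p} \lesssim \normQRP{F}{\tilde q'}{\tilde r'}{\tilde p'}$, since $u = U(t)f + W(t)F$. The ``if'' direction follows from the triangle inequality; the ``only if'' direction is obtained by setting $F=0$ (which isolates the homogeneous piece) and then $f=0$ (which isolates the inhomogeneous piece), with the observation that the norms of $f$ and $F$ are independent parameters of the data.

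For sufficiency of the homogeneous estimate, I would follow the Keel--Tao scheme: start from the Castella--Perthame dispersive decay for $U(t)$, interpolate it against the trivial $L^2_{x,v}$ conservation law, and apply the bilinear $TT^*$ machinery. This closes for every KT-admissible $\trip$ except at the double endpoint in $n>1$. For the inhomogeneous estimate, assuming both $\trip$ and $\tript$ are KT-admissible with $\HM(p,r) = \HM(\tilde p', \tilde r') = a$, I would compose the homogeneous estimate with its dual to obtain the unretarded bilinear bound on $\int_\R U(t-s)F(s)\,ds$ from $\spaceLqrptd$ into $\spaceLqrp$, and then invoke the Christ--Kiselev lemma to descend to the retarded operator $W(t)$. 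This last step requires $q > \tilde q'$, which is an additional reason the double-endpoint configuration must be excluded.

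Necessity of the algebraic identities is a scaling argument. Testing the estimate on rescaled data $f_{\lambda,\mu}(x,v) = f(\lambda x, \mu v)$ and using that $U(t)f_{\lambda,\mu}(x,v) = f(\lambda x - \lambda t v, \mu v)$, together with a further rescaling of the time variable, yields two independent matching conditions on the $\lambda$- and $\mu$-powers that together force both $1/q = (n/2)(1/p-1/r)$ and $1/p + 1/r = 2/a$, i.e., $a = \HM(p,r)$. The same scaling applied to $W(t)$ forces the dual identity $a = \HM(\tilde p', \tilde r')$. The corner restrictions $p^*(a) \leq p$ and $r \leq r^*(a)$ are then established by Knapp-type counterexamples concentrating the data on thin tubes in $(x,v)$-space aligned with the transport flow; the split in \eqref{cond: r* KT} is merely the transition where these corner bounds cease to be active before the Lebesgue exponents leave $[1,\infty]$.

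The step I expect to be delicate is the double endpoint $\trip = (a, r^*(a), p^*(a))$ in $n>1$: the Keel--Tao atomic decomposition at the extremal exponent no longer admits summation with acceptable loss, and no substitute argument is presently known in the KT context, so the theorem leaves these cases unresolved. The one-dimensional obstruction $\trip = (a, \infty, a/2)$ already excluded in Definition~\ref{dfn: admiss 2} is, by contrast, a genuine failure witnessed by the counterexample of Guo--Peng \cite{GP}.
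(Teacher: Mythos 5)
Your outline has the right global structure — dispersive decay plus transport invariance plus $TT^*$ plus duality — but it diverges from the paper in two places and has one genuine gap.

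On method: the paper deliberately avoids Christ--Kiselev. Lemma~\ref{lem: TT* KT} shows that for the kinetic transport flow the retarded operator $W(t)$ and the unretarded $TT^*$ are bounded on the same spaces simultaneously, by the reflection $(s,t,x)\to(-s,-t,-x)$ applied to the bilinear form. So one passes from the composition estimate to \eqref{eq: TT W} with no restriction $q>\tilde q'$ and no atomic decomposition. Likewise, for the necessity of the corner bounds the paper does not use Knapp-type concentrations: the scaling identities $a=\HM(p,r)$ and $1/q+n/r=n/a$ come from Lemma~\ref{lem: scaling} as you say, but $q\geq a$ (equivalently $r\leq r^*(a)$) and $r\geq a$ (equivalently $p\geq p^*(a)$) are extracted from H\"ormander's translation-invariance lemma (Lemma~\ref{lem: Horm}) applied to $TT^*$ — translation invariance in $t$ gives $q\geq q'$, in $x$ gives $r\geq r'$, for $a=2$, and then the power invariance \eqref{power transform} gives general $a$. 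Your Knapp sketch would plausibly recover the scaling identities, but it is far from clear that a tube example by itself pins down the corner inequalities; the paper's route is both more elementary and actually written down.

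The gap is the subcase $n>1$ with $\trip$ non-endpoint but $\tript$ endpoint, which the theorem explicitly includes (see the remark directly after the statement). In your composition through $L^a_{x,v}$, boundedness of $T^*:\spaceLqrptd\to\spaceLa{a}$ is, by duality, exactly the homogeneous estimate for the triplet $\tript$ with data in $\spaceLa{a'}$. When $\tript$ is endpoint one has $\tilde q=\HM(\tilde p,\tilde r)=a'$, so this is precisely the open endpoint homogeneous estimate; the unretarded bound you would feed into Christ--Kiselev is not available by composition. (Note $q>\tilde q'$ still holds here, so Christ--Kiselev itself is not the obstruction — the missing input is.) The paper routes this subcase through Theorem~\ref{thm: glob inhom est KT}, i.e.\ the Whitney dyadic decomposition of the bilinear form $B$ and the scaling estimate \eqref{BilStrichQ}, observing that $q>\tilde q'$ makes the resulting inhomogeneous estimate non-endpoint. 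You would need to invoke that machinery or provide a substitute for this subcase.
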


Note that Theorem \ref{thm: Strich admiss KT} allows the second triplet $\tript$ to be endpoint and excludes only the estimates where the first triplet $\trip$ is endpoint. Below we employ the notation $\spaceLqrpf$ for the Lebesgue space $L^q(\R; L^r(\R^n; L^p(V)))$ (or $L^q((0,\infty); L^r(\R^n; L^p(V)))$)  over a finite velocity domain $V \subset \R^n$.

\begin{thm}[Generalized inhomogeneous estimates] \label{thm: glob inhom est KT}
Suppose that $\trip$ and $\tript$ are two jointly KT-acceptable exponent triplets that further satisfy the following conditions
\begin{enumerate}
\renewcommand{\labelenumi}{(\roman{enumi})}
  \item \label{enum: a}  $1 < q,\tilde q < \infty$, $q > \tilde q'$, then the estimate
\begin{equation}\label{eq: Strich inhom KT}
\normQRP{W(t)F}{q}{r}{p} \lesssim \normQRP{F}{\tilde q'}{\tilde r'}{\tilde p'}
\end{equation}
holds for all $F \in \spaceLqrptd$.
  \item $\tilde q = \infty$, $1<q<\infty$, then the estimate
  \begin{align}\label{eq: Strich inhom a}
\normQRP{W(t)F}{q, \infty}{r}{p} &\lesssim \normQRP{F}{1}{\tilde r'}{\tilde p'},\\
\big( \normQRP{W(t)F}{q}{r}{p}  &\lesssim \normQRP{F}{\tilde q'}{\tilde r'}{\tilde p'},\quad q \geq \tilde p'\big) \notag
\end{align}
holds for all $F \in \SpaceLqrp{1}{\tilde r'}{\tilde p'}$.
  \item $q = \infty$, $1<\tilde q<\infty$, then the estimate
  \begin{align}\label{eq: Strich inhom b}
  \normQRP{W(t)F}{\infty}{r}{p} &\lesssim \normQRP{F}{\tilde q', 1}{\tilde r'}{\tilde p'},\\
\big( \normQRP{W(t)F}{q}{r}{p} &\lesssim \normQRP{F}{\tilde q'}{\tilde r'}{\tilde p'},\quad  \tilde q' \leq p \big) \notag
\end{align}
holds for all $F \in \SpaceLqrp{\tilde q', 1}{\tilde r'}{\tilde p'}$ $(F \in \SpaceLqrp{\tilde q'}{\tilde r'}{\tilde p'})$.
  \item \label{enum: d} $1< q,\tilde q < \infty$, $q = \tilde q'$.  Under the assumption of a finite velocity space $V \subset \R^n$ we have that the estimate 
\begin{align} \label{eq: Strich inhom endp}
\normQRPf{W(t)F}{q}{r,q}{P} &\lesssim_V \normQRPfd{F}{q}{r', \tilde q}{P}\\
(\normQRPf{W(t)F}{q}{r}{P}  &\lesssim_V \normQRPfd{F}{q}{r}{P}, \quad q\leq r \text{ and } \tilde q\leq \tilde r \notag)
\end{align}
holds for all $F \in \spaceLqrpftd$, whenever $P, \tilde P$ are such that $1\leq P < p$ and $1 \leq \tilde P < \tilde p$.
\end{enumerate}
Conversely, if estimate (\ref{eq: Strich inhom KT}) holds for all $F \in \spaceLqrpd$, then $\trip$ and $\tript$ must be two jointly KT-acceptable exponent triplets, apart from condition \eqref{cond: glob inhom KT} whose necessity is not fully verified.
\end{thm}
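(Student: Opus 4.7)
My plan is to proceed by duality. Each inequality \eqref{eq: Strich inhom KT}--\eqref{eq: Strich inhom endp} becomes, after testing against a function $G$ in a suitable predual of the left-hand space, a bilinear statement
\begin{equation*}
\left| \iint_{s<t} \langle U(t-s) F(s), G(t)\rangle \, ds\, dt \right| \lesssim \norm{F}_{L^{\tilde q'}_t L^{\tilde r'}_x L^{\tilde p'}_v} \norm{G}_{L^{q'}_t L^{r'}_x L^{p'}_v},
\end{equation*}
with Lorentz refinements in case (iv). The engine of the argument is the bilinear dispersive estimate
\begin{equation*}
|\langle U(t-s) F(s), G(t)\rangle| \lesssim |t-s|^{-\alpha} \normBC{F(s)}{\tilde r'}{\tilde p'} \normBC{G(t)}{r'}{p'},
\end{equation*}
with $\alpha = 1/q + 1/\tilde q$. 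I would derive it by bilinear interpolation from the Jacobian identity $\langle U(\tau) F, G\rangle = \int F(y,v)\,G(y+\tau v, v)\,dy\,dv$, which yields the endpoint bound $L^\infty_x L^1_v \to L^1_x L^\infty_v$ with decay $|\tau|^{-n}$, together with the trivial $L^2$-isometry. The scaling relation \eqref{cond: joint KT accept 1} fixes $\alpha$, and the harmonic-mean relation \eqref{cond: joint KT accept 2} is the compatibility condition under which the interpolation lands on the desired mixed-norm spaces; the conditions \eqref{cond: glob inhom KT} and the facets \eqref{cond: sigma1}, \eqref{cond: sigma2} mark the parameter ranges on which this bilinear dispersive estimate remains finite.

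Given the bilinear dispersive estimate, case (i) follows by applying Hardy--Littlewood--Sobolev in the time variable, with the value $\alpha = 1/q + 1/\tilde q$ being exactly the HLS exponent compatible with the dual time indices $\tilde q'$ and $q'$; this first produces a bilinear bound on the full time line. Since $q > \tilde q'$ strictly, the retarded restriction $\{s<t\}$ is then reinstated via the Christ--Kiselev lemma. Cases (ii) and (iii) are the natural degenerations where one of the time exponents is $\infty$: HLS reduces to a Young-type convolution with a weak-$L^{q,\infty}$ or $L^{\tilde q',1}$ endpoint, which Christ--Kiselev preserves but cannot strengthen to the Lebesgue scale. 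The constraints \eqref{cond: sigma1}, \eqref{cond: sigma2} carve out precisely the combinatorial facets on which the bilinear dispersive estimate is still bounded at these boundary exponents.

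Case (iv) is the critical diagonal $q = \tilde q'$, where HLS fails on the Lebesgue scale. On the finite velocity domain $V \subset \R^n$ I would use H\"older in $v$ to trade $p,\tilde p$ against strictly smaller exponents $P < p$, $\tilde P < \tilde p$, picking up a volume factor $|V|^{1/P-1/p+1/\tilde P-1/\tilde p}$; this moves the relevant time exponents off the critical line and reduces matters to case (i). The Lorentz $L^{r,q}_x$ refinement in \eqref{eq: Strich inhom endp} is then extracted by a Keel--Tao bilinear-interpolation argument applied to the family of bilinear dispersive estimates parametrised by the spatial mixed-norm indices. The main obstacle throughout is producing the bilinear dispersive estimate uniformly across the full jointly KT-acceptable range, particularly when $r = \infty$ or $\tilde r = \infty$: mixed-norm Riesz--Thorin interpolation is subtle here, and the sharpness of \eqref{cond: sigma1}, \eqref{cond: sigma2} stems from Besicovitch-type geometry of the free-transport characteristics, as documented in \cite{Ovch4}.

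For the converse direction, necessity of the scaling relation in \eqref{cond: joint KT accept 1} follows from the invariance $(t,x,v) \mapsto (\lambda t, \lambda x, v)$; necessity of the harmonic-mean relation \eqref{cond: joint KT accept 2} is seen by testing on tensor products $F(t,x,v) = \phi(t)\psi(x)\chi(v)$ of varying aspect ratios and matching homogeneities; and KT-acceptability of each of $\trip$, $\tript$ is forced by Knapp-type wave-packet concentrations adapted to the free transport flow. Sharpness of the subtler conditions \eqref{cond: sigma1}, \eqref{cond: sigma2} beyond what pure scaling delivers is the content of the Besicovitch-set counterexamples in \cite{Ovch4}.
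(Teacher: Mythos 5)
Your route differs structurally from the paper's, and two of the steps contain genuine gaps.

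The paper does not run Hardy--Littlewood--Sobolev plus Christ--Kiselev. It instead works with a Whitney dyadic decomposition of the region $\{s<t\}$: the bilinear form $B(F,G)=\iint_{s<t}\langle U(s)^*F,U(t)^*G\rangle\,ds\,dt$ is split into dyadic pieces $B_Q$, a local scaling estimate \eqref{BilStrichQ} is proved on each $Q$ (Lemma \ref{lem: local inhom estim KT}), and the sum is controlled by mapping into weighted sequence spaces $l^\infty_s$ and invoking bilinear real interpolation (Lemmas \ref{lem: interp T1}, \ref{lem: interp T}, \ref{lem: interp l}) in the Keel--Tao/Foschi style. Christ--Kiselev is not needed because Lemma \ref{lem: TT* KT} shows that, for free transport, the estimates for $W(t)$ and for the untruncated $TT^*$ operator are \emph{equivalent} (via the reflection $(t,x)\mapsto(-t,-x)$, which exchanges $B$ and $B'$). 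That observation is one of the novelties of the paper, and it means the retarded restriction is never an obstacle to be overcome.

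The more serious issue is the bilinear dispersive estimate you postulate,
$|\langle U(t-s)F(s),G(t)\rangle|\lesssim|t-s|^{-\alpha}\normBC{F(s)}{\tilde r'}{\tilde p'}\normBC{G(t)}{r'}{p'}$.
As a pointwise bound this forces the exponents on $F$ and $G$ to ``match up'': pairing $\langle U(\tau)f,g\rangle=\langle f,U(-\tau)g\rangle$ with the decay estimate \eqref{DispEst} yields a bound with $\normBC{g}{\tilde p}{\tilde r}$ (or $\normBC{f}{p}{r}$) on the right, so one needs $\tilde p=r'$, $\tilde r=p'$ (or the dual). That is a codimension-two slice of the jointly KT-acceptable set, not the full region cut out by \eqref{cond: joint KT accept 1}--\eqref{cond: glob inhom KT}. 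Interpolating against the $L^2$-isometry only widens this to a one- or two-parameter family and certainly does not reach the facets $\Sigma_1$, $\Sigma_2$ or the generic jointly KT-acceptable exponents with $\HM(p,r)=\HM(\tilde p',\tilde r')$ but $\tilde p\neq r'$. What the paper uses instead is a \emph{local in time} scaling estimate \eqref{eq: local scal KT}, obtained by Riesz--Thorin interpolation between the family of \emph{time-integrated} non-endpoint Strichartz bounds of Lemma \ref{lem: local estim 1 KT} and the decay bounds of Lemma \ref{lem: local estim 2 KT}, then widened by H\"older in time (Lemma \ref{lem: local estim 3 KT}). That interpolation lives at the level of $L^q_tL^r_xL^p_v$-norms over a Whitney square, which is strictly weaker than a pointwise kernel bound; this is exactly what makes the larger acceptable region reachable. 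An HLS argument cannot use those local Strichartz inputs directly.

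Your treatment of case (iv) is also incorrect as stated. Replacing $p,\tilde p$ by smaller $P<p$, $\tilde P<\tilde p$ on bounded $V$ does not ``move the time exponents off the critical line'': the scaling constraint \eqref{cond: joint KT accept 1} reads $1/q+1/\tilde q=n(1-1/r-1/\tilde r)$ and does not involve $p,\tilde p$, so $q=\tilde q'$ persists and one cannot pass to case (i). What the paper does in Section \ref{sect: endp glob inhom hypo KT} is keep $(q,\tilde q)$ fixed on the diagonal and perturb the \emph{spatial and velocity} exponents $(r,p,\tilde r,\tilde p)\mapsto(r_j,p_j,\tilde r_j,\tilde p_j)$ symmetrically (preserving the harmonic-mean constraint), run bilinear real interpolation in those variables using \eqref{eq: Lp interp}, and thereby produce the Lorentz refinement $L^{r,q}_x$ on the output. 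The bounded velocity domain is used solely to absorb the perturbed $v$-exponents via the embeddings $L^{\tilde P'}(V)\hookrightarrow L^{\tilde p_j'}(V)$, $L^{P'}(V)\hookrightarrow L^{p_j'}(V)$ for $P<p$, $\tilde P<\tilde p$, and it costs the loss of integrability $P<p$, not a reduction to a non-endpoint case.

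The converse (necessity) direction of your sketch is consistent with Section \ref{sect: counterex}, though the paper gets KT-acceptability of each triplet not from Knapp examples but from a test function $F=\chi(0\le t\le1,|x|\le1,|v|\le1)$ whose free evolution concentrates on the set $\{v\sim1/t,\,x\sim t\}$ for large $t$, together with H\"ormander's translation-invariance lemma to obtain $1/q+1/\tilde q\le1$ and $1/r+1/\tilde r\le1$.
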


\begin{rem}
As indicated, in the range $q \geq \tilde p'$ estimate \eqref{eq: Strich inhom a} can be strengthen by replacing the Lorentz norm $L^{q, \infty}$ by the Lebesgue norm $L^q$. Analogously, the Lorentz norm $L^{\tilde q', 1}$ in estimate \eqref{eq: Strich inhom b} can be replaced by the Lebesgue norm $L^{\tilde q'}$ in the range $\tilde q' \leq p$. This is proved in Lemma \ref{lem: inhom restr infty}. By the same token, in the range $q\leq r$ and $\tilde q\leq \tilde r$, estimate \eqref{eq: Strich inhom endp} implies its analogue in Lebesgue norms.
\end{rem}

\begin{rem}
If we restrict ourselves to finite time intervals $[0, T]$, we have the continuous embeddings
\begin{align*}
    L^{q,r}([0, T]) \hookrightarrow L^{p}([0, T]),   \quad q > p, \; 1\leq q,p,r\leq \infty,\\
    L^{p}([0, T])   \hookrightarrow L^{q,r}([0, T]), \quad p > q, \; 1\leq q,p,r\leq \infty,
\end{align*}
see \cite[p. 217]{BS}.
For example, let $(\infty, r,p)$ and $\tript$ be such that estimate \eqref{eq: Strich inhom b} holds and let $1\leq \widetilde Q < \tilde q$. Then we have the local inhomogeneous estimate
 \begin{align*}
      \normQRPloc{W(t)F}{\infty}{q}{r}{[0,T]} \lesssim_T \normQRPloc{F}{\widetilde Q'}{\tilde r'}{\tilde p'}{[0,T]}
\end{align*}
for any $0<T<\infty$ and any $F \in \spaceQRPlocd{Q}{r}{p}$.
\end{rem}

\begin{thm}[The Equivalence Theorem] \label{thm: equiv KT}
\mbox{}
\begin{enumerate}
\renewcommand{\labelenumi}{\textbf{\Alph{enumi}.}}
 \item The following three estimates
\begin{align}
  \normQRP{U(t)f}{q}{r}{p}    &\lesssim \normBC{f}{b}{c},   \qquad\qquad \forall f \in \spaceLbc,          \label{est: equiv hom}\\
  \normQRP{W(t)F}{q}{r}{p}    &\lesssim \normQRP{F}{1}{b}{c}, \quad \;\;\forall F \in \SpaceLqrp{1}{b}{c},        \label{est: equiv inhom 1}\\
  \normQRP{W(t)F}{\infty}{b'}{c'} &\lesssim \normQRP{F}{q'}{r'}{p'},   \quad \forall F \in \SpaceLqrp{q'}{r'}{p'}.      \label{est: equiv inhom 2}
\end{align}
are equivalent whenever $1\leq q,r,p, b,c \leq \infty$.
 \item Whenever $b=c=2$  estimate \eqref{est: equiv hom} is equivalent to
\begin{align}
  \normQRP{W(t)F}{q}{r}{r'}       &\lesssim \normQRP{F}{q'}{r'}{r}, \quad  \forall F \in \SpaceLqrp{q'}{r'}{r}.         \label{est: equiv inhom qq'}
\end{align}
\end{enumerate}
\end{thm}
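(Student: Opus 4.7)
The plan for Part A is to close the cycle \eqref{est: equiv hom} $\Rightarrow$ \eqref{est: equiv inhom 1} $\Leftrightarrow$ \eqref{est: equiv inhom 2} $\Rightarrow$ \eqref{est: equiv hom}. For \eqref{est: equiv hom} $\Rightarrow$ \eqref{est: equiv inhom 1}, I would apply Minkowski's integral inequality,
\[
 \norm{W(t)F}_{L^q_t L^r_x L^p_v} \leq \int_\R \norm{U(t-s)F(s)\chi_{s<t}}_{L^q_t L^r_x L^p_v}\, ds,
\]
and use the time-translation invariance of the $L^q_t$-norm to rewrite each slice as $\norm{U(\cdot)F(s)}_{L^q_t L^r L^p}$; applying \eqref{est: equiv hom} pointwise in $s$ and integrating in $s$ gives the $L^1_t L^b_x L^c_v$ bound on the right.

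For \eqref{est: equiv inhom 1} $\Leftrightarrow$ \eqref{est: equiv inhom 2} I would introduce the time-and-velocity reversal $RF(t,x,v) := F(-t, x, -v)$, which is an isometry on every mixed Lebesgue space in sight. A direct change of variables in the defining integrals shows that $R$ conjugates $W$ to its formal $L^2$-adjoint $W^*G(s):=\int_s^\infty U(s-t)G(t)\,dt$, namely $RW = W^*R$. The Banach-space dual of \eqref{est: equiv inhom 1} is exactly the bound $W^*:L^{q'}L^{r'}L^{p'}\to L^\infty L^{b'}L^{c'}$, which under the $R$-intertwining is equivalent to the same bound with $W$ in place of $W^*$; the latter is \eqref{est: equiv inhom 2}. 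The reverse direction runs the same argument backwards.

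For \eqref{est: equiv inhom 1} $\Rightarrow$ \eqref{est: equiv hom} I would use an approximate-identity argument in time. Given $f \in L^b_x L^c_v$, let $\phi_\epsilon \geq 0$ be supported in $[0,\epsilon]$ with $\int \phi_\epsilon = 1$, and put $F_\epsilon(s,x,v) := \phi_\epsilon(s) f(x,v)$, so that $\norm{F_\epsilon}_{L^1L^bL^c}=\norm{f}_{L^bL^c}$. For $t>\epsilon$ the function $W(t)F_\epsilon$ is a convex combination of $U(t-s)f$, $s\in[0,\epsilon]$, which converges to $U(t)f$ in $L^r_x L^p_v$ by strong continuity of the flow. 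Fatou's lemma then passes the uniform bound from \eqref{est: equiv inhom 1} to $\norm{U(\cdot)f}_{L^q_t(\R_+;L^rL^p)}\lesssim \norm{f}_{L^bL^c}$, and the backward half is recovered by using $\phi_\epsilon$ supported in $[-\epsilon,0]$ or by composing with $R$.

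Part B is the Hilbert-space $TT^*$ argument. Setting $Tf(t):=U(t)f$, the harmonic-mean condition forces $p=r'$ when $b=c=2$, so $T:L^2(\R^{2n})\to L^qL^rL^{r'}$ and the identity $\|T\|^2=\|TT^*\|$ makes \eqref{est: equiv hom} equivalent to boundedness of the full convolution $TT^*G=\int_\R U(t-s)G(s)\,ds$ from $L^{q'}L^{r'}L^r$ into $L^qL^rL^{r'}$. Writing $TT^*=W+W^*$ and using the intertwining $W^*=RWR$ from Part A, \eqref{est: equiv inhom qq'} immediately implies the full bound on $TT^*$, and hence \eqref{est: equiv hom}. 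The main obstacle is the converse: to extract the retarded part $W$ from $TT^*$ I would invoke the Christ-Kiselev lemma, which gives the required bound whenever $q>q'=2$. The Keel-Tao endpoint $q=2$ is precisely where this argument breaks down; in the KT setting it is known to fail in one spatial dimension and remains unresolved in higher dimensions, in agreement with the exclusion in Theorem~\ref{thm: Strich admiss KT}.
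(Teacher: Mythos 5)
Your Part A is essentially correct and follows the same contour as the paper's proof. The implication \eqref{est: equiv hom} $\Rightarrow$ \eqref{est: equiv inhom 1} by Minkowski, the approximate-identity argument for the converse, and the reversal $R$ with the intertwining $RW=W^*R$ (the paper does the same thing, with $F'(t,x,v)=F(-t,-x,v)$ in place of $F(-t,x,-v)$) are all in the right spirit. One small caution: ``strong continuity of the flow'' on $L^r_xL^p_v$ fails when $r=\infty$ or $p=\infty$, so the paper avoids it by testing against a fixed $G$ and using continuity of the scalar function $s\mapsto\langle f,U(-s)G\rangle$ together with Fatou; you would need a comparable workaround at the endpoints.

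Part B, however, contains a genuine gap. You write that to extract the retarded piece $W$ from $TT^*=\int_{\R}U(t-s)F(s)\,ds$ one must invoke the Christ--Kiselev lemma, which requires $q>q'$, and that the case $q=2$ is therefore ``precisely where this argument breaks down''. This is incorrect: the paper's key observation (and the reason the Christ--Kiselev step is \emph{not needed} in the KT setting, as explicitly emphasized in the introduction and proved as part (i) of Lemma \ref{lem: TT* KT}) is that the kinetic transport propagator $U(t)$ has a nonnegative kernel, so one has the pointwise domination
\begin{equation*}
   \abs{W(t)F(x,v)} \;=\; \Bigl|\int_{-\infty}^t F\bigl(s,\,x-(t-s)v,\,v\bigr)\,ds\Bigr| \;\le\; \int_{\R} \abs{F}\bigl(s,\,x-(t-s)v,\,v\bigr)\,ds \;=\; TT^*\abs{F}(t,x,v).
\end{equation*}
Hence boundedness of $TT^*$ in \emph{any} mixed Lebesgue norm immediately transfers to $W$, for every exponent including $q=q'=2$. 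Thus the equivalence in Part B holds with no exclusion at the endpoint. Your closing remark also conflates two distinct issues: the Equivalence Theorem asserts only that \eqref{est: equiv hom} and \eqref{est: equiv inhom qq'} stand or fall together; the separate fact that the endpoint homogeneous estimate \eqref{est: endpoint KT} fails in $n=1$ (and is open for $n>1$) means both sides of that equivalence are false (respectively, of unknown truth value) at $q=2$, not that the equivalence itself breaks down.
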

As a direct consequence of Theorem \ref{thm: glob inhom est KT} and the Equivalence Theorem we obtain

\begin{cor}
We have the estimate
\begin{equation}\label{eq: Strich hom mix KT infty}
\normQRP{U(t)f}{q,\infty}{r}{p} \lesssim \normBC{f}{b}{c}
\end{equation}
for all $f \in L^b_xL^c_v$ whenever the exponent 5-vector $\qtet$ satisfies the following conditions
\begin{align}
 \rp q + \frac n r = \frac n b,  &\quad \HM(p,r)=\HM(b,c) \stackrel{def}{=} a, \label{est: gen hom KT 1}\\
  p < &b \leq a \leq c < r, \label{est: gen hom KT 2}\\
  a& \leq r < r^*(c), \label{est: gen hom KT 3}
\end{align}
in the range $1 < q < \infty$, $1 \leq p, \tilde p, r, \tilde r < \infty$. Estimate \eqref{eq: Strich hom mix KT infty} also holds whenever
\begin{equation} \label{est: gen hom KT 4}
 b=c=p=r, \quad q=\infty, \quad \text{(the transport estimate),}
\end{equation}
and whenever $b=p$, $c=r$, and $1/q=n/p-n/r$ (an immediate consequence of the decay estimate \eqref{DispEst}).
\end{cor}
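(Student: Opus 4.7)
The plan is to derive \eqref{eq: Strich hom mix KT infty} by applying Theorem~\ref{thm: glob inhom est KT}(ii) to a suitable pair of triplets and then transferring the resulting inhomogeneous estimate to the homogeneous one via Theorem~\ref{thm: equiv KT}(A). I take $\tript = (\infty, b', c')$ as the companion triplet to $\trip$, so that $\tilde r' = b$, $\tilde p' = c$, and the right-hand side of the inhomogeneous estimate will reproduce $\normBC{f}{b}{c}$ after the equivalence is invoked.

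The first main step is to verify joint KT-acceptability of $\trip$ and $(\infty, b', c')$ in the sense of Definition~\ref{dfn: joint accept}. The scaling identity \eqref{cond: joint KT accept 1} becomes $1/q = n(1 - 1/r - 1/b') = n/b - n/r$, which is precisely \eqref{est: gen hom KT 1}, while $1/q + 0 \leq 1$ follows from $q > 1$. The harmonic-mean identity \eqref{cond: joint KT accept 2} reads $\HM(p,r) = \HM(c,b)$, which is in the hypothesis. KT-acceptability of $\trip$ reduces to $1/q < n(1/p - 1/r)$, which after substituting the scaling identity becomes $p < b$ and is therefore in the hypothesis; KT-acceptability of $(\infty, b', c')$ reduces to $b \leq c$, with the equality case $b = c$ covered by the degenerate branch $q = \infty$, $p = r$ of Definition~\ref{dfn: accept}. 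Finally, the extra condition \eqref{cond: glob inhom KT}: the inequality $(n-1)/p' < n/b'$ rearranges to $n(1/b - 1/p) < 1/p'$, which is automatic since $p < b$; and $(n-1)/c < n/r$ is equivalent to $r < nc/(n-1)$, which follows from $r < r^*(c)$ since $r^*(c) \leq nc/(n-1)$ in both branches of \eqref{cond: r* KT}.

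Applying Theorem~\ref{thm: glob inhom est KT}(ii) yields
\[
\normQRP{W(t)F}{q,\infty}{r}{p} \lesssim \normQRP{F}{1}{b}{c}
\]
for all $F \in L^1_tL^b_xL^c_v$. To obtain \eqref{eq: Strich hom mix KT infty} I invoke the equivalence in Theorem~\ref{thm: equiv KT}(A) in the version where the time Lebesgue norm $L^q_t$ is replaced by the Lorentz norm $L^{q,\infty}_t$. This is the one mildly delicate point of the argument, but both directions of the equivalence go through unchanged: the forward direction is Minkowski's integral inequality applied to $W(t)F = \int U(t-s) F(s) \, ds$ together with translation invariance (valid in any Banach function space, including $L^{q,\infty}$ for $q > 1$), while the direction I actually need is obtained by testing the inhomogeneous estimate against $F_\epsilon(s,x,v) = \phi_\epsilon(s) f(x,v)$ for a mollifier $\phi_\epsilon$ of $\delta_0$ and passing to the limit using Fatou's lemma in $L^{q,\infty}$.

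The two remaining cases are immediate. The transport estimate \eqref{est: gen hom KT 4} is the pointwise identity $\normBC{U(t)f}{r}{r} = \normBC{f}{r}{r}$, valid for every $t$ by translation invariance in $x$. The case $b = p$, $c = r$, $1/q = n/p - n/r$ follows from the classical dispersive estimate $\normBC{U(t)f}{r}{p} \lesssim |t|^{-1/q} \normBC{f}{p}{r}$ together with the observation that $t \mapsto |t|^{-1/q}$ lies in $L^{q,\infty}(\R)$.
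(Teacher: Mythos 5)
Your proof is correct and follows essentially the same route as the paper: the paper states the Corollary as a direct consequence of Theorem~\ref{thm: glob inhom est KT}(ii) and the Equivalence Theorem, and the detailed verification of the exponent conditions (with $b = \tilde r'$, $c = \tilde p'$) is the content of Lemma~\ref{lem: hom bc}, which your step-by-step check of joint KT-acceptability reproduces. The Lorentz-in-time extension of the Equivalence Theorem that you argue directly via mollified deltas and Fatou's lemma is precisely what the paper delegates to Remark~\ref{rem: Equiv thm KT}, so the two proofs are substantively the same.
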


\begin{thm}[Generalized homogeneous estimates] \label{thm: Strich admiss mix KT}
We have the estimate
\begin{equation}\label{eq: Strich hom mix KT}
\normQRP{U(t)f}{q}{r}{p} \lesssim \normBC{f}{b}{c}
\end{equation}
for all $f \in L^b_xL^c_v$ whenever $q \geq c$ and the exponent 5-vector $\qtet$ satisfies \eqref{est: gen hom KT 1} -- \eqref{est: gen hom KT 3} in the range $0 < q,r,p,b,c < \infty$ or condition \eqref{est: gen hom KT 4} in the range $0 < b \leq \infty$. Conversely, if estimate (\ref{eq: Strich hom mix KT}) holds for all $f \in L^b_xL^c_v$ then $\qtet$ must satisfy conditions \eqref{est: gen hom KT 1} and \eqref{est: gen hom KT 2} in the range $1 \leq q < \infty$, $1 \leq p, \tilde p, r, \tilde r < \infty$ or condition \eqref{est: gen hom KT 4} in the range $0 < b \leq \infty$.
\end{thm}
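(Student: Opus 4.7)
The plan is to reduce the claim to the inhomogeneous setting via the Equivalence Theorem \ref{thm: equiv KT} and then invoke Theorem \ref{thm: glob inhom est KT}. Concretely, by estimate \eqref{est: equiv inhom 1} of part A, the homogeneous estimate \eqref{eq: Strich hom mix KT} with exponents $(q,r,p,b,c)$ is equivalent to $\normQRP{W(t)F}{q}{r}{p} \lesssim \normQRP{F}{1}{b}{c}$, which is exactly \eqref{eq: Strich inhom a} of case (ii) of Theorem \ref{thm: glob inhom est KT} with the auxiliary triplet $(\tilde q, \tilde r, \tilde p) = (\infty, b', c')$. The whole argument therefore hinges on matching our hypotheses to the jointly KT-acceptability of the pair $(q,r,p)$, $(\infty, b', c')$ and to the Lebesgue-norm strengthening clause in case (ii).

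For the forward direction (outside of \eqref{est: gen hom KT 4}) the unpacking runs as follows: the scaling and harmonic-mean relations \eqref{cond: joint KT accept 1}--\eqref{cond: joint KT accept 2} specialise to $1/q = n(1/b - 1/r)$ and $\HM(p,r) = \HM(b,c)$, i.e.\ \eqref{est: gen hom KT 1}; KT-acceptability of $(q,r,p)$ is equivalent to $p < b$, while KT-acceptability of $(\infty, b', c')$ is equivalent to $b < c$ (or the degenerate $b = c$ paired with $p < r$), which together with the HM identity yield the chain $p < b \leq a \leq c < r$ of \eqref{est: gen hom KT 2}. The extra restriction \eqref{cond: glob inhom KT} specialises to the pair $(n-1)/p' < n/b'$ and $(n-1)/c < n/r$; the second is $r < nc/(n-1) = r^*(c)$ in the regime $c \geq (n+1)/n$, matching \eqref{est: gen hom KT 3}, while the first is checked to follow automatically from the remaining constraints. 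Degenerate end-points ($r = \infty$ or $b = 1$) are matched against the points $B$, $C$ in \eqref{cond: sigma1}--\eqref{cond: sigma2}. With joint KT-acceptability in place, case (ii) of Theorem \ref{thm: glob inhom est KT} delivers the Lorentz version of \eqref{eq: Strich inhom a}, and its parenthetical strengthening to $L^q_t$ applies precisely when $q \geq \tilde p' = c$, our standing assumption; pulling back through the Equivalence Theorem returns \eqref{eq: Strich hom mix KT}. The transport case \eqref{est: gen hom KT 4} is handled directly by the measure-preserving change $x \mapsto x + tv$, giving $\normQRP{U(t)f}{\infty}{b}{b} = \normBC{f}{b}{b}$ for every $0 < b \leq \infty$.

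For the converse, assume \eqref{eq: Strich hom mix KT} holds. The Equivalence Theorem produces the corresponding inhomogeneous estimate \eqref{est: equiv inhom 1}, and the converse part of Theorem \ref{thm: glob inhom est KT} forces $(q,r,p)$ and $(\infty, b', c')$ to be jointly KT-acceptable, apart from \eqref{cond: glob inhom KT}, which is explicitly excluded from that converse. Unpacking exactly as above recovers \eqref{est: gen hom KT 1} and \eqref{est: gen hom KT 2}, but not \eqref{est: gen hom KT 3}, consistent with the statement. The main obstacle is the careful bookkeeping of the auxiliary restrictions in the forward direction: one must verify that the scalar condition \eqref{est: gen hom KT 3} encodes the active half of \eqref{cond: glob inhom KT} for the specific triplet $(\infty, b', c')$ in the regime $c \geq (n+1)/n$, derive the other half from the remaining hypotheses, and separately match the degenerate end-points via \eqref{cond: sigma1}--\eqref{cond: sigma2}.
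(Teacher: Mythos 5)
Your overall plan---reduce \eqref{eq: Strich hom mix KT} to the inhomogeneous estimate $\normQRP{W(t)F}{q}{r}{p} \lesssim \normQRP{F}{1}{b}{c}$ via part A of the Equivalence Theorem, identify $(\tilde q, \tilde r, \tilde p) = (\infty, b', c')$, and unwind joint KT-acceptability into conditions \eqref{est: gen hom KT 1}--\eqref{est: gen hom KT 3}---is faithful to the paper's strategy, and the bookkeeping you indicate for the exponent chain $p < b \leq a \leq c < r$ and for $r < r^*(c)$ is on target. The converse direction is likewise in order.

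However, there is a genuine circularity at the step where you upgrade the Lorentz estimate to the Lebesgue estimate. You invoke the parenthetical strengthening in Theorem~\ref{thm: glob inhom est KT}(ii), namely that for $q\geq \tilde p'$ the $L^{q,\infty}_t$ norm can be replaced by $L^q_t$. But in the paper that strengthening is Lemma~\ref{lem: inhom restr infty}, and its stated proof reads ``follows directly from Lemma~\ref{lem: hom bc}, Corollary~\ref{cor: hom bc}, and the Equivalence Theorem,'' where Corollary~\ref{cor: hom bc} \emph{is} the substantive content of Theorem~\ref{thm: Strich admiss mix KT} (the $b\neq c$ case). So you are deducing the theorem from a result that the paper itself derives from the theorem. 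The missing ingredient is the interpolation argument of Corollary~\ref{cor: hom bc}: starting from two instances of the Lorentz estimate \eqref{eq: hom bc inf} with exponents perturbed as $(q_1,b_1,c_1)$ and $(q_2,b_2,c_2)$ (with $r,p$ held fixed, and the perturbations chosen to stay inside the range of Lemma~\ref{lem: hom bc}), one interpolates with the real method at parameters $(\theta,q)=(1/2,c)$. The left side gives $L^{q,c}_tL^r_xL^p_v$ by Lemma~\ref{prop: real interp 2}; the right side gives $L^b_xL^c_v$ via Lemma~\ref{prop: real interp 1} (this is where $b\leq c$ from \eqref{est: gen hom KT 2} is used); finally $L^{q,c}\hookrightarrow L^q$ requires $c\leq q$, which is exactly your standing hypothesis $q\geq c$. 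Without spelling out this interpolation step---or at least citing Lemma~\ref{lem: hom bc} and carrying out the upgrade directly rather than through the already-downstream Lemma~\ref{lem: inhom restr infty}---the proof is not self-contained.
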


The necessity of condition \eqref{est: gen hom KT 3} remains open in parallel to that of condition \eqref{cond: glob inhom KT} in the setting of the inhomogeneous estimates. Therefore, we cannot exclude the possibility of the existence of more estimates of the form \eqref{eq: Strich hom mix KT} in dimensions $n>1$ in the case of $b \neq c$.

The Equivalence Theorem, part B, together with Theorem \ref{thm: glob inhom est KT}, part (iv), imply the following weaker substitute for the endpoint homogeneous \Str estimate over finite velocity spaces.
\begin{cor} \label{cor: Strich hom endp}
 Let $1\leq P < p^*(a)$,  $(n+1)/n \leq a < \infty$, and let $V \subset \R^n$ be bounded. Then, the following estimate
\begin{align} \label{eq: Strich hom endp}
\normQRPf{U(t)f}{a}{r^*(a)}{P} \lesssim_V \normLa{f}{a},
\end{align}
holds for all $f \in \spaceLa{a}$.
\end{cor}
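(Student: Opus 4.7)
The plan is to derive \eqref{eq: Strich hom endp} from a corresponding inhomogeneous estimate over the bounded velocity set $V$ furnished by Theorem \ref{thm: glob inhom est KT} part (iv), using the Equivalence Theorem to convert between the inhomogeneous $W(t)$-form and the homogeneous $U(t)$-form.

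First, I identify a jointly KT-acceptable pair of triplets. Take $\trip=(a, r^*(a), p^*(a))$ at the KT-endpoint, and determine its partner $\tript$ by the joint KT-acceptability constraints and the symmetry $q=\tilde q'=a$. A short calculation yields $\tilde q=a'$, $\tilde r=na/((n-1)(a-1))$, and $\tilde p=na/((n+1)(a-1))$; the restriction \eqref{cond: glob inhom KT} of Definition \ref{dfn: joint accept} is verified by direct substitution.

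Second, apply Theorem \ref{thm: glob inhom est KT} part (iv) in its Lebesgue form (valid because $q=a\leq r^*(a)$ and $\tilde q=a'\leq \tilde r$) to obtain, for every $1\leq P<p^*(a)$ and $1\leq \tilde P<\tilde p$,
\[
\normQRPf{W(t)F}{a}{r^*(a)}{P}\lesssim_V \normQRPf{F}{a'}{(r^*(a))'}{\tilde{P}}.
\]

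Finally, the Equivalence Theorem converts this $W(t)$-estimate into the target $U(t)$-estimate: Part~B applies directly when $a=2$ (where the two triplets coincide and the structure is $TT^*$-symmetric), while Part~A with $b=c=a$ handles the general $a$. The main obstacle is that the output of part~(iv) does not literally match the inhom~$1$ or inhom~$2$ forms of Part~A; the required reconciliation is effected using the bounded measure of $V$ through the embeddings $L^{q_2}(V)\hookrightarrow L^{q_1}(V)$ for $q_1\leq q_2$, which absorb the $v$-exponent mismatch at the cost of a $|V|$-dependent constant. In the range $a\geq n+1$, where $\tilde p\leq 1$ renders the above pair unusable, one must additionally interpolate with simpler bounds such as the trivial transport estimate $\normQRPf{U(t)f}{\infty}{\infty}{P}\lesssim_V\norm{f}_{L^{\infty}_{x,v}}$ to cover the full stated range of~$a$.
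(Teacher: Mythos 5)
Your overall strategy — feed Theorem~\ref{thm: glob inhom est KT}(iv) into the Equivalence Theorem — is the right instinct, and you correctly identify the partner triplet $\tript$ for the endpoint $(a,r^*(a),p^*(a))$ with $q=\tilde q'=a$. However, there is a genuine gap in the conversion step, and it is exactly where the paper's argument takes a different and essential turn.

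The problem is this: Part~A of the Equivalence Theorem relates the target homogeneous estimate with data exponent $(b,c)=(a,a)$ to two very specific inhomogeneous forms, one with $L^1_t$ on the right (so $\tilde q=\infty$) and one with $L^\infty_t$ on the left (so $q=\infty$). Part~(iv) of Theorem~\ref{thm: glob inhom est KT}, by contrast, yields an estimate with $L^a_t$ on \emph{both} sides (with $q=\tilde q'=a$, $1<a<\infty$), and moreover the $x$-exponent on the right is $\tilde r'=na/(n+a-1)$, which differs from $(r^*(a))'$ whenever $a\neq2$. The mismatch you are trying to reconcile is therefore primarily in the time and space exponents, not only the velocity exponent; the embeddings $L^{q_2}(V)\hookrightarrow L^{q_1}(V)$ for bounded $V$ touch only the $v$-norms and cannot change $L^a_t$ into $L^1_t$ or $L^\infty_t$, nor can they alter the $L^r_x$ norms over the unbounded $x$-space. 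For $a\neq2$ the two triplets are not dual to each other, so the $TT^*$ principle that underlies the Equivalence Theorem does not give you back the homogeneous $T$-estimate from the inhomogeneous $TT^*$-estimate — this implication is genuinely one-directional outside the self-dual $a=2$ case. The same obstruction lies behind the fact that the unresolved endpoint homogeneous estimate cannot be read off from the valid non-endpoint inhomogeneous estimates.

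The paper's route avoids this entirely: apply Part~(iv) at the single self-dual point $a=2$, where $\trip=\tript=(2,r^*(2),p^*(2))$ and the $TT^*$ structure is symmetric, so Part~B of the Equivalence Theorem hands you $\normQRPf{U(t)f}{2}{r^*(2)}{P}\lesssim_V\normA{f}{2}$ for $1\le P<p^*(2)$. Then invoke the power invariance~\eqref{power transform}: replacing $f$ by $f^{\alpha}$ rescales the exponent vector by $\alpha$, and with $\alpha=a/2$ this transports the $a=2$ estimate to the estimate for general $a$, with $r^*(2)\cdot a/2=r^*(a)$ and $P\cdot a/2$ sweeping the interval $[a/2,p^*(a))$; the $V$-embedding then fills in the remaining $P\in[1,a/2)$ when $a>2$. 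This also dispenses with your separate case distinction at $a\ge n+1$, where your chosen $\tript$ has $\tilde p\le 1$ and Part~(iv) simply cannot be invoked; the power transform handles all $(n+1)/n\le a<\infty$ in one stroke. The missing idea in your proposal is the power transform~\eqref{power transform}; without it, neither Part~A nor the $V$-embeddings suffices to bridge $a\neq2$.
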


\section{General introductory remarks}

We owe the reader an explanation why our results do not follow from earlier works on \Str estimates. As it is well-known these estimates follow from two main ingredients, the decay and the energy estimates. Besides these, in the context of the KT equation, it is also necessary to assume a further structure condition that greatly increases the range of estimates we may prove. Consider the decay estimate
\begin{equation} \label{eq: disp est 1}
  \norm{U(t)f}_{L^{\infty}_xL^{1}_v} \lesssim \frac 1 {\abs{t}^{n}}\norm{f}_{L^{1}_xL^{\infty}_v}, \quad \forall t \in \R,
\end{equation}
for the KT equation. Note that the mixed Lebesgue norm in \eqref{eq: disp est 1} creates difficulties in interpolation if one uses the real method. Moreover, the general results of Keel and Tao \cite{KT} and Taggart \cite{Tg} are based on the real method and if applied to the present context produce \Str estimates in non-Lebesgue norms. Additional complication arises from the fact that the KT propagator $U(t)$ preserves a whole family of Lebesgue norms
\begin{equation} \label{eq: tran est 1}
  \normLa{U(t)f}{a} = \normLa{f}{a}, \quad \forall t \in \R, \; 0 < a \leq\infty,
\end{equation}
and not just the $L^2$-norm (corresponding to the energy estimate in other contexts). To mark the different nature of estimate \eqref{eq: tran est 1} we shall call it the transport estimate and any class $\spaceLa{a}$ for $1\leq a \leq \infty$ we shall call a transport class. The transport estimate is a consequence of the special case $a=2$ in \eqref{eq: tran est 1} and the following invariance of the homogeneous KT equation
\begin{equation}\label{power transform}
f \rightarrow f^\alpha, \quad U(t)f \rightarrow (U(t)f)^\alpha, \quad 0<\alpha<\infty.
\end{equation}
Furthermore, this invariance allows us to prove new homogeneous \Str estimates from already proven ones. In fact, the exponents in
\begin{align*}
  \normQRP{U(t)f}{q}{r}{p}  \lesssim \normBC{f}{b}{c},  \quad \forall f \in \spaceLbc,
\end{align*}
 transform according to the rule
\begin{equation} \label{correspondence alpha}
(q, r, p, b, c) \rightarrow (\alpha q, \alpha r, \alpha p, \alpha b, \alpha c)
\end{equation}
and any two estimates whose exponents are related in such a way are equivalent. Note that there is no such convenient tool in the inhomogeneous setting.

To summarize, the \Str estimates that we shall prove in the sequel are consequences of the decay estimate \eqref{eq: disp est 1}, the transport estimate \eqref{eq: tran est 1}, and the structural assumption \eqref{power transform}.

We would like next to highlight some special advances that we make in the present work. Most of all, we study the equivalence between different types of \Str estimates. One such result is the fact that in the context of the KT equation the \Str estimates for the operator $W(t)$ and that of $TT^*$,
\[
TT^*F = \Int U(t-s) F(s) ds,
\]
are equivalent. This greatly simplifies the use of duality arguments and we do not any longer need the Christ-Kiselev lemma in order to deduce the inhomogeneous \Str estimates via the corresponding estimates for $TT^*$.

We also show that any homogeneous estimate has corresponding inhomogeneous estimates to which it is equivalent e.g.
\begin{align}
  \normQRP{U(t)f}{q}{r}{p}    &\lesssim \normBC{f}{b}{c},   \qquad\qquad \forall f \in \spaceLbc,        \label{est: equiv hom intro}\\
  \normQRP{W(t)F}{q}{r}{p}    &\lesssim \normQRP{F}{1}{b}{c} \quad \;\;\forall F \in \SpaceLqrp{1}{b}{c}.        \label{est: equiv inhom 1 intro}
\end{align}
are equivalent, and more generally see Theorem \ref{thm: equiv KT}.

One possible application of this equivalence is in the study of the range of estimates \eqref{est: equiv hom intro}. Since the proof of these estimates in the present context is an entirely new result, we shall give an example from the context of the \Sch equation. The estimate
\begin{align} \label{eq: Sch}
  \normQR{U_s(t)f}{q}{r}    \lesssim \normLx{f}{p},   \qquad\qquad \forall f \in \spaceLx{p},
\end{align}
where by $U_s(t)$  we have denoted the \Sch propagator, was investigated by T. Kato \cite{Ka} (1994) for $1<p \leq 2$. We obtain a larger range of such estimates in higher dimensions $n>2$, see our PhD Thesis \cite{Ovch3}. This improvement is essentially due to the fact that our approach benefits from the more recent advances introduced by Keel and Tao \cite{KT} and Foschi \cite{F} in the inhomogeneous setting and, of course, the implication of \eqref{est: equiv hom intro} by \eqref{est: equiv inhom 1 intro}.

The last special result to be considered here is concerned with the endpoint \Str estimates for the KT equation in higher dimensions as in Theorem \ref{thm: glob inhom est KT}, part (iv). We prove there a class of estimates with a loss of integrability that can be made arbitrary small compared to the original endpoint estimates. However, our estimates are given entirely in terms of Lebesgue norms which is useful in applications. Furthermore, we give a counterexample showing that there does not exist a family of perturbed local estimates in a ``full neighborhood'' around any given endpoint estimate. The existence of the latter is required by the methods of Keel and Tao \cite{KT} and Foschi \cite{F}, and hence why it has not been yet possible to resolve in the positive the endpoint estimates of the considered type.

The different cases in  Theorem \ref{thm: glob inhom est KT} can be visualized quite easily. Let us first remember that the Lebesgue space $L^p$ is best seen as a ``function" of $1/p$ rather than $p$ in the context of interpolation. Therefore, the range of validity of the estimate \eqref{eq: Strich inhom KT} corresponds to a region in $\R^6$ of points with coordinates $(1/q, 1/r, 1/p, 1/{\tilde q}, 1/{\tilde r}, 1/{\tilde p})$. The projection of that region over the $1/q$-$1/{\tilde q}$-plane is visualized in Figure \ref{fig: 1}.

\begin{figure}[htp]
\centering
 \resizebox{4.5cm}{!}{\includegraphics[viewport=7cm 19.5cm 13.5cm 25.5cm, clip]{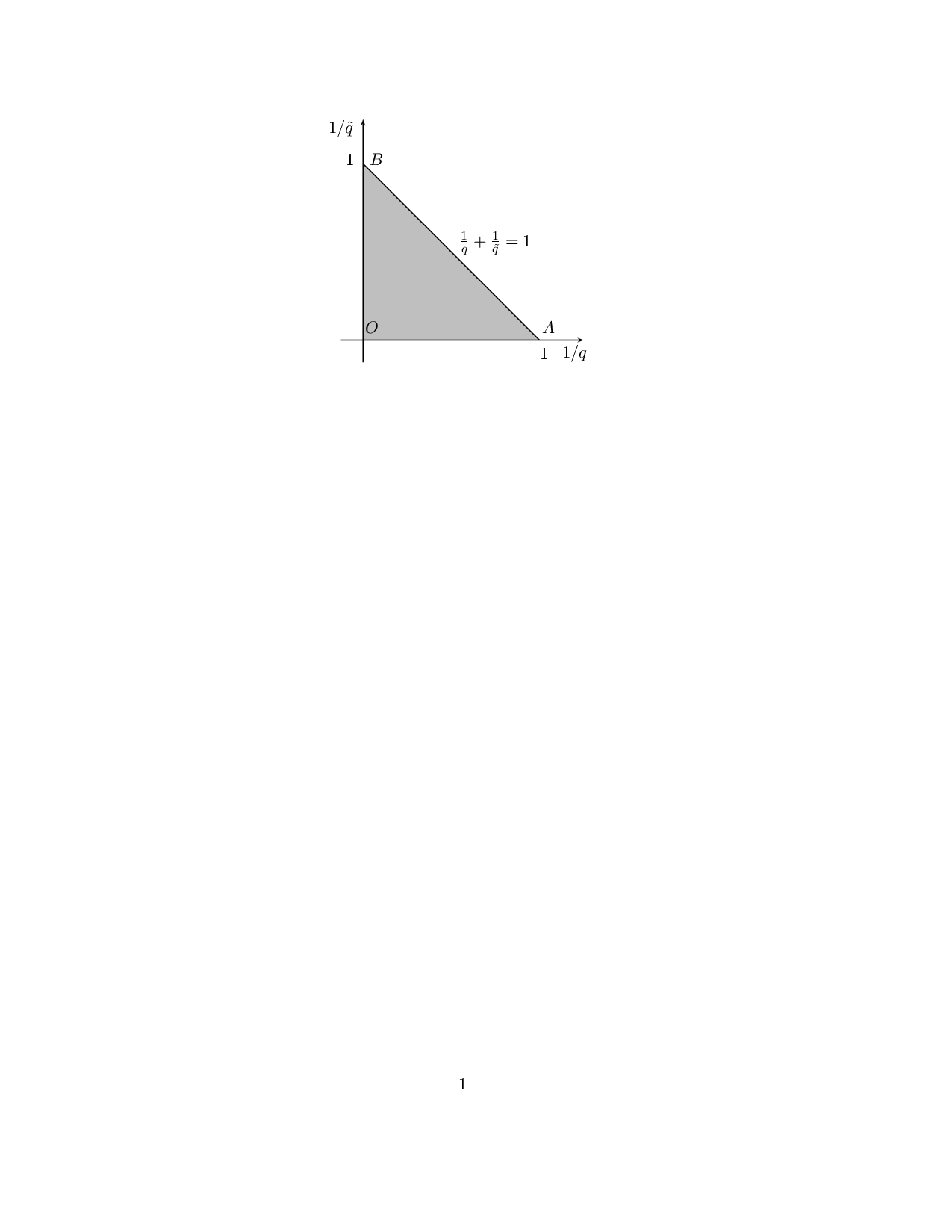}} 
\caption{Acceptable range of $(1/q, 1/{\tilde q)}$.}\label{fig: 1}
\end{figure}

The inner part of $\Delta OAB$ corresponds to the non-endpoint inhomogeneous estimates, while its three sides correspond to the endpoint inhomogeneous estimates. In the context of Theorem \ref{thm: glob inhom est KT}, the inner part of $\Delta OAB$ corresponds to part (i), the cathetus $OA$ - to part (ii), the cathetus $OB$ - to part (iii), and the hypotenuse $AB$ - to part (iv). The inhomogeneous estimates can be put into three groups in rising order of difficulty: the inner part of $\Delta OAB$, the two catheti $OA$ and $OB$, and the hypotenuse $AB$.

Note that in one spatial dimension condition \eqref{cond: glob inhom KT} is void  and thus there the complete range of validity of the \Str estimates for the KT equation is now known. In higher dimensions, however, the necessity (sharpness) of this condition is open. A similar question one encounters in other contexts, see e.g. Foschi \cite{F} in the context of the \Sch equation.

Before we end this section we remark that the estimates that we prove remain valid for more general domains than those considered in the definition of equation \eqref{eq: KT 1}. For example, the domain of $t$ may be any interval $I \subseteq \R$, and the domain of $v$ may be any measurable set $V \subseteq \R^n$. The claims follow from the fact that the transport and the dispersive estimate for the KT equation remain valid for these domains, as can one easily see by a simple modification of the proofs of Lemmas \ref{lem: decay est} and \ref{lem: tran est}.

The remainder of the paper is organized as follows. In the next section we give some auxiliary facts about the KT equation and in Section \ref{sect: duality} we present the $TT^*$ method and some other duality arguments including the proof of the Equivalence Theorem \ref{thm: equiv KT}. The proof of the \Str estimates for the Cauchy problem (Theorem \ref{thm: Strich admiss KT}) is given in Section \ref{sect: proof 1}. The local inhomogeneous \Str estimates are derived in Section \ref{sect: loc inhom est KT}. The generalized \Str estimates are proved in Section \ref{sect: proof glob inhom}. In Section \ref{sect: counterex} we  show sharpness of the estimates that we prove by means of counterexamples. We finish the paper with Section \ref{sect: que} where we list some still unanswered questions regarding the \Str estimates for the KT equation.

\section{Some properties of the kinetic transport equation} \label{sect: properties}
\begin{lem}[The dispersive estimate \cite{P}] \label{lem: decay est}
The kinetic transport evolution group $U(t)$ obeys the estimate
\begin{equation} \label{eq: disp est 2}
  \norm{U(t)f}_{L^{\infty}_xL^{1}_v} \leq \frac 1 {\abs{t}^{n}}\norm{f}_{L^{1}_xL^{\infty}_v},
 \end{equation}
for all $f \in {L^{1}_xL^{\infty}_v}$.
\end{lem}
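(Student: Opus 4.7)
The plan is to prove the estimate by a direct change of variables in the velocity integral, exploiting the explicit form $U(t)f(x,v) = f(x-tv,v)$.

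First, I would fix $t \neq 0$ and $x \in \mathbb{R}^n$ and compute the inner $L^1_v$ norm pointwise in $x$:
\[
\|U(t)f(x,\cdot)\|_{L^1_v} = \int_{\mathbb{R}^n} |f(x-tv,v)|\,dv.
\]
Then I would perform the substitution $y = x - tv$, which for each fixed $t\neq 0$ is a smooth diffeomorphism of $\mathbb{R}^n$ with $dv = |t|^{-n}\,dy$ and $v = (x-y)/t$. This turns the integral into
\[
\|U(t)f(x,\cdot)\|_{L^1_v} = |t|^{-n} \int_{\mathbb{R}^n} \bigl|f\bigl(y,\tfrac{x-y}{t}\bigr)\bigr|\,dy.
\]

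Second, I would estimate the integrand pointwise in $y$ by the essential supremum in the velocity variable,
\[
\bigl|f\bigl(y,\tfrac{x-y}{t}\bigr)\bigr| \leq \|f(y,\cdot)\|_{L^\infty_v},
\]
and integrate in $y$ to obtain $\|U(t)f(x,\cdot)\|_{L^1_v} \leq |t|^{-n}\|f\|_{L^1_x L^\infty_v}$. Taking the essential supremum over $x$ yields the claimed bound.

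There is no real obstacle here beyond checking measurability for the composition $y \mapsto f(y,(x-y)/t)$, which is standard for measurable $f$. The main point worth emphasizing in the write-up is that the factor $|t|^{-n}$ comes entirely from the Jacobian of the velocity-to-space change of variables, and that the mixed norm $L^1_x L^\infty_v$ on the right-hand side arises naturally because after the substitution the $L^\infty$ is taken in the slot that used to be the velocity variable, while the $L^1$ is taken in the new spatial variable $y$. This is exactly the standard Bardos--Degond style dispersive estimate attributed to Perthame in the excerpt.
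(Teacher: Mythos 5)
Your proof is correct and uses the same two ingredients as the paper's one-line argument (the pointwise bound $|f(a,b)|\leq\sup_{c}|f(a,c)|$ and the change of variables $y=x-tv$ with Jacobian $|t|^{-n}$), merely applying them in the opposite order; the paper takes the velocity supremum first and then substitutes, while you substitute first and then take the supremum. This is essentially the same proof, and no gap is present.
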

\begin{proof}
\begin{equation*}
 \begin{split}
  \int_{\R^n} \abs{U(t)f} dv = \int_{\R^n} \abs{f(x-tv,v)}dv \leq \int_{\R^n} \sup_{y \in \R^n} \abs{f(x-tv,y)} dv \\
  \leq \rp {\abs{t}^n} \int_{\R^n} \sup_{y \in \R^n} \abs{f(z,y)} dz = \rp {\abs{t}^n} \normBC{f}{1}{\infty}.
 \end{split}
\end{equation*}
\end{proof}

\begin{lem}[The transport estimate] \label{lem: tran est}
The kinetic transport evolution group $U(t)$ obeys the estimate
\begin{equation} \label{eq: tran est 2}
  \normQRP{U(t)f}{\infty}{a}{a} \leq \normA{f}{a}, \quad 0<a\leq \infty,
 \end{equation}
for all $f \in L^{a}_{x,v}$.
\end{lem}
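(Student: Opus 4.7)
The plan is very short because the transport estimate is essentially a change of variables computation. First I would observe that since the two integration variables carry the same Lebesgue exponent, $L^a_xL^a_v$ coincides with $L^a_{x,v}$ (for $a<\infty$ by Fubini, and for $a=\infty$ as the essential supremum on the product space), so it is enough to bound $\|U(t)f\|_{L^a_{x,v}}$ uniformly in $t$.

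Next, for each fixed $t\in\R$ I would apply the change of variables $(x,v)\mapsto(y,v)$ with $y=x-tv$ in the double integral (or essential supremum). For fixed $v$ this is just a translation in $x$, hence has Jacobian $1$, so the map is measure-preserving on $\R^n_x\times\R^n_v$. For $0<a<\infty$ this yields
\[
\int_{\R^n}\int_{\R^n}|f(x-tv,v)|^a\,dx\,dv=\int_{\R^n}\int_{\R^n}|f(y,v)|^a\,dy\,dv,
\]
which gives $\|U(t)f\|_{L^a_{x,v}}=\|f\|_{L^a_{x,v}}$, and the $a=\infty$ case is the corresponding identity for the essential supremum. Taking $L^\infty_t$ then yields the claimed inequality (in fact equality).

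There is no real obstacle here; the only point worth flagging is the borderline $a=\infty$ and the sub-Banach range $0<a<1$, but both are handled by the same measure-preserving change of variables applied to $|f|^a$ (for $0<a<\infty$) or to $|f|$ directly (for $a=\infty$). The symmetry with Lemma~\ref{lem: decay est} is also worth noting: whereas the dispersive estimate exploits the $\sup$–integral in $v$ to absorb the Jacobian $|t|^{-n}$ coming from $v\mapsto tv$, the transport estimate needs no Jacobian at all because the shear $(x,v)\mapsto(x-tv,v)$ preserves Lebesgue measure on $\R^{2n}$.
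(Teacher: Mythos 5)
Your proof is correct and is exactly the reasoning behind the paper's one-word justification (``Trivial''): the shear $(x,v)\mapsto(x-tv,v)$ preserves Lebesgue measure on $\R^{2n}$, so $U(t)$ is an isometry on $L^a_{x,v}=L^a_xL^a_v$ for every $0<a\le\infty$, uniformly in $t$. Nothing to add.
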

\begin{proof}
 Trivial.
\end{proof}

\begin{cor}[The decay estimate] \label{cor: dec est}
The kinetic transport evolution group $U(t)$ obeys the estimate
 \begin{equation}\label{DispEst}
   \norm{U(t)f}_{L^r_xL^p_v} \leq \frac 1 {\abs{t}^{n\left(\frac 1 p - \frac 1 r\right)}}\norm{f}_{L^p_xL^r_v}, \quad 1 \leq p \leq r \leq \infty,
 \end{equation}
for all $f \in {L^{p}_xL^{r}_v}$.
\end{cor}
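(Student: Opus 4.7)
The plan is to deduce the decay estimate by bilinear (real-variable) interpolation between the two endpoint estimates already proved: the dispersive estimate of Lemma \ref{lem: decay est}, which lives at the vertex $(1/p,1/r)=(1,0)$ with decay $|t|^{-n}$, and the family of transport estimates of Lemma \ref{lem: tran est}, which live along the whole diagonal $(1/p,1/r)=(1/a,1/a)$ with $a\in[1,\infty]$ and cost no decay in $|t|$. Together these endpoints cover the vertex $(1,0)$ and the diagonal from $(0,0)$ to $(1,1)$, and the admissible region $\{(1/p,1/r)\colon 1\le p\le r\le\infty\}$ is precisely the closed triangle with these vertices.

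Concretely, given $(p,r)$ with $1\le p\le r\le\infty$, I would pick $a\in[1,\infty]$ and $\theta\in[0,1]$ such that
\[
\left(\tfrac{1}{p},\tfrac{1}{r}\right)=(1-\theta)(1,0)+\theta\left(\tfrac{1}{a},\tfrac{1}{a}\right),
\]
which forces $1/p-1/r=1-\theta$. Applying the Riesz--Thorin interpolation theorem for mixed-norm Lebesgue spaces (Benedek--Panzone) to the linear operator $U(t)$, with endpoint bounds
\[
U(t)\colon L^{1}_xL^{\infty}_v\to L^{\infty}_xL^{1}_v,\quad \text{norm }\le|t|^{-n},
\]
\[
U(t)\colon L^{a}_xL^{a}_v\to L^{a}_xL^{a}_v,\quad \text{norm }\le 1,
\]
yields the target operator bound
\[
\|U(t)\|_{L^{p}_xL^{r}_v\to L^{r}_xL^{p}_v}\le |t|^{-n(1-\theta)}\cdot 1^{\theta}=|t|^{-n(1/p-1/r)},
\]
which is exactly \eqref{DispEst}. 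Note that the domain and target exponents swap in the correct way under duality in both Lebesgue variables, so the interpolation is well-posed on mixed-norm spaces.

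I do not anticipate a serious obstacle: the only subtlety is invoking the Benedek--Panzone extension of Riesz--Thorin to the mixed-norm setting rather than the classical scalar Riesz--Thorin theorem, but this is a standard fact that applies in exactly this geometry (interpolation of a linear operator between two pairs of mixed-$L^{p}$ spaces with the indices varying linearly on each factor). Edge cases $p=r=a$ and $(p,r)=(1,\infty)$ simply reduce to the endpoint estimates themselves (no interpolation needed and the exponent $n(1/p-1/r)$ gives $0$ and $n$ respectively, consistent with the stated constants).
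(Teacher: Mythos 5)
Your proof is correct and is essentially the paper's own argument: the paper obtains the corollary by complex interpolation (Riesz--Thorin/Benedek--Panzone for mixed norms) between the dispersive bound $L^1_xL^\infty_v\to L^\infty_xL^1_v$ and the two transport bounds at $a=1,\infty$, which sweep out exactly the triangle you cover by interpolating the dispersive vertex against the one-parameter family of transport estimates. One terminological slip: the opening phrase ``bilinear (real-variable) interpolation'' is a misnomer---$U(t)$ is linear and Benedek--Panzone is a complex-method theorem---but the body of your argument invokes and applies the correct result.
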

\begin{proof}
Complex interpolation between the decay estimate (\ref{eq: disp est 2}) and the two transport estimates (\ref{eq: tran est 2}) with $a=1$ and $a=\infty$.
\end{proof}

\begin{lem} \label{lem: adjoint}
The formal adjoint to $U(t)$ is the operator $U^*(t)=U(-t)$.
\end{lem}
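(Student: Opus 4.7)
The plan is to verify the identity $\langle U(t) f, g\rangle = \langle f, U(-t) g\rangle$ for the natural $L^2(\R^n_x \times \R^n_v)$ pairing, which directly gives the formal adjoint. First I would unpack the definition: since $U(t) f(x,v) = f(x - tv, v)$, we have
\[
 \langle U(t) f, g\rangle = \int_{\R^n}\int_{\R^n} f(x - tv, v)\, \overline{g(x,v)}\, dx\, dv.
\]

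The key step is a change of variables in the $x$ integral only, treating $v$ as a parameter. Setting $y = x - tv$ (so $x = y + tv$ and $dy = dx$ for fixed $v$), the Jacobian is $1$ because the transformation is just a translation. This yields
\[
 \langle U(t) f, g\rangle = \int_{\R^n}\int_{\R^n} f(y, v)\, \overline{g(y + tv, v)}\, dy\, dv.
\]
Recognizing that $g(y + tv, v) = g(y - (-t)v, v) = [U(-t) g](y,v)$, the right-hand side is exactly $\langle f, U(-t) g\rangle$, which identifies the formal adjoint as $U^*(t) = U(-t)$.

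There is essentially no obstacle — the only ingredient is translation invariance of Lebesgue measure and the fact that $v$ is untouched by $U(t)$, so no Jacobian factor arises. To be careful I would state the computation under a density assumption (e.g.\ $f, g \in C_c(\R^{2n})$ or the Schwartz class), so that Fubini and the change of variables are unambiguously justified, and then note that the adjoint identity extends by density to whichever dual pairing is being used in the sequel.
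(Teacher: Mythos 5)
Your proof is correct and follows exactly the same route as the paper: unpack the definition, change variables $y = x - tv$ in the $x$-integral with $v$ fixed (unit Jacobian from translation invariance), and read off $U(-t)g$. The extra remark about working on a dense class is a harmless refinement the paper leaves implicit.
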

\begin{proof}
We denote by $\langle \cdot, \cdot \rangle$ the scalar product on $L^2(\R^{2n})$. Thus,
\begin{align*}
\langle U(t)f,g\rangle &= \Int f(x-tv,v) \overline{g(x,v)}dxdv\\
          &=\Int f(y,v) \overline{g(y+tv,v)}dydv=\langle f,U(-t)g\rangle,
\end{align*}
where we have made the substitution $y=x-tv$.
\end{proof}

\begin{lem}[Scaling properties of $U(t)$ and $W(t)$] \label{lem: scaling}
The evolution operators $U(t)$ and $W(t)$ enjoy the following scaling properties
\begin{align*}
U(t)f_\lambda = f\left({x}/{\lambda} - {t v}/{\lambda} , v\right)= \{U(\cdot)f\}(t/\lambda, x/\lambda, v),\\
\text{where  } f_\lambda(x,v) = f\left(x/{\lambda}, v\right),\\
U(t)f_\lambda = f\left(x / {\lambda} - {t v}/{\lambda} , {v}/{\lambda}\right)= \{U(\cdot)f\}(t, x/\lambda, v/\lambda),\\
\text{where  } f_\lambda(x,v) = f\left(x/{\lambda}, v/\lambda \right),\\
W(t)F_\lambda = \lambda \int_{-\infty}^{t / \lambda}F\left(s, x / {\lambda} - \left(t / \lambda - s \right) v, v\right)ds =  \lambda \{W(\cdot)F\}\left(t/ \lambda,x/ \lambda,v\right),\\
\text{where  } F_\lambda(t,x,v) = F(t/\lambda, x/\lambda, v),\\
W(t)F_\lambda = \intI F\left(s, x / {\lambda} - (t-s) v / {\lambda} , v / \lambda \right)ds= \{W(\cdot)F\}(t,x/\lambda,v/\lambda),\\
\text{where  } F_\lambda(t,x,v) = F\left(t, x/ \lambda, v/ \lambda\right).
\end{align*}
\end{lem}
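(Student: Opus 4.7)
My plan is to verify each of the four identities by direct substitution into the defining formulas
\[
U(t)f = f(x-tv,v), \qquad W(t)F = \intI F\bigl(s,x-(t-s)v,v\bigr)\,ds,
\]
so no real ideas are needed beyond careful bookkeeping and, for the $W(t)$ identities, a one-variable change of variable in the time integral.

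First I would handle the two $U(t)$ identities, which are pure algebra. For $f_\lambda(x,v)=f(x/\lambda,v)$, plugging into $U(t)f_\lambda$ gives $f\bigl((x-tv)/\lambda,v\bigr)=f(x/\lambda-tv/\lambda,v)$, and then recognizing the right-hand side as $\{U(\cdot)f\}(t/\lambda,x/\lambda,v)$ is immediate from the formula for $U$. The case $f_\lambda(x,v)=f(x/\lambda,v/\lambda)$ is identical except that the $v$-slot is also rescaled, which produces $\{U(\cdot)f\}(t,x/\lambda,v/\lambda)$ because $t\cdot(v/\lambda)=tv/\lambda$ matches the $x$-argument after scaling.

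For the $W(t)$ identities I would unfold the definition and then perform a substitution in $s$. With $F_\lambda(t,x,v)=F(t/\lambda,x/\lambda,v)$,
\[
W(t)F_\lambda=\intI F\bigl(s/\lambda,(x-(t-s)v)/\lambda,v\bigr)\,ds,
\]
and the change of variable $s=\lambda\sigma$ produces the factor $\lambda$ in front, moves the upper limit to $t/\lambda$, and turns the $x$-argument into $x/\lambda-(t/\lambda-\sigma)v$, which is exactly $\lambda\{W(\cdot)F\}(t/\lambda,x/\lambda,v)$. For $F_\lambda(t,x,v)=F(t,x/\lambda,v/\lambda)$ no time rescaling is needed: plugging in yields
\[
\intI F\bigl(s,x/\lambda-(t-s)v/\lambda,v/\lambda\bigr)\,ds,
\]
and one recognizes the $x$-argument as $x/\lambda-(t-s)(v/\lambda)$, matching $\{W(\cdot)F\}(t,x/\lambda,v/\lambda)$.

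There is no real obstacle here; the only pitfall is keeping straight which of $t$, $x$, $v$ gets rescaled in each case, and making sure that in the third identity the Jacobian $\lambda$ from the substitution $s=\lambda\sigma$ is carried correctly and not absorbed into the upper limit. So I would simply present the four computations side by side, being explicit about the substitution in the third one and noting that the other three require no change of variable at all.
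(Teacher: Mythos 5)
Your proposal is correct and is exactly the argument the paper intends: the paper's proof consists of the two words ``Direct inspection,'' which is precisely the direct substitution (together with the change of variable $s=\lambda\sigma$ in the third identity) that you carry out.
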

\begin{proof}
 Direct inspection.
\end{proof}

\section{Duality and the $TT^*$-principle} \label{sect: duality}

At the heart of the proof of \Str estimates lie duality arguments. In this section we introduce the main elements of all duality constructions in later proofs.
\subsection{Basics.}
Let us consider the operator
\begin{align*}
    T: \spaceLtwo \rightarrow \SpaceLqrp{q}{r}{r'}, \quad \{Tf\}(t,x,v)=f(x-tv,v)=U(t)f,
\end{align*}
for some Lebesgue exponents $2\leq q,r \leq \infty$.
Its formal adjoint is the $L^2$-valued integral
\begin{align*}
    T^*:\SpaceLqrp{q'}{r'}{r} \rightarrow \spaceLtwo, \quad \{T^*F\}(x,v)=\Int F(s, x + sv,v)ds.
\end{align*}
The composition  of the two has the form
\begin{gather*}
    TT^*: \SpaceLqrp{q'}{r'}{r} \rightarrow \SpaceLqrp{q}{r}{r'}, \\
                          \{TT^*F\}(t,x,v)=\Int F(s, x - (t-s)v,v)ds = \Int U(t-s)F(s)ds.
\end{gather*}
In view of the $TT^*$-principle, see e.g. \cite[p. 113]{S}, $T$ and $TT^*$ are equally bounded with $\norm{T}^2=\norm{TT^*}$. Thus, the following two estimates are equivalent
\begin{align}
 \normQRP{Tf}{q}{r}{r'}    &\leq C \normA{f}{2}, \quad\qquad \forall f \in \spaceLa{2}, \label{est: T}\\
 \normQRP{TT^*F}{q}{r}{r'} &\leq C^2 \normQRP{F}{q'}{r'}{r}, \quad \forall F \in \spaceLqrpd, \label{est: TT*}
\end{align}
where $C=\norm{T}$. We shall call \eqref{est: TT*} the symmetric $TT^*$-estimate. By $\langle \cdot, \cdot \rangle$ we denote the duality pairing on $\R^{2n}$
\[
\langle f, g \rangle = \int_{R^{2n}}f(x,v)g(x,v)dxdv
\]
for the (mixed) Lebesgue spaces. Thus, in bilinear formulation \eqref{est: TT*} reads
\begin{align*}
    \abs{\Int \langle \{TT^*F\}(t), G(t) \rangle dt} \leq C^2 \normQRP{F}{q'}{r'}{r}\normQRP{G}{q'}{r'}{r}.
\end{align*}
In view of Lemma \ref{lem: adjoint}, this is equivalent to
\begin{align*}
    \abs{\Int\Int \langle U(s)^*F, U(t)^*G \rangle dsdt} \leq C^2 \normQRP{F}{q'}{r'}{r}\normQRP{G}{q'}{r'}{r},
\end{align*}
$\forall F, \; \forall G \in \SpaceLqrp{q'}{r'}{r}$.
In \cite{KT} Keel and Tao noted that by symmetry, i.e. by changing the roles of $F$ and $G$, the latter estimate is always implied by the estimate
\begin{align} \label{eq: B 1}
   \abs{B(F,G)} \leq C^2 \normQRP{F}{q'}{r'}{r}\normQRP{G}{q'}{r'}{r}, \quad \forall F, \; \forall G \in \SpaceLqrp{q'}{r'}{r},
\end{align}
for the bilinear form
\[
  B(F,G) = \iint_{s<t}  \langle U(s)^*F, U(t)^*G \rangle dsdt.
\]
Furthermore, we shall prove in the special context of the KT equation that these two estimates are in fact equivalent, see Lemma \ref{lem: TT* KT}.

We now consider the inhomogeneous estimates. Suppose that $\trip$ and $\tript$ are two exponent triplets such that
\begin{align*}
\normQRP{Tf}{q}{r}{p}    &\leq C \normA{f}{a}, \quad\qquad \forall f \in \spaceLa{a},\\
\normQRP{Tf}{\tilde q}{\tilde r}{\tilde p}    &\leq C \normA{f}{a'}, \quad\qquad \forall f \in \spaceLa{a'},
\end{align*}
for some $1\leq a \leq \infty$. The composition
\[
  \spaceLqrptd \stackrel{T^*}{\To} \spaceLa{a} \stackrel{T}{\To} \spaceLqrp
\]
is bounded and thus
\begin{align} \label{est: TT* 2}
 \normQRP{TT^*F}{q}{r}{p} \leq C^2 \normQRP{F}{\tilde q'}{\tilde r'}{ \tilde p'}, \quad \forall F \in \spaceLqrptd.
\end{align}
This is the general non-symmetric $TT^*$-estimate for the KT equation. It does not any longer imply boundedness for the operator $T$, but as we shall see next, it implies boundedness for the operator $W(t)F$.

\begin{lem} \label{lem: TT* KT}
In the context of the KT equation the non-symmetric $TT^*$-estimate and the inhomogeneous \Str estimates are equivalent, i.e
\eqref{est: TT* 2} is equivalent to
\begin{align}
  \normQRP{W(t)F}{q}{r}{p} \leq C^2 \normQRP{F}{\tilde q'}{\tilde r'}{ \tilde p'}, \quad \forall F \in \spaceLqrptd.\label{eq: TT W}
\end{align}
\end{lem}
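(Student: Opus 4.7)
The plan is to exploit two structural features specific to the KT flow: the fact that $U(t)$ acts by pointwise composition (so $|U(\tau)f| = U(\tau)|f|$ and nonnegativity is preserved), and the reflection symmetry $(s,x,v)\mapsto(-s,x,-v)$ under which the advanced and retarded halves of $TT^*$ are interchanged. These two observations will, respectively, bypass the need for the Christ--Kiselev lemma in the harder direction and convert the ``advanced'' half of the time integral into the ``retarded'' half in the easier direction.

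For the easy direction $\eqref{eq: TT W}\Rightarrow\eqref{est: TT* 2}$, I would write $TT^*F = WF + \tilde W F$, where
\[
\tilde W F(t,x,v) := \int_t^\infty U(t-s)F(s)(x,v)\,ds = \int_t^\infty F(s,x-(t-s)v,v)\,ds.
\]
A change of variable $s\mapsto -s$ in this expression, combined with the sign flip $v\mapsto -v$, yields the identity
\[
\tilde W F(t,x,v) = WG(-t,x,-v), \qquad G(s,y,w):=F(-s,y,-w).
\]
Both the output norm $L^q_t L^r_x L^p_v$ and the input norm $L^{\tilde q'}_t L^{\tilde r'}_x L^{\tilde p'}_v$ are invariant under $(t,x,v)\mapsto(-t,x,-v)$ and under $F\mapsto G$, so $\tilde W$ inherits exactly the same bound as $W$. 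The triangle inequality then gives \eqref{est: TT* 2} with a harmless factor of $2$.

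For the converse $\eqref{est: TT* 2}\Rightarrow\eqref{eq: TT W}$, I would invoke the pointwise positivity of the KT flow. Since $U(t-s)F(s)(x,v) = F(s,x-(t-s)v,v)$ is composition, not averaging, we have the pointwise estimate
\[
|WF(t,x,v)| \le \int_{-\infty}^t |F(s,x-(t-s)v,v)|\,ds \le \int_{-\infty}^\infty |F(s,x-(t-s)v,v)|\,ds = TT^*|F|(t,x,v).
\]
Taking the $L^q_t L^r_x L^p_v$ norm on both sides and applying \eqref{est: TT* 2} to $|F|$, whose mixed-norm coincides with that of $F$, yields \eqref{eq: TT W}.

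The only delicate step is the reflection identity in the first direction, which just requires careful tracking of the substitution $s\mapsto -s$; everything else is automatic. It is precisely the composition/positivity nature of $U(t)$ that allows the converse direction to go through without Christ--Kiselev, which would be the standard tool for this passage in the dispersive setting of the Schr\"odinger or wave equations.
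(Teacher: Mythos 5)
Your proof is correct, and both of your directions use the same two key ideas as the paper (the pointwise domination $|W(t)F|\le TT^*|F|$ afforded by the positivity of the KT flow, and a reflection symmetry to interchange the advanced and retarded halves of $TT^*$), but the implementation of the second idea differs in a worthwhile way. The paper passes to the bilinear form $B(F,G)=\iint_{s<t}\langle U(s)^*F,U(t)^*G\rangle\,ds\,dt$, applies the reflection $(t,x)\mapsto(-t,-x)$, and concludes from the observation that $TT^*$ corresponds to $B+B'$; your argument stays at the operator level, splitting $TT^*=W+\tilde W$ and proving the explicit conjugation identity $\tilde WF(t,x,v)=WG(-t,x,-v)$ with $G(s,y,w)=F(-s,y,-w)$ under the reflection $(t,v)\mapsto(-t,-v)$. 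Both reflections are valid symmetries of the free transport flow, and both yield the same conclusion, but your version is more elementary: it avoids the duality pairing entirely and makes the equality of the retarded and advanced bounds manifest by a direct change of variables. One small presentational point: you label $\eqref{eq: TT W}\Rightarrow\eqref{est: TT* 2}$ the ``easy direction'' while the paper treats $\eqref{est: TT* 2}\Rightarrow\eqref{eq: TT W}$ as the immediate one; this is only a matter of framing, since the pointwise domination direction is the one that replaces Christ--Kiselev, while your reflection trick handles the other half.
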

\begin{proof}
(i) In one direction the claim follows immediately from
\[
 \abs{W(t)F} \leq TT^*\abs{F}.
\]

(ii) In the other direction we have the following argument. It is easy to see that by duality \eqref{eq: TT W} is equivalent to
\begin{align} \label{eq: B 2}
   \abs{B(F,G)} \lesssim \normQRP{F}{\tilde q'}{\tilde r'}{\tilde p'} & \normQRP{G}{q'}{r'}{p'}, \\
   &\forall F \in \spaceLqrptd ,\; \forall G \in \spaceLqrpd. \notag
\end{align}
By making the substitution $\sigma=-s,\,\tau=-t$ in the definition of $B(F,G)$ we get
\begin{equation*}
 \begin{split}
   B(F,G) = \iint_{\tau < \sigma}  \langle U(-\sigma)^*F, U(-\tau)^*G \rangle d\tau d\sigma.
 \end{split}
\end{equation*}
 The integral in the line above can be written as
\[
  (-1)^n  \iint_{\tau < \sigma}  \langle U(\sigma)^*F', U(\tau)^*G' \rangle d\tau d \sigma \stackrel{def}{=} (-1)^n B'(F',G'),
\]
by making the substitution $x \To -x$ and setting $F'(t,x,v)=F(-t,-x,v)$, $G'(t,x,v)$ = $G(-t,-x,v)$. Thus the boundedness of the bilinear form $B(\cdot, \cdot)$ implies the boundedness of the bilinear form $B'(\cdot, \cdot)$ on the same spaces. The claim follows from the fact that the boundedness of $TT^*$ is equivalent to that of the bilinear form $B+B'$.
\end{proof}

For convenience we summarize the bilinear formulation of the duality arguments of this paragraph in
\begin{lem}\label{lem: bilinear}
\mbox{}
\begin{enumerate}
\renewcommand{\labelenumi}{(\roman{enumi})}
\item The boundedness of the operator $T : \spaceLa{2} \rightarrow  \SpaceLqrp{q}{r}{r'}$ of the form $Tf=U(t)f$ is equivalent to the boundedness of the bilinear mapping $B: \SpaceLqrp{q'}{r'}{r}$ $\times$ $\SpaceLqrp{q'}{r'}{r}  \rightarrow \C$.

\item The boundedness of the operators $W(t): \SpaceLqrp{\tilde q'}{\tilde r'}{\tilde p'} \rightarrow \SpaceLqrp{q}{r}{p}$ and $TT^*: \SpaceLqrp{\tilde q'}{\tilde r'}{\tilde p'} \rightarrow \SpaceLqrp{q}{r}{p}$ is equivalent to that of the bilinear mapping $B: \SpaceLqrp{\tilde q'}{\tilde r'}{\tilde p'} \times \SpaceLqrp{q'}{r'}{p'} \rightarrow \C$.
\end{enumerate}
\end{lem}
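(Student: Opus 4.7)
The lemma packages together facts that have already been essentially derived in the preceding paragraphs, so the proof will amount to stitching them together cleanly. The plan is to separate the two parts and reduce each to a combination of the standard $TT^*$ principle, ordinary duality, and (for part (i)) the symmetrization observation of Keel and Tao that was invoked around equation \eqref{eq: B 1}.

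For part (i), I would proceed in three steps. First, by the $TT^*$ principle (\cite[p.~113]{S}), the operator $T:\spaceLa{2}\to\SpaceLqrp{q}{r}{r'}$ is bounded if and only if $TT^*:\SpaceLqrp{q'}{r'}{r}\to\SpaceLqrp{q}{r}{r'}$ is bounded, with norms related by $\|T\|^2=\|TT^*\|$. Second, by the standard duality pairing $\langle\cdot,\cdot\rangle$ and Lemma \ref{lem: adjoint} (which gives $U(t)^*=U(-t)$), boundedness of $TT^*$ is equivalent to boundedness of the symmetric bilinear form
\[
 \widetilde B(F,G)=\Int\Int\langle U(s)^*F(s),U(t)^*G(t)\rangle\,ds\,dt
\]
on $\SpaceLqrp{q'}{r'}{r}\times \SpaceLqrp{q'}{r'}{r}$. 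Third, splitting the $(s,t)$-integration along the diagonal $\{s=t\}$ (which has planar measure zero) gives $\widetilde B(F,G)=B(F,G)+B(G,F)$, so the symmetry argument in \cite{KT} recalled after \eqref{eq: B 1} shows that boundedness of $\widetilde B$ is equivalent to boundedness of $B$ on the same pair of spaces.

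For part (ii), the equivalence of the $TT^*$-bound and the $W(t)$-bound is exactly the content of Lemma \ref{lem: TT* KT}, so it is enough to identify the bilinear form associated with $W(t)$. Standard duality says that $W(t):\SpaceLqrp{\tilde q'}{\tilde r'}{\tilde p'}\to \SpaceLqrp{q}{r}{p}$ is bounded iff the pairing $(F,G)\mapsto \Int\langle W(t)F,G(t)\rangle\,dt$ is bounded on $\SpaceLqrp{\tilde q'}{\tilde r'}{\tilde p'}\times\SpaceLqrp{q'}{r'}{p'}$. A direct computation, using $W(t)F=\int_{-\infty}^{t}U(t-s)F(s)\,ds$ together with $U(t-s)^*=U(s-t)$, yields
\[
 \Int\langle W(t)F,G(t)\rangle\,dt
 =\iint_{s<t}\langle F(s),U(s-t)G(t)\rangle\,ds\,dt
 =\iint_{s<t}\langle U(s)^*F(s),U(t)^*G(t)\rangle\,ds\,dt,
\]
which is exactly $B(F,G)$. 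Combined with Lemma \ref{lem: TT* KT}, this gives the claimed equivalence for both $W(t)$ and $TT^*$.

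There is no genuine obstacle here: the only point requiring mild care is the identification $\Int\langle W(t)F,G(t)\rangle\,dt=B(F,G)$, for which one must keep track of the asymmetric time cutoff $s<t$ in $W(t)$ and use $U(t)^*=U(-t)$ to rewrite both factors in adjoint form. Everything else is a bookkeeping exercise with the $TT^*$ principle and the already-established Lemma \ref{lem: TT* KT}.
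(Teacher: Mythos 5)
Your part (ii) is correct and follows the paper's intended route: the equivalence of the $W(t)$-bound and the $TT^*$-bound is exactly Lemma \ref{lem: TT* KT}, and your computation identifying $\int\langle W(t)F,G(t)\rangle\,dt$ with $B(F,G)$ via $U(t)^*=U(-t)$ is the same duality step the paper uses.

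The third step of your part (i) has a genuine gap. You write that the Keel--Tao symmetry observation, combined with the decomposition $\widetilde B(F,G)=B(F,G)+B(G,F)$, shows that boundedness of $\widetilde B$ is \emph{equivalent} to boundedness of $B$. The symmetry argument gives only one direction: if $B$ is bounded on $X\times X$ then (swapping names) so is $B(G,F)$, hence $\widetilde B$ is bounded. The converse, that boundedness of $\widetilde B$ implies boundedness of $B$, does not follow from the decomposition and symmetry alone — $\widetilde B$ controls only the sum $B(F,G)+B(G,F)$, not each summand separately. The paper is explicit about this distinction: after \eqref{eq: B 1} it states that the symmetry observation gives one implication and that the KT-specific Lemma \ref{lem: TT* KT} is needed to upgrade it to an equivalence. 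Since part (i) asserts a two-way equivalence (boundedness of $T$ iff boundedness of $B$), the missing direction is exactly the one you would need. To close it, route through $W(t)$: boundedness of $\widetilde B$ is equivalent to that of $TT^*$ by duality; the pointwise comparison $\abs{W(t)F}\leq TT^*\abs{F}$ then gives boundedness of $W(t)$; and duality (your part~(ii) computation) returns boundedness of $B$. Equivalently, prove part (ii) first and obtain part (i) by specializing $\tilde q=q$, $\tilde r=r$, $\tilde p=r'$ together with the $TT^*$ principle — this is in substance how the paper organizes the argument, with Lemma \ref{lem: TT* KT} carrying the nontrivial direction.
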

\subsection{Equivalence of \Str estimates.}
\begin{lem}[The Duality lemma] \label{lem: dual}
The following two estimates for $W(t)$ are equivalent
\begin{align*}
  \normQRP{W(t)F}{q}{r}{p}   &\lesssim \normQRP{F}{\tilde q'}{\tilde r'}{\tilde p'}, \quad \forall F \in \spaceLqrptd,\\
  \normQRP{W(t)F}{\tilde q}{\tilde r}{\tilde p} &\lesssim \normQRP{F}{q'}{r'}{p'}, \quad \forall F \in \spaceLqrpd,
\end{align*}
for $1\leq p,q\leq \infty$.
\end{lem}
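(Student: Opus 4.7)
My strategy is to pass both estimates through the bilinear-form reformulation of Lemma~\ref{lem: bilinear}~(ii), at which point they differ only by a swap of the two arguments; that swap is then supplied by the time-reversal/spatial-reflection symmetry already used in the proof of Lemma~\ref{lem: TT* KT}. By Lemma~\ref{lem: bilinear}~(ii), the first displayed estimate is equivalent to
\[
 |B(F,G)| \lesssim \normQRP{F}{\tilde q'}{\tilde r'}{\tilde p'}\,\normQRP{G}{q'}{r'}{p'} \qquad \forall F \in \spaceLqrptd,\; \forall G \in \spaceLqrpd,
\]
while the second is equivalent to the analogous inequality with the two exponent triplets interchanged. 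It therefore suffices to show that one of these bilinear bounds implies the other.

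To produce the swap, I would reuse the substitution $(\sigma,\tau)=(-s,-t)$ followed by $x\mapsto -x$ from the proof of Lemma~\ref{lem: TT* KT}, combined with the elementary observation that the companion bilinear form $B'$ (with restriction $\tau<\sigma$ in place of $s<t$) coincides with $B$ after a relabeling of its two arguments. Setting $F'(t,x,v)=F(-t,-x,v)$ and $G'(t,x,v)=G(-t,-x,v)$, that calculation yields the identity $B(F,G) = (-1)^n B(G',F')$. Since the map $H \mapsto H'$ is an isometry of every mixed Lebesgue norm $L^a_tL^b_xL^c_v$, taking absolute values and applying the first bilinear inequality to the pair $(G',F')$ immediately produces the second; the reverse implication is identical. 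Chaining this equivalence with Lemma~\ref{lem: bilinear}~(ii) on both sides closes the loop.

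\textbf{Main obstacle.} Given the preparatory lemmas of this section, there is no real obstacle; the argument is essentially a one-line consequence of the $TT^*$ machinery together with the symmetry $B \leftrightarrow B'$ already recorded in Lemma~\ref{lem: TT* KT}. The only mildly delicate point is the endpoint cases where some exponent equals $1$ or $\infty$, so that $(L^q)^* = L^{q'}$ fails in the usual Banach-space sense; here one must read the bilinear reformulation as the quantitative pairing that characterizes boundedness of $W(t)$ between the given Lebesgue spaces, which is precisely the content of Lemma~\ref{lem: bilinear}.
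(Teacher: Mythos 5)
Your proof is correct and follows the same route as the paper. The paper's one-line argument invokes the symmetry of $TT^*$ directly; you unpack exactly what that symmetry means at the bilinear-form level, using Lemma~\ref{lem: bilinear}~(ii) together with the reflection identity $B(F,G)=(-1)^n B(G',F')$ extracted from the proof of Lemma~\ref{lem: TT* KT} (the sign is harmless since the Jacobian of $x\mapsto -x$ has unit modulus and absolute values are taken), and your closing remark about working at the bilinear level to avoid double-dualization issues at endpoints is a sound reading of how the paper's lemma is meant to be applied.
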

\begin{proof}
It follows immediately from the fact the boundedness of $W(t)$ is equivalent to that of the symmetric operator $TT^*$ on the considered spaces.
\end{proof}

\begin{thm}[The Equivalence Theorem] \label{thm: equiv KT 2}
\mbox{}
\begin{enumerate}
\renewcommand{\labelenumi}{({\roman{enumi})}}
 \item The following three estimates
\begin{align}
  \normQRP{U(t)f}{q}{r}{p}    &\lesssim \normBC{f}{b}{c},   \qquad\qquad \forall f \in \spaceLbc,          \label{est: equiv hom v2}\\
  \normQRP{W(t)F}{q}{r}{p}    &\lesssim \normQRP{F}{1}{b}{c}, \quad \;\;\forall F \in \SpaceLqrp{1}{b}{c},        \label{est: equiv inhom 1 v2}\\
  \normQRP{W(t)F}{\infty}{b'}{c'} &\lesssim \normQRP{F}{q'}{r'}{p'},   \quad \forall F \in \SpaceLqrp{q'}{r'}{p'}.      \label{est: equiv inhom 2 v2}
\end{align}
are equivalent whenever  $1\leq q,r,p, b,c \leq \infty$.
 \item Whenever $b=c=2$  estimate \eqref{est: equiv hom v2} is equivalent to
\begin{align}
  \normQRP{W(t)F}{q}{r}{r'}       &\lesssim \normQRP{F}{q'}{r'}{r}, \quad  \forall F \in \SpaceLqrp{q'}{r'}{r}.         \label{est: equiv inhom qq' v2}
\end{align}
\end{enumerate}
\end{thm}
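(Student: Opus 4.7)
The plan is to establish Part (i) through the chain $\eqref{est: equiv hom v2}\Rightarrow\eqref{est: equiv inhom 1 v2}\Leftrightarrow\eqref{est: equiv inhom 2 v2}\Rightarrow\eqref{est: equiv hom v2}$, and to handle Part (ii) via the classical symmetric $TT^{*}$-principle combined with Lemma~\ref{lem: TT* KT}.

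For $\eqref{est: equiv hom v2}\Rightarrow\eqref{est: equiv inhom 1 v2}$ I would write $W(t)F=\int\chi_{s<t}\,U(t-s)F(s)\,ds$ and apply Minkowski's integral inequality with respect to the outer norm $L^{q}_{t}L^{r}_{x}L^{p}_{v}$. Translation invariance of the $L^{q}_{t}$ norm converts $\|U(t-s)F(s)\|_{L^{q}_{t}L^{r}_{x}L^{p}_{v}}$ into $\|U(\cdot)F(s)\|_{L^{q}_{t}L^{r}_{x}L^{p}_{v}}$, which by \eqref{est: equiv hom v2} is $\lesssim\|F(s)\|_{L^{b}_{x}L^{c}_{v}}$; integration in $s$ produces \eqref{est: equiv inhom 1 v2}. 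The equivalence $\eqref{est: equiv inhom 1 v2}\Leftrightarrow\eqref{est: equiv inhom 2 v2}$ is an immediate application of the Duality Lemma~\ref{lem: dual} with source triplet $(1,b,c)$ and target triplet $(q,r,p)$, since the dual of the former mapping is precisely $W\colon L^{q'}_{t}L^{r'}_{x}L^{p'}_{v}\to L^{\infty}_{t}L^{b'}_{x}L^{c'}_{v}$.

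The core implication is $\eqref{est: equiv inhom 1 v2}\Rightarrow\eqref{est: equiv hom v2}$. For $f$ in a dense subclass of $L^{b}_{x}L^{c}_{v}$ (say smooth compactly supported), I would introduce the mollification $F_{\epsilon}(s,x,v)=\epsilon^{-1}\chi_{[-\epsilon,0]}(s)\,f(x,v)$, which satisfies $\|F_{\epsilon}\|_{L^{1}_{t}L^{b}_{x}L^{c}_{v}}=\|f\|_{L^{b}_{x}L^{c}_{v}}$. For $t>\epsilon$,
\[
W(t)F_{\epsilon}=\frac{1}{\epsilon}\int_{-\epsilon}^{0}U(t-s)f\,ds\xrightarrow{\;\epsilon\to 0\;}U(t)f
\]
by a Lebesgue-differentiation argument using continuity of $s\mapsto U(s)f$ in $L^{r}_{x}L^{p}_{v}$. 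Fatou's lemma applied to the outer norm then yields $\|U(t)f\,\chi_{t>0}\|_{L^{q}_{t}L^{r}_{x}L^{p}_{v}}\lesssim\|f\|_{L^{b}_{x}L^{c}_{v}}$. Negative times are recovered via the velocity reflection $Rg(x,v)=g(x,-v)$: $R$ is an isometry on $L^{b}_{x}L^{c}_{v}$ and $L^{r}_{x}L^{p}_{v}$ and satisfies the intertwining $U(t)R=RU(-t)$, so applying the half-line bound to $Rf$ transports it to the bound for $U(t)f\,\chi_{t<0}$. A density argument then delivers \eqref{est: equiv hom v2} on all of $L^{b}_{x}L^{c}_{v}$.

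For Part (ii), reading \eqref{est: equiv hom v2} with $b=c=2$ and the implicit matching $p=r'$ gives $\|U(t)f\|_{L^{q}_{t}L^{r}_{x}L^{r'}_{v}}\lesssim\|f\|_{L^{2}_{x,v}}$. Because the source space is Hilbert, the classical symmetric $TT^{*}$-principle equates this with $\|TT^{*}F\|_{L^{q}_{t}L^{r}_{x}L^{r'}_{v}}\lesssim\|F\|_{L^{q'}_{t}L^{r'}_{x}L^{r}_{v}}$, and Lemma~\ref{lem: TT* KT} then replaces $TT^{*}$ by $W$, producing \eqref{est: equiv inhom qq' v2}. The main obstacle is the Fatou step in $\eqref{est: equiv inhom 1 v2}\Rightarrow\eqref{est: equiv hom v2}$: one must secure pointwise a.e.\ convergence in $t$ of $\|W(t)F_{\epsilon}\|_{L^{r}_{x}L^{p}_{v}}$ so that Fatou applies. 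For interior exponents $1\le r,p<\infty$ this follows from strong continuity of the transport flow on a dense subclass, but the endpoints ($r=\infty$ or $p=\infty$) are more delicate and are most cleanly handled by passing through the dualized estimate \eqref{est: equiv inhom 2 v2} and a weak-$\ast$ compactness argument.
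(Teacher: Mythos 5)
Your proposal is correct and follows the same overall architecture as the paper: the forward implication $\eqref{est: equiv hom v2}\Rightarrow\eqref{est: equiv inhom 1 v2}$ by Minkowski and time-translation invariance, the equivalence $\eqref{est: equiv inhom 1 v2}\Leftrightarrow\eqref{est: equiv inhom 2 v2}$ by the Duality Lemma~\ref{lem: dual}, a mollified delta function to close the circle, and, for part (ii), the symmetric $TT^{*}$-principle combined with Lemma~\ref{lem: TT* KT} (equivalently Lemma~\ref{lem: bilinear}). The one real difference is in how the key implication $\eqref{est: equiv inhom 1 v2}\Rightarrow\eqref{est: equiv hom v2}$ is executed. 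You mollify in time, form $W(t)F_{\epsilon}$ directly, and try to pass to the limit on the left-hand norm by Fatou; as you yourself note, this requires strong continuity of $s\mapsto U(s)f$ into $L^{r}_{x}L^{p}_{v}$, which fails at $r=\infty$ or $p=\infty$. The paper instead works in the bilinear formulation from the outset: it shows $B(\delta_{\epsilon}f,G)\to\int\langle U(t)f,G\rangle\,dt$ for smooth compactly supported $G$, using only continuity of the \emph{scalar} function $s\mapsto\langle f,U(-s)G\rangle$, so the exponents $r,p$ never enter and the endpoint cases do not arise. Your suggested workaround---routing through the dualized estimate $\eqref{est: equiv inhom 2 v2}$---is in effect exactly the paper's choice made from the start, so the two arguments end up converging. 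A small bonus in your write-up is the explicit handling of negative times via the reflection $Rg(x,v)=g(x,-v)$ and the intertwining $U(t)R=RU(-t)$; the paper's computation is carried out only for $t>0$ and leaves this routine extension implicit.
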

\begin{proof}
(i) The homogeneous estimate \eqref{est: equiv hom v2} trivially implies the first inhomogeneous estimate \eqref{est: equiv inhom 1 v2}. In view of the Duality lemma \ref{lem: dual}, the two inhomogeneous estimates \eqref{est: equiv inhom 1 v2} and \eqref{est: equiv inhom 2 v2} are equivalent. All it remains to show is that \eqref{est: equiv inhom 1 v2} implies \eqref{est: equiv hom v2}.

Let us first give a short formal proof. We choose $F(t) = \delta(t) f$ where $\delta(t)$ is the delta function on $\R$ and $f \in \spaceLbc$. Consequently, $ W(t)[\delta(\cdot)f]=U(t)f$ and thus
\begin{align*}
 \normQRP{U(t)f}{q}{r}{p} \lesssim \normQRP{\delta(t)f}{1}{b}{c}= \normLbc{f}{b}{c}.
\end{align*}

To make that rigorous instead of $\delta(t)$ we consider a smooth approximation of the identity $\delta_{\epsilon}(t)$, for $\epsilon >0$. So we are given the estimate
\begin{align*}
 \abs{B(F,G)} \lesssim \normQRP{F}{1}{b}{c}\normQRP{G}{q'}{r'}{p'}, \quad  \forall F \in \SpaceLqrp{1}{b}{c}, \; \forall G \in \SpaceLqrp{q'}{r'}{r}.
\end{align*}
It would be enough to prove that
\[
 B(\delta_{\epsilon}*f, G) \To \Int \langle U(t)f, G \rangle dt
\]
since we have that $\normQRP{\delta_{\epsilon}*f}{1}{b}{c}$ $=$ $\normBC{f}{b}{c}$, for any $\epsilon > 0$. To prove the limit it would be enough to consider only nonnegative functions $f$ and smooth nonnegative functions $G$ of compact support in $t \geq 0$. For $t>0$ we have
\begin{align*}
 \begin{split}
 B(\delta_{\epsilon}f, G) &= \int \intI \langle \delta_{\epsilon}(s)*f, U(s-t)G \rangle dsdt \\
    &= \Int \Int \delta_{\epsilon}(t-s) \langle f, U(-s)G \rangle dsdt \To \int \langle U(t)f, G \rangle dt.
 \end{split}
\end{align*}
The last limit is justified by the fact that the function $h(s)$ $=$ $\langle f, U(-s)G \rangle $ is continuous and thus $\delta_{\epsilon}*h(t)$ $\To$ $h(t)$ as $\epsilon$ $\To$ $0$. Then, in view of Fatou's lemma,
\begin{align*}
 \begin{split}
  \int \langle U(t)f, G \rangle dt &\leq \liminf_{\epsilon \rightarrow 0} B(\delta_{\epsilon}f, G) \\
    &\lesssim \normLbc{f}{b}{c}\normQRP{G}{q'}{r'}{p'}, \quad \forall f \in \spaceLbc.
 \end{split}
\end{align*}

(ii) This follows directly from Lemma \ref{lem: bilinear}.
\end{proof}

\begin{rem}\label{rem: Equiv thm KT}
We shall need a slightly more general form of the the Equivalence Theorem in the sequel, where the temporal norm is the Lorentz $L^{q, s}$-norm in time. In fact, we shall be only interested in the case when $s=q$ and thus $L^{q, q}$ is equivalent to $L^{q}$, and in the cases when $s=1$ or $s=\infty$. The proof is almost identical and will be omitted.
\end{rem}

\subsection{Local in time decompositions and scaling}

We shall further refine our main tool which is Lemma \ref{lem: bilinear} by introducing a temporal localization of the bilinear form $B$. More precisely, $B$ shall be decomposed into a sum of scaling invariant dyadic pieces induced by a Whitney's dyadic decomposition applied on the domain $\Omega = \{(t,s)|s<t\}$ used to define $B$.

\begin{defn} We call any positive integer that is a power of two a dyadic number. Furthermore, we call a square $Q$ in $\R^2$ dyadic if its side length is a dyadic number and the coordinates of its vertices are integer multiples of dyadic numbers.
\end{defn}

We apply  Whitney's dyadic decomposition on $\Omega$ and obtain the family $\mathcal O$ of essentially disjoint dyadic squares $Q$ (by that we mean that overlapping on the sides is still possible) such that the distance between any square $Q \in \mathcal{O}$ and the boundary of $\Omega$ ($\{(t,s) | t=s \}$) is approximately proportional to the diameter of $Q$. This is immediately obvious in Figure \ref{fig: Whitney}.
\begin{figure}[htp]
\centering
\resizebox{5cm}{!}{\includegraphics[viewport=0 0 285pt 285pt, clip]{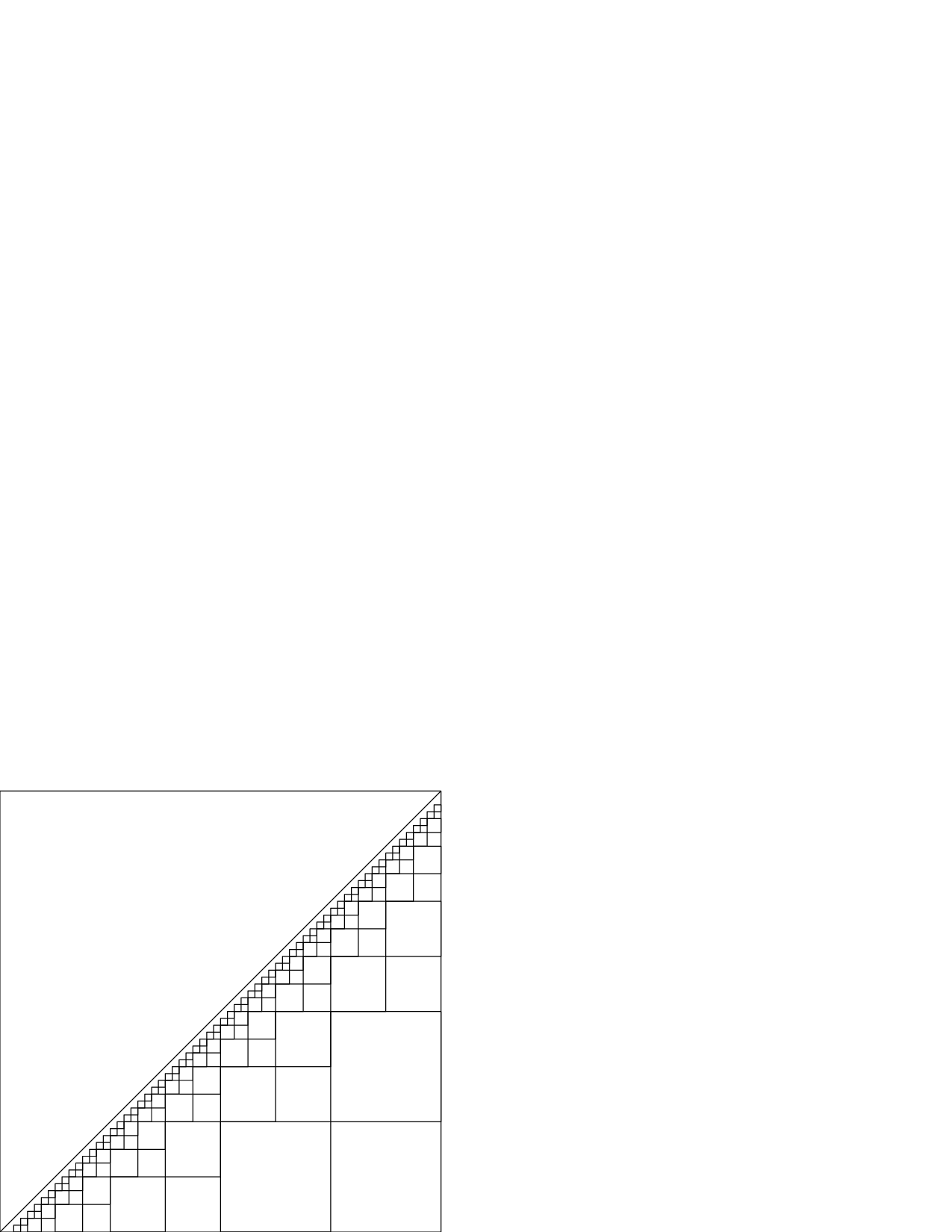}}
\caption{Whitney's decomposition for the region $s < t$}\label{fig: Whitney}
\end{figure}
By $\Ol$ we denote the collection of all squares in $\mathcal{O}$ whose side length is $\lambda$. Thus we obtain the representation
\[
B(F, G) = \sum_\lambda \sum_{Q \in \Ol} B_Q(F, G), \quad  \text{where } \Omega = \bigcup_\lambda \bigcup_{Q \in \Ol},
\]
and
\begin{equation}\label{BQ}
B_Q(F, G) = \iint_{Q} \langle U^*(s)F(s), U^*(t)G(t) \rangle ds dt.
\end{equation}
Furthermore, whenever $Q = J \times I$ and $Q \in \Ol$ we have
\begin{equation}\label{lambdaIJ}
\lambda = \abs I = \abs J \sim \mathrm{dist}(\Omega, \partial\Omega)\sim \mathrm{dist}(I, J).
\end{equation}
The localized bilinear operator $B_Q$ scales in the following way
\begin{equation}\label{BilStrichQ}
\abs{B_{Q}(F, G)} \lesssim \lambda^{\beta (q,r,{\tilde{q}},{\tilde{r}})}  \normQRPDloc{F}{q}{r}{p}{J} \normQRPloc{G}{q'}{r'}{p'}{I},
\end{equation}
where
\begin{align} \label{eq: beta KT}
 \beta(q,r,\tilde q, \tilde r) = \bbeta.
\end{align}
The range of the Lebesgue exponents $\triptrip$ for which we can prove that \eqref{BilStrichQ} holds for every $Q \in \Ol$ will be presented in Lemma \ref{lem: local inhom estim KT}. Property \eqref{BilStrichQ} implies
\begin{lem}\label{cor: 1/q+}
If $\rp q + \rp {\tilde{q}} \leq 1$, then                         
\begin{equation}\label{cor1Est}
\sum_{Q \in \Ol}\abs{B_{Q}(F, G)} \lesssim  \lambda^{\beta(q, r ,{\tilde{q}}, {\tilde{ r}})} \normQRPDloc{F}{q}{r}{p}{\R} \normQRPloc{G}{q'}{r'}{p'}{\R}.
\end{equation}
for every $F \in \SpaceLqrptdloc{q}{r}{p}{\R}$, and every $G \in \SpaceLqrploc{q'}{r'}{p'}{\R}$.
\end{lem}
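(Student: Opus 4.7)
The plan is to exploit the combinatorial structure of Whitney's decomposition at the fixed scale $\lambda$, combined with a single application of H\"older's inequality and the embedding $\ell^{q'}\hookrightarrow\ell^{\tilde q}$ afforded by the hypothesis $\rp q+\rp{\tilde q}\le 1$.

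First I would use \eqref{lambdaIJ}: every $Q\in\Ol$ has the form $Q=J\times I$ with $|I|=|J|=\lambda$, with $J$ lying to the left of $I$ and $\operatorname{dist}(I,J)\sim\lambda$. Because both intervals have length $\lambda$, the relative position of $J$ with respect to $I$ (measured in units of $\lambda$) takes only finitely many values $k=1,\dots,K$, where $K$ is an absolute constant. Splitting the sum over $\Ol$ according to this shift and applying \eqref{BilStrichQ} termwise reduces \eqref{cor1Est} to bounding, for each fixed shift $k$, a discrete sum of the form
\[
\lambda^{\beta(q,r,\tilde q,\tilde r)}\sum_{I}a_{J_{k}(I)}\,b_{I},\qquad a_{J}=\normQRPDloc{F}{q}{r}{p}{J},\quad b_{I}=\normQRPloc{G}{q'}{r'}{p'}{I},
\]
where $I$ ranges over a disjoint tiling of $\R$ by intervals of length $\lambda$ and $I\mapsto J_{k}(I)$ is an injective translate-type map into another disjoint tiling of $\R$.

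Next I would apply H\"older's inequality in the discrete variable $I$ with exponents $\tilde q'$ and $\tilde q$,
\[
\sum_{I}a_{J_{k}(I)}\,b_{I}\;\le\;\Bigl(\sum_{I}a_{J_{k}(I)}^{\tilde q'}\Bigr)^{1/\tilde q'}\Bigl(\sum_{I}b_{I}^{\tilde q}\Bigr)^{1/\tilde q}.
\]
The hypothesis $\rp q+\rp{\tilde q}\le 1$ is equivalent to $q'\le\tilde q$, so the standard $\ell^{p}$-nesting inequality yields $\bigl(\sum_{I}b_{I}^{\tilde q}\bigr)^{1/\tilde q}\le\bigl(\sum_{I}b_{I}^{q'}\bigr)^{1/q'}$, and this is precisely the step in which the hypothesis is used. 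Because $I\mapsto J_{k}(I)$ is injective and the images are disjoint intervals of length $\lambda$, the disjointness then gives
\[
\sum_{I}b_{I}^{q'}=\int_{\bigcup_{I}I}\normBC{G(t)}{r'}{p'}^{q'}dt\;\le\;\normQRP{G}{q'}{r'}{p'}^{q'},
\]
and analogously $\sum_{I}a_{J_{k}(I)}^{\tilde q'}\le\normQRPD{F}{q}{r}{p}^{\tilde q'}$. Summing over the finitely many shifts $k$ produces \eqref{cor1Est}, with an absolute implicit constant inherited from $K$ and from the constant in \eqref{BilStrichQ}.

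The argument is purely combinatorial once \eqref{BilStrichQ} is in hand; there is no analytic obstacle. The only subtle point is the exponent bookkeeping: the discrete H\"older step forces the pair $(\tilde q',\tilde q)$, and the subsequent $\ell^{\tilde q}\supseteq\ell^{q'}$ embedding requires exactly $q'\le\tilde q$, i.e.\ the stated hypothesis $\rp q+\rp{\tilde q}\le 1$. Everything else is routine passage between discrete $\ell^{p}$ norms over dyadic intervals and the corresponding global $L^{p}(\R)$ norms.
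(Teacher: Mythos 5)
Your proposal is correct and follows essentially the same route as the paper: apply \eqref{BilStrichQ} termwise, exploit the fixed-scale Whitney combinatorics (finitely many admissible shifts of $J$ relative to $I$), and then sum. The only cosmetic difference is that you separate the discrete Hölder step (with the conjugate pair $\tilde q',\tilde q$) from the subsequent $\ell^{q'}\hookrightarrow\ell^{\tilde q}$ nesting, whereas the paper's auxiliary Lemma \ref{lem: 1/p+} packages both into a single generalized Hölder inequality valid whenever $1/p+1/\tilde p\ge 1$; both are the same argument and both use the hypothesis $1/q+1/\tilde q\le 1$ at the identical point.
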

\begin{proof}
In view of (\ref{BilStrichQ})
\begin{equation*}
 \sum_{Q \in \Ol} \abs{B_Q(F, G)} \lesssim \lambda^{\beta(q, r ,{\tilde{q}}, {\tilde{ r}})} \sum_{Q \in \Ol, \, Q=J\times I} \normQRPDloc{F}{q}{r}{p}{J} \normQRPloc{G}{q'}{r'}{p'}{I}.\qquad
\end{equation*}
The claim now follows immediately from Lemma \ref{lem: 1/p+} below.
\end{proof}

\begin{lem}\label{lem: 1/p+}
Suppose $\rp p + \rp {\tilde{p}} \geq 1$. Then                                    
\[
\sum_{Q \in \Ol, \ Q=J\times I}\norm{f}_{L^{\tilde{p}}(J)}\norm{g}_{L^{p}(I)}\leq
\norm{f}_{L^{\tilde{p}}(\R)}\norm{g}_{L^{p}(\R)}.
\]
\end{lem}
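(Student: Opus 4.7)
My plan is to prove the lemma by a single application of H\"older's inequality in the discrete index $Q$, followed by exploiting two facts: the essential disjointness of the projections of the Whitney squares at a fixed scale $\lambda$, and the trivial sequence embedding $\ell^{p}\hookrightarrow\ell^{\tilde p'}$, which is available precisely under the hypothesis $\frac 1 p+\frac 1 {\tilde p}\ge 1$ (equivalently $p\le\tilde p'$).

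First I would use H\"older with the conjugate pair $(\tilde p,\tilde p')$ to split the sum:
\[
\sum_{Q=J\times I\in\Ol}\norm{f}_{L^{\tilde p}(J)}\norm{g}_{L^{p}(I)}
\le\Bigl(\sum_{Q}\norm{f}_{L^{\tilde p}(J)}^{\tilde p}\Bigr)^{1/\tilde p}
   \Bigl(\sum_{Q}\norm{g}_{L^{p}(I)}^{\tilde p'}\Bigr)^{1/\tilde p'}.
\]
Next I would invoke the geometry of the Whitney decomposition used in the excerpt (see Figure~\ref{fig: Whitney}): at a fixed scale $\lambda$ the squares of $\Ol$ sit on a single diagonal strip parallel to $\partial\Omega$ at distance $\sim\lambda$, so as $Q=J\times I$ ranges over $\Ol$ the first-coordinate intervals $J$ are essentially disjoint, and so are the second-coordinate intervals $I$. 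This produces the pointwise-additivity bounds
\[
\sum_{Q}\norm{f}_{L^{\tilde p}(J)}^{\tilde p}=\sum_{Q}\int_{J}|f|^{\tilde p}\,dt\le\norm{f}_{L^{\tilde p}(\R)}^{\tilde p},\qquad
\sum_{Q}\norm{g}_{L^{p}(I)}^{p}\le\norm{g}_{L^{p}(\R)}^{p}.
\]

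To convert the second factor above into the desired $\norm{g}_{L^{p}(\R)}$, I would use the elementary sequence inequality
\[
\Bigl(\sum_{Q}c_{Q}^{\tilde p'}\Bigr)^{1/\tilde p'}\le\Bigl(\sum_{Q}c_{Q}^{p}\Bigr)^{1/p},\qquad c_{Q}\ge 0,
\]
which holds whenever $p\le\tilde p'$ and follows at once from $c_{Q}^{\tilde p'}\le c_{Q}^{p}(\sup_{Q}c_{Q})^{\tilde p'-p}$ together with $\sup_{Q}c_{Q}\le\bigl(\sum_{Q}c_{Q}^{p}\bigr)^{1/p}$. Applied to $c_{Q}=\norm{g}_{L^{p}(I)}$ this gives $\bigl(\sum_{Q}\norm{g}_{L^{p}(I)}^{\tilde p'}\bigr)^{1/\tilde p'}\le\norm{g}_{L^{p}(\R)}$, and inserting everything into the H\"older estimate above proves the lemma.

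The only non-formal point in this plan is the disjointness-of-projections observation, which is an immediate geometric feature of the Whitney family $\Ol$ employed in the text; everything else is a one-line combination of H\"older with a nested-$\ell^{p}$ inequality, and I do not anticipate a real obstacle. It is worth noting that the hypothesis $1/p+1/\tilde p\ge 1$ enters in exactly one place—the $\ell^{p}\hookrightarrow\ell^{\tilde p'}$ embedding—so the shape of the assumption is explained by the proof.
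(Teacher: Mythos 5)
Your proof is correct and follows essentially the same route as the paper's. The paper invokes in one stroke the generalized discrete H\"older inequality $\sum_j |a_j b_j| \le \bigl(\sum_j |a_j|^{\tilde p}\bigr)^{1/\tilde p}\bigl(\sum_j |b_j|^{p}\bigr)^{1/p}$ valid for $1/p + 1/\tilde p \ge 1$, plus the geometric fact about the Whitney family; your argument simply unfolds that inequality into its standard two-step proof (ordinary H\"older with the conjugate pair $(\tilde p,\tilde p')$, then the nested-$\ell^p$ embedding $\ell^{p}\hookrightarrow\ell^{\tilde p'}$). One small point of bookkeeping: the intervals $J$ (respectively $I$) occurring as sides of squares in $\Ol$ are not literally pairwise disjoint --- the paper states more precisely that each dyadic interval occurs as a side of at most two squares in $\Ol$ --- so both your bound and the paper's really deliver $\lesssim$ with an absolute constant rather than $\le$ with constant one, which is all that is used downstream in Lemma~\ref{cor: 1/q+}.
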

\begin{proof}  The lemma follows directly from the inequality
\[
\sum_{j} \abs{a_j b_j} \leq \left( \sum_j |a_j|^{\tilde p} \right)^{\frac 1 {\tilde p}}
\left( \sum_j |b_j|^{p} \right)^{\frac 1 {p}},
\]
which holds in the range $\rp p + \rp {\tilde{p}} \geq 1$, and the fact that for each dyadic interval $I$ there are at most two dyadic squares in $\Ol$ with side $I$.
\end{proof}

We now introduce the bilinear operator $A: \SpaceLqrp{\tilde q'}{\tilde r'}{\tilde p'} \times \SpaceLqrp{q'}{r'}{p'} \rightarrow l^\infty_s$, (for a definition of $l^\infty_s$ see below), defined by the formula
\begin{align*}
 A(F, G) = \left\{ b_\lambda \right\}_{\lambda \in 2^{\Z}} = \left\{ \sum_{Q \in \Ol} \abs{B_Q(F, G)} \right\}_{\lambda \in 2^{\Z}}.
\end{align*}
For instance, this operator is bounded whenever we have property \eqref{BilStrichQ} (see Lemma \ref{lem: local inhom estim KT}), $q\geq \tilde q'$, and $s= - \beta(q,r,\tilde q, \tilde r)$. Clearly, the boundedness of  $A: \SpaceLqrp{\tilde q'}{\tilde r'}{\tilde p'}$ $\times$ $\SpaceLqrp{q'}{r'}{p'} \rightarrow l^1$ implies the boundedness of $B: \SpaceLqrp{\tilde q'}{\tilde r'}{\tilde p'}$ $\times$ $\SpaceLqrp{q'}{r'}{p'} \rightarrow \C$. Thus, in view of Lemma \ref{lem: bilinear}, the estimate
\[
 \norm{\{ b_\lambda\}}_{l^1} \lesssim \normQRPD{F}{q}{r}{p} \normQRP{G}{q'}{r'}{p'}, \quad \forall F \in \spaceLqrptd,\; \forall G \in \spaceLqrp,
\]
implies the boundedness of $W(t): \spaceLqrptd \rightarrow \spaceLqrp$ . We summarize this fact in
\begin{lem} \label{lem: bil A}
The boundedness of the bilinear operator $A: \SpaceLqrp{\tilde q'}{\tilde r'}{\tilde p'} \times \SpaceLqrp{q'}{r'}{p'} \rightarrow l^1$ implies the inhomogeneous \Str estimate
\begin{align*}
\normQRP{W(t)F}{q}{r}{p} &\lesssim \normQRPD{F}{q}{r}{p}, \quad \forall F \in \spaceLqrptd.
\end{align*}
\end{lem}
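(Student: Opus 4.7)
The plan is to unpack the definition of the $l^1$-norm on $A(F,G)$ and to chain together the inequalities already established in the preceding discussion, with the final step being an invocation of Lemma \ref{lem: bilinear}, part (ii). The argument is essentially bookkeeping; there is no substantive new estimate to prove, only a connection between three objects: the bilinear form $B$, its Whitney-piece decomposition $\{B_Q\}$, and the inhomogeneous operator $W(t)$.

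First I would recall that the Whitney decomposition of $\Omega = \{(t,s):s<t\}$ produces the identity
\[
B(F,G) = \sum_{\lambda \in 2^{\Z}} \sum_{Q \in \Ol} B_Q(F,G),
\]
where the inner sums are absolutely summable once one knows boundedness of $A$. Next, the triangle inequality gives
\[
\abs{B(F,G)} \leq \sum_{\lambda \in 2^{\Z}} \sum_{Q \in \Ol} \abs{B_Q(F,G)} = \sum_{\lambda \in 2^{\Z}} b_\lambda = \norm{A(F,G)}_{l^1}.
\]
Thus, assuming that $A : \SpaceLqrp{\tilde q'}{\tilde r'}{\tilde p'} \times \SpaceLqrp{q'}{r'}{p'} \rightarrow l^1$ is bounded, we obtain
\[
\abs{B(F,G)} \lesssim \normQRPD{F}{q}{r}{p} \normQRP{G}{q'}{r'}{p'}
\]
for all $F \in \spaceLqrptd$ and $G \in \spaceLqrpd$.

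Finally, I would invoke Lemma \ref{lem: bilinear}, part (ii), according to which the boundedness of the bilinear form $B$ on $\SpaceLqrp{\tilde q'}{\tilde r'}{\tilde p'} \times \SpaceLqrp{q'}{r'}{p'}$ is equivalent to the boundedness of the operator $W(t)$ from $\spaceLqrptd$ to $\spaceLqrp$. This yields the desired inhomogeneous Strichartz estimate and completes the proof. The only subtle point, which is the main ``obstacle'' if any, is to verify that the interchange of summation implicit in the first displayed identity above is legitimate; but this is automatic because under the hypothesis on $A$ the double series converges absolutely, so Fubini in the counting measure applies without further comment.
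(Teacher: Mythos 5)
Your proposal is correct and follows essentially the same route the paper takes: bound $\abs{B(F,G)}$ by the $l^1$-norm of $A(F,G)$ via the Whitney decomposition and the triangle inequality, then invoke Lemma \ref{lem: bilinear}(ii) to pass from boundedness of the bilinear form $B$ to boundedness of $W(t)$. The paper states this chain of implications in the paragraph immediately preceding Lemma \ref{lem: bil A} and then merely records the conclusion as the lemma, so there is nothing missing in your argument.
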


In our proofs of the inhomogeneous \Str estimates for the KT equation we shall be making a repeated use of this lemma. We shall also need a number of standard results from the theory of Interpolation Spaces which are given in the remaining part of this paragraph. By $L^p=L^p(X; \mathcal B)$ and $L^{p,q}=L^{p,q}(X; \mathcal B) \label{sym: Lorentz}$  we denote the  Lebesgue space and the Lorentz space respectively of vector-valued functions that map a fixed measure space $(X,d\mu)$ to a fixed Banach space $\mathcal B$.

\begin{lem}[{see \cite[ p. 113]{BL}}] \label{prop: real interp 2}
Suppose that $0 < p_0,\, p_1,\, q_0,\, q_1 \leq \infty$, $0< \theta < 1$, and $p_0 \neq p_1$. Then
\begin{equation*}
  \left( L^{p_0, q_0}, L^{p_1, q_1} \right)_{\theta, q} = L^{p, q}, 
\end{equation*}
where $1/p = (1 - \theta)/p_0 + \theta/p_1$.
\end{lem}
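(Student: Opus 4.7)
The plan is to deduce the identification from two standard ingredients in real interpolation theory: the representation of Lorentz spaces as real interpolation spaces of the couple $(L^1,L^\infty)$, and the reiteration (stability) theorem for the $K$-method.

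The first ingredient is the fundamental identification $L^{p,q}=(L^1,L^\infty)_{1-1/p,\,q}$, valid for $1\le p<\infty$ and $0<q\le\infty$, with equivalence of quasi-norms. This is proved by computing the $K$-functional
\[
K(t,f;L^1,L^\infty)=\int_0^t f^*(s)\,ds,
\]
where $f^*$ denotes the non-increasing rearrangement of $f$, and then unwrapping the definition of the real interpolation quasi-norm via the substitution $s=t^p$ to recover the Lorentz quasi-norm $\|f\|_{L^{p,q}}\sim \|t^{1/p}f^*(t)\|_{L^q(dt/t)}$.

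The second ingredient is the reiteration theorem: if $X_i=(A_0,A_1)_{\eta_i,r_i}$ for $i=0,1$ with $\eta_0\neq\eta_1$, then for every $0<\theta<1$ and $0<q\le\infty$,
\[
(X_0,X_1)_{\theta,q}=(A_0,A_1)_{(1-\theta)\eta_0+\theta\eta_1,\,q},
\]
with equivalence of quasi-norms, independently of the secondary parameters $r_0,r_1$. Choosing $A_0=L^1$, $A_1=L^\infty$, $\eta_i=1-1/p_i$, and $r_i=q_i$ identifies $X_i$ with $L^{p_i,q_i}$ by the first step; the hypothesis $p_0\neq p_1$ guarantees $\eta_0\neq\eta_1$. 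Applying reiteration gives
\[
(L^{p_0,q_0},L^{p_1,q_1})_{\theta,q}=(L^1,L^\infty)_{(1-\theta)(1-1/p_0)+\theta(1-1/p_1),\,q}=L^{p,q},
\]
where $1/p=(1-\theta)/p_0+\theta/p_1$, which is the desired conclusion.

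The main obstacle is the reiteration theorem itself, whose proof requires showing that the $K$-functional for the couple $(X_0,X_1)$ is equivalent, on a discrete set of dyadic scales, to a suitably weighted version of the $K$-functional for $(A_0,A_1)$; this estimate is what carries the information that the secondary indices $r_i$ drop out. A secondary technical point is the treatment of the endpoints $p_1=\infty$ or the borderline $p_0=1$, where the Lorentz quasi-norms and the $K$-functional must be handled in their limiting form. Both issues are addressed carefully in Bergh--L\"ofstr\"om, and the plan is to invoke the result from there as a black box rather than reproduce the argument.
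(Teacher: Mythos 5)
The paper does not prove this lemma; it simply cites it from Bergh--L\"ofstr\"om (\cite[p.\ 113]{BL}, Theorem 5.3.1), so there is no ``paper's own proof'' to compare against, and your decision to treat the cited source as a black box is exactly what the paper does. Your sketch of the underlying argument---identify $L^{p,q}$ with $(L^1,L^\infty)_{1-1/p,\,q}$ via the $K$-functional $K(t,f;L^1,L^\infty)=\int_0^t f^*(s)\,ds$, then apply the reiteration theorem and use $p_0\neq p_1$ to ensure the primary parameters differ---is the classical route and is correct for $1\le p_i\le\infty$.

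One precision point: your ``secondary technical'' remark about the endpoints $p_0=1$ and $p_1=\infty$ does not cover the whole range stated in the lemma, which permits $0<p_i<1$. In that regime $\eta_i=1-1/p_i<0$, so $L^{p_i,q_i}$ is not an intermediate space of the couple $(L^1,L^\infty)$ at all, and no limiting-form argument rescues the reduction; one instead needs either the power theorem (Theorem 3.11.6 in Bergh--L\"ofstr\"om, which is in fact how they handle the quasi-normed case) or an interpolation couple $(L^r,L^\infty)$ with $r<\min(p_0,p_1)$. Since the paper only invokes this lemma for exponents in $[1,\infty]$ (see Corollary \ref{cor: hom bc}), this gap is harmless for the paper's purposes, but it is worth knowing that the sub-unital range is genuinely different rather than merely an endpoint subtlety.
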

Suppose that $\mathcal B_0$ and $\mathcal B_1$ are two Banach spaces that are compatible for interpolation.

\begin{lem}[{see the Appendix of \cite{CN}}]  \label{prop: real interp 1}
For every $1 \leq p_0, p_1 < \infty$, $0 < \theta < 1$, $1/p = (1 - \theta)/p_0 + \theta/p_1$ and $p \leq q$ we have
\begin{equation*}
 L^p (X; (\mathcal B_0, \mathcal B_1)_{\theta ,q}) \hookrightarrow (L^{p_0}(X;\mathcal B_0), L^{p_1}(X; \mathcal B_1))_{\theta,q}.
\end{equation*}
\end{lem}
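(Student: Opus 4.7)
My plan is to use the $K$-method of real interpolation. Given $f \in L^p(X; (\mathcal B_0, \mathcal B_1)_{\theta, q})$, I will bound the $K$-functional of $f$ viewed as an element of the sum $L^{p_0}(X; \mathcal B_0) + L^{p_1}(X; \mathcal B_1)$ by the pointwise $K$-functionals $K(t, f(x))$ computed in $\mathcal B_0 + \mathcal B_1$, and then take the weighted $L^q(dt/t)$-norm.

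First I would invoke a measurable selection theorem to produce, for each $t > 0$, functions $a_t(x) \in \mathcal B_0$ and $b_t(x) \in \mathcal B_1$ with $f(x) = a_t(x) + b_t(x)$ satisfying the near-optimal bound
\[
\|a_t(x)\|_{\mathcal B_0} + t \|b_t(x)\|_{\mathcal B_1} \leq 2\, K(t, f(x)).
\]
Testing the $K$-functional in the target pair against this decomposition gives
\[
K(t, f; L^{p_0}(\mathcal B_0), L^{p_1}(\mathcal B_1)) \leq \|a_t\|_{L^{p_0}(\mathcal B_0)} + t \|b_t\|_{L^{p_1}(\mathcal B_1)}.
\]

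Then I would take the weighted $L^q(dt/t)$-norm (with weight $t^{-\theta q}$) on both sides. Because $p \leq q$, Minkowski's integral inequality runs in the direction that commutes the $L^p_x$-norm inside the outer $L^q_t$-norm, yielding
\[
\|f\|_{(L^{p_0}(\mathcal B_0), L^{p_1}(\mathcal B_1))_{\theta, q}} \lesssim \left\| \|t^{-\theta} K(t, f(\cdot))\|_{L^q(dt/t)} \right\|_{L^p(X)} = \|f\|_{L^p(X; (\mathcal B_0, \mathcal B_1)_{\theta, q})}.
\]

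The main obstacle will be that the two terms $\|a_t\|_{L^{p_0}(\mathcal B_0)}$ and $t\|b_t\|_{L^{p_1}(\mathcal B_1)}$ naturally produce $L^{p_0}$- and $L^{p_1}$-norms of the scalar $x \mapsto K(t, f(x))$ rather than the $L^p$-norm that appears in the target. To reconcile this, I would rescale the $K$-decomposition parameter by a weight $w(x)$ depending on $\|f(x)\|_{(\mathcal B_0, \mathcal B_1)_{\theta, q}}$; with the right choice of $w$ the two $L^{p_i}$-contributions balance and log-convexity recombines them into a single $L^p$-norm. A cleaner alternative is the equivalent $J$-method: pick a jointly measurable pointwise $J$-representation $f(x) = \int_0^\infty u(x, s)\, ds/s$ with $\|s^{-\theta} J(s, u(x, s))\|_{L^q(ds/s)}$ essentially equal to $\|f(x)\|_{(\mathcal B_0, \mathcal B_1)_{\theta, q}}$, then plug the same $u$ into the $J$-functional of the target pair and close via Minkowski in the direction $p \leq q$.
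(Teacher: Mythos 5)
The paper offers no proof of this lemma: it is cited to the appendix of \cite{CN}, so there is no in-paper argument to compare your proposal against; what follows assesses the proposal on its own.

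Your strategy --- estimate the $K$- or $J$-functional of the couple $\bigl(L^{p_0}(\mathcal B_0),L^{p_1}(\mathcal B_1)\bigr)$ using a measurably chosen pointwise near-optimal decomposition, rescale the decomposition parameter by a weight $\lambda(x)$ built from $\|f(x)\|_{(\mathcal B_0,\mathcal B_1)_{\theta,q}}$, and close via Minkowski in the direction $p\le q$ --- is the right kind of attack; it is essentially how the Lions--Peetre/Cwikel embeddings are proved. The gap is exactly where you wave at ``the two $L^{p_i}$-contributions balance and log-convexity recombines them into a single $L^p$-norm.'' After rescaling at parameter $t\lambda(x)$ with $\lambda(x)=\|f(x)\|_{(\mathcal B_0,\mathcal B_1)_{\theta,q}}^{\,c}$, the exponent $c=(p/p_0-1)/\theta$ (equivalently $c=(1-p/p_1)/(1-\theta)$, using $1/p=(1-\theta)/p_0+\theta/p_1$) does turn both contributions into an $L^p$-norm of the weight --- \emph{but only after} you push the inner $L^{p_0}_x$- and $L^{p_1}_x$-norms of the pointwise $K$-functional past the outer $L^q(dt/t)$-norm by Minkowski, and that requires $p_0\le q$ \emph{and} $p_1\le q$. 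Since $p$ is a weighted harmonic mean of $p_0$ and $p_1$, the hypothesis $p\le q$ guarantees only $\min(p_0,p_1)\le q$, not $\max(p_0,p_1)\le q$ (for instance $p_0=1$, $p_1=3$, $\theta=1/2$ gives $p=3/2$, and one may take $q=2<p_1$). For that term Minkowski runs the wrong way, so the closing step does not go through as written; and ``log-convexity'' ($\|h\|_{L^p}\le\|h\|_{L^{p_0}}^{1-\theta}\|h\|_{L^{p_1}}^{\theta}$) bounds the $L^p$-norm \emph{by} the $L^{p_i}$-norms, which is the opposite of what you need. Repairing this requires more than a single weight-plus-Minkowski pass --- one has to exploit the structure of the $J$-representation more finely (Cwikel's argument splits the representation; one can also route through the power theorem or a retract into weighted sequence spaces) --- and the measurable-selection step, which you only mention, also needs to be made precise.
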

Denote by $l^p_s \label{sym: lp}$ the space of number sequences with a norm
\begin{align*}
 \norm{\{a\}_{j \in \Z}}_{l^p_s}      &= \left(2^{js}\abs{a_j}^p \right)^{1/p}, \quad 1\leq p<\infty,\\
 \norm{\{a\}_{j \in \Z}}_{l^\infty_s} & = \sup_{j \in \Z} 2^{js}\abs{a_j},  \qquad p=\infty.
\end{align*}

\begin{lem}[See Theorem 5.6.1 in \cite{BL}] \label{lem: interp l}
We have the identity
\begin{align*}
 \left(l^\infty_{s_0}, l^\infty_{s_1} \right)_{\theta, 1} = l^1_{s},
\end{align*}
where $s_0, s_1 \in \R$, $s_0 \neq s_1$ and $s=(1-\theta)s_0+\theta s_1$.
\end{lem}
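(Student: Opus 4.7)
The plan is to establish both inclusions $l^1_s \hookrightarrow (l^\infty_{s_0}, l^\infty_{s_1})_{\theta,1}$ and $(l^\infty_{s_0}, l^\infty_{s_1})_{\theta,1} \hookrightarrow l^1_s$ by direct computation of the Peetre $K$-functional. Write $a = \{a_j\}_{j \in \mathbb{Z}}$ and assume without loss of generality $s_0 > s_1$; set $d = s_0 - s_1 > 0$.

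First I would compute $K(t,a;l^\infty_{s_0},l^\infty_{s_1})$. Because both component norms are suprema, the optimal decomposition $a=a^{(0)}+a^{(1)}$ can be carried out coordinate-wise: for each fixed index $j$, the scalar minimum of $2^{js_0}|a_j^{(0)}|+t\cdot 2^{js_1}|a_j^{(1)}|$ subject to $a_j^{(0)}+a_j^{(1)}=a_j$ equals $|a_j|\min(2^{js_0},t\cdot 2^{js_1})$. Taking the supremum over $j$ gives
\[
K(t,a)=\sup_{j\in\mathbb Z}|a_j|\,2^{js_1}\min(2^{jd},t).
\]

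Second, for the upper bound I would use the trivial pointwise inequality $\sup_j(\,\cdot\,)\leq\sum_j(\,\cdot\,)$ and integrate term by term:
\[
\|a\|_{(\theta,1)}=\int_0^\infty t^{-\theta}K(t,a)\frac{dt}{t}\leq\sum_j|a_j|\,2^{js_1}\int_0^\infty t^{-\theta}\min(2^{jd},t)\,\frac{dt}{t}.
\]
Splitting the inner integral at $t=2^{jd}$ yields the two pieces $2^{js}/(1-\theta)$ and $2^{js}/\theta$, where $s=(1-\theta)s_0+\theta s_1$, so $\|a\|_{(\theta,1)}\leq\theta^{-1}(1-\theta)^{-1}\|a\|_{l^1_s}$.

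Third, for the lower bound I would use a point-selection argument. For each $j\in\mathbb Z$ set $I_j=[2^{jd},2^{(j+1)d}]$; these intervals are pairwise disjoint and cover $(0,\infty)$. On $I_j$ the specific term $g_j(t):=|a_j|\,2^{js_1}\min(2^{jd},t)$ equals the constant $|a_j|\,2^{js_0}$, and $\sup_k g_k(t)\geq g_j(t)$. Hence
\[
\|a\|_{(\theta,1)}\geq\sum_j\int_{I_j}t^{-\theta}|a_j|\,2^{js_0}\,\frac{dt}{t}=c_{d,\theta}\sum_j 2^{js}|a_j|=c_{d,\theta}\|a\|_{l^1_s},
\]
with $c_{d,\theta}=\theta^{-1}(1-2^{-d\theta})>0$, finishing the proof. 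The only delicate point is the third step, where one must verify that the intervals $I_j$ are genuinely disjoint (immediate after passing to the logarithmic variable $\tau=\log_2 t$) and that the extracted constant $c_{d,\theta}$ is positive as long as $s_0\neq s_1$; this hypothesis, present in the statement, is exactly what keeps $d>0$ and prevents degeneracy.
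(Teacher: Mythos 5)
The paper does not prove this lemma; it simply cites Theorem~5.6.1 of Bergh and L\"ofstr\"om. Your proposal supplies the standard self-contained $K$-functional computation, and the argument as a whole is correct. The one place that should be restated is the purported exact formula in step one. The coordinate-wise optimization shows only the lower bound
\[
K(t,a)\;\geq\;\sup_{j\in\mathbb Z} g_j(t),\qquad g_j(t):=|a_j|\,2^{js_1}\min\!\bigl(2^{jd},t\bigr),
\]
because the $K$-functional is an infimum of a \emph{sum of two suprema}: for the natural decomposition $a^{(0)}_j=a_j\,\mathbf 1[2^{jd}\leq t]$, $a^{(1)}_j=a_j\,\mathbf 1[2^{jd}>t]$ one gets
\[
K(t,a)\;\leq\;\sup_{2^{jd}\leq t} g_j(t)\;+\;\sup_{2^{jd}>t} g_j(t),
\]
which can be as large as $2\sup_j g_j(t)$, so the equality $K=\sup_j g_j$ does not hold in general. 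This costs you nothing: your step three uses only the exact lower bound $K\geq\sup_j g_j$, and your step two needs only $K(t,a)\leq\sum_j g_j(t)$, which is immediate from the displayed decomposition (the sum of two suprema of the family $\{g_j\}$ is trivially $\leq\sum_j g_j$). So simply replace the claimed equality by the two-sided equivalence $\sup_j g_j\leq K(t,a)\leq\sum_j g_j$, or $K\sim\sup_j g_j$ with a constant at most $2$, and the rest of your computations (the split of the integral at $t=2^{jd}$, the emergence of the exponent $s=(1-\theta)s_0+\theta s_1$, and the positive constant $c_{d,\theta}=\theta^{-1}(1-2^{-d\theta})$ under $s_0\neq s_1$) are all correct.
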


\begin{lem}[See p. 76 in \cite{BL}] \label{lem: interp T1}
Suppose that $(\mathcal A_0, \mathcal A_1)$, $(\mathcal B_0, \mathcal B_1)$, and $(\mathcal C_0, \mathcal C_1)$ are interpolation couples and that the bilinear operator $T$ acts as a bounded transformation as indicated below:
\begin{align*}
                T: \mathcal A_0 \times \mathcal B_0 \rightarrow \mathcal C_0,\\
	        T: \mathcal A_1 \times \mathcal B_1 \rightarrow \mathcal C_1.
\end{align*}
If $\theta_0 \in (0,1)$, $p,q,r \in [1, \infty]$, and $1+ 1/r = 1/ p + 1/q$, then $T$ also acts as a bounded transformation in the following way:
\begin{align*}
                T: (\mathcal A_0, \mathcal A_1)_{\theta, p} \times (\mathcal B_0, \mathcal B_1)_{\theta, q} \rightarrow (\mathcal C_0, \mathcal C_1)_{\theta, r}.
\end{align*}
\end{lem}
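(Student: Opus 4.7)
My plan is to prove this standard bilinear real interpolation result via the discrete $J$-method, reducing the statement to Young's convolution inequality on $\ell^p(\mathbb{Z})$.

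First, I would recall the discrete $J$-method characterization of real interpolation spaces: an element $a \in (\mathcal A_0, \mathcal A_1)_{\theta, p}$ admits a decomposition $a = \sum_{j \in \Z} a_j$ (convergence in $\mathcal A_0 + \mathcal A_1$) with
\[
 \| a\|_{(\mathcal A_0, \mathcal A_1)_{\theta,p}} \sim \inf \Big\| \{2^{-j\theta} J(2^j, a_j)\} \Big\|_{\ell^p(\Z)},
\]
where $J(t, x) = \max(\|x\|_{\mathcal A_0}, t \|x\|_{\mathcal A_1})$ and the infimum runs over all such decompositions, with the analogous statement for $\mathcal B_i$ and $\mathcal C_i$. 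Accordingly, given $a$ and $b$, I fix decompositions $a = \sum_j a_j$, $b = \sum_k b_k$ with $c_j := J(2^j, a_j)$, $d_k := J(2^k, b_k)$ satisfying $\|\{2^{-j\theta} c_j\}\|_{\ell^p} \lesssim \|a\|$ and $\|\{2^{-k\theta} d_k\}\|_{\ell^q} \lesssim \|b\|$.

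Next I would exploit the bilinearity by writing formally $T(a,b) = \sum_m T_m$, where $T_m = \sum_{j+k=m} T(a_j, b_k)$, and control $T_m$ at scale $2^m$ in $\mathcal C_0 + \mathcal C_1$. Since $\|a_j\|_{\mathcal A_0} \leq c_j$ and $\|a_j\|_{\mathcal A_1} \leq 2^{-j} c_j$, and similarly for $b_k$, the two hypotheses on $T$ give $\|T(a_j,b_k)\|_{\mathcal C_0} \lesssim c_j d_k$ and $\|T(a_j,b_k)\|_{\mathcal C_1} \lesssim 2^{-j-k} c_j d_k$. For $j+k=m$ this yields $J(2^m, T(a_j,b_k)) \lesssim c_j d_k$, hence
\[
 2^{-m\theta} J(2^m, T_m) \lesssim \sum_{j+k = m} \big( 2^{-j\theta} c_j \big) \big( 2^{-k\theta} d_k \big).
\]
The right-hand side is a discrete convolution, so Young's inequality with $1 + 1/r = 1/p + 1/q$ gives
\[
 \big\|\{2^{-m\theta} J(2^m, T_m)\}\big\|_{\ell^r(\Z)} \lesssim \|a\|_{(\mathcal A_0,\mathcal A_1)_{\theta,p}} \|b\|_{(\mathcal B_0,\mathcal B_1)_{\theta,q}},
\]
which, by the $J$-method characterization for $(\mathcal C_0, \mathcal C_1)_{\theta, r}$, is exactly the desired bilinear bound.

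The only subtle points are convergence of the double sum $\sum_{j,k} T(a_j, b_k)$ in $\mathcal C_0 + \mathcal C_1$, and equivalence between $K$- and $J$-method norms (used implicitly to justify the decomposition and to identify the output space). Both are standard once one works on the dense subspace $\mathcal A_0 \cap \mathcal A_1$, $\mathcal B_0 \cap \mathcal B_1$ where the decompositions are essentially finite, and then passes to the general case by continuity. The main nuance, rather than obstacle, is the restriction $1 + 1/r = 1/p + 1/q$: it arises as the exact condition that converts the convolution-type estimate above into a bounded trilinear pairing on $\ell^p \times \ell^q \times \ell^{r'}$, which is why this is the sharp Young-type constraint for bilinear real interpolation.
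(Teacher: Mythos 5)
The paper states this lemma as a quoted result from \cite{BL} (p.\ 76) and supplies no proof of its own, so there is no internal argument to compare against. Your $J$-method decomposition combined with discrete Young's convolution inequality is correct and is exactly the standard argument underlying the cited reference.
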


\begin{lem}[See pp. 76-77 in \cite{BL}] \label{lem: interp T}
Suppose that $(\mathcal A_0, \mathcal A_1)$, $(\mathcal B_0, \mathcal B_1)$, and $(\mathcal C_0, \mathcal C_1)$ are interpolation couples and that the bilinear operator $T$ acts as a bounded transformation as indicated below:
\begin{align*}
                T: \mathcal A_0 \times \mathcal B_0 \rightarrow \mathcal C_0,\\
	        T: \mathcal A_0 \times \mathcal B_1 \rightarrow \mathcal C_1,\\
		T: \mathcal A_1 \times \mathcal B_0 \rightarrow \mathcal C_1.
\end{align*}
If $\theta_0, \theta_1 \in (0,1)$ and $p,q,r \in [1, \infty]$ are such that $1/ p+ 1/q \geq 1$, then $T$ also acts as a bounded transformation in the following way:
\begin{align*}
                T: (\mathcal A_0, \mathcal A_1)_{\theta_0, pr} \times (\mathcal B_0, \mathcal B_1)_{\theta_1, qr} \rightarrow (\mathcal C_0, \mathcal C_1)_{\theta_0+\theta_1, r}.
\end{align*}
\end{lem}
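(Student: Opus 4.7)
The plan is to reduce the bilinear interpolation to a (bi)linear convolution-type estimate for weighted sequence spaces via the discrete $J$-method representation of real interpolation. First I would represent $a\in(\mathcal A_0,\mathcal A_1)_{\theta_0,pr}$ as a series $a=\sum_{j\in\Z}u_j$ with $u_j\in\mathcal A_0\cap\mathcal A_1$, so that
\[
\Lambda_j:=2^{-j\theta_0}J(2^j,u_j;\mathcal A_0,\mathcal A_1),\qquad J(t,x)=\max(\norm{x}_{\mathcal A_0},\,t\norm{x}_{\mathcal A_1}),
\]
lies in $\ell^{pr}(\Z)$ with norm comparable to $\norm{a}_{(\mathcal A_0,\mathcal A_1)_{\theta_0,pr}}$, and analogously decompose $b=\sum_{k\in\Z}v_k$ with corresponding weights $M_k=2^{-k\theta_1}J(2^k,v_k;\mathcal B_0,\mathcal B_1)\in\ell^{qr}(\Z)$. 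Then formally $T(a,b)=\sum_{j,k}T(u_j,v_k)$, and the task reduces to controlling this double series in the $J$-representation of $(\mathcal C_0,\mathcal C_1)_{\theta_0+\theta_1,r}$.

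The key step is to combine the three boundedness hypotheses into a single bound on each $T(u_j,v_k)$. The endpoint $T:\mathcal A_0\times\mathcal B_0\to\mathcal C_0$ gives $\norm{T(u_j,v_k)}_{\mathcal C_0}\lesssim J(2^j,u_j)J(2^k,v_k)$, while $T:\mathcal A_0\times\mathcal B_1\to\mathcal C_1$ and $T:\mathcal A_1\times\mathcal B_0\to\mathcal C_1$ together produce
\[
\norm{T(u_j,v_k)}_{\mathcal C_1}\lesssim \min\!\bigl(2^{-j},2^{-k}\bigr)\,J(2^j,u_j)\,J(2^k,v_k).
\]
Grouping terms by the product scale $\ell=j+k$ and writing $w_\ell=\sum_{j+k=\ell}T(u_j,v_k)$, one finds that $w_\ell$ admits, with $2^\ell$ playing the role of the $J$-parameter in $(\mathcal C_0,\mathcal C_1)$, the majorization
\[
2^{-\ell(\theta_0+\theta_1)}J(2^\ell,w_\ell;\mathcal C_0,\mathcal C_1)\ \lesssim\ \sum_{j+k=\ell}\Lambda_j M_k.
\]
This is the discrete convolution $(\Lambda*M)_\ell$.

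The final step is to bound this convolution in $\ell^r$. By Young's inequality,
\[
\norm{\Lambda*M}_{\ell^r}\ \lesssim\ \norm{\Lambda}_{\ell^{pr}}\norm{M}_{\ell^{qr}}
\]
provided $\tfrac{1}{pr}+\tfrac{1}{qr}\leq 1+\tfrac{1}{r}$, which rearranges precisely to the hypothesis $\tfrac{1}{p}+\tfrac{1}{q}\geq 1$. Invoking the $J$-method characterization of $(\mathcal C_0,\mathcal C_1)_{\theta_0+\theta_1,r}$ then yields $T(a,b)\in(\mathcal C_0,\mathcal C_1)_{\theta_0+\theta_1,r}$ with the advertised bound. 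The main obstacle is the splitting induced by $\min(2^{-j},2^{-k})$: in the regime $j\le k$ one uses the hypothesis $T:\mathcal A_1\times\mathcal B_0\to\mathcal C_1$ while in $j\ge k$ one uses $T:\mathcal A_0\times\mathcal B_1\to\mathcal C_1$, and careful bookkeeping of the resulting weights is needed to see that in both regimes the output $J$-weight is exactly $2^{\ell(\theta_0+\theta_1-1)}$, i.e.\ that the parameter of the target interpolation space is forced to be $\theta_0+\theta_1$ (and no further refinement is possible precisely because the fourth corner $\mathcal A_1\times\mathcal B_1$ is unavailable).
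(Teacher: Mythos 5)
Your overall strategy (discrete $J$-method decomposition, combine the three corners into estimates on $\|T(u_j,v_k)\|_{\mathcal C_0}$ and $\|T(u_j,v_k)\|_{\mathcal C_1}$, then resum) is sound, but the key step — the grouping by $\ell=j+k$ and the resulting convolution majorization — is incorrect, and the subsequent Young's-inequality step compounds the error.

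\textbf{The majorization fails.} The natural $J$-scale for $T(u_j,v_k)$ is $t=2^{\max(j,k)}$, since $\|T(u_j,v_k)\|_{\mathcal C_1}\lesssim 2^{-\max(j,k)}J(2^j,u_j)J(2^k,v_k)$ and $\|T(u_j,v_k)\|_{\mathcal C_0}\lesssim J(2^j,u_j)J(2^k,v_k)$. Evaluating the $J$-functional at $t=2^\ell$ with $\ell=j+k$ costs you a factor $\max(1,2^{\ell-\max(j,k)})=2^{\max(0,\min(j,k))}$, and after dividing by $2^{\ell(\theta_0+\theta_1)}$ and rewriting in terms of $\Lambda_j M_k$ the resulting coefficient has exponent $\max(0,\min(j,k))-j\theta_1-k\theta_0$. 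At $j=k=N$ this is $N(1-\theta_0-\theta_1)$, which blows up (recall $\theta_0+\theta_1<1$ is forced for the target space to be nondegenerate). So the claimed inequality $2^{-\ell(\theta_0+\theta_1)}J(2^\ell,w_\ell)\lesssim(\Lambda*M)_\ell$ is simply false, and the sum is not a convolution.

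\textbf{The Young's-inequality step is also wrong on its own terms.} On $\ell^p(\Z)$ Young requires $1/a+1/b\geq 1+1/r$ (the inequality must point \emph{this} way, because $\ell^a\hookrightarrow\ell^{a'}$ for $a\leq a'$). Setting $a=pr$, $b=qr$ you would need $1/p+1/q\geq r+1$, which is essentially never satisfiable; in any case neither the direction of the inequality nor the algebraic rearrangement to $1/p+1/q\geq 1$ in your text is correct.

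\textbf{What repairs it.} Group instead by $m=\max(j,k)$, so that $J(2^m,T(u_j,v_k))\lesssim J(2^j,u_j)J(2^k,v_k)$ with no loss. Writing $d=m-\min(j,k)\geq 0$, one finds
\[
2^{-m(\theta_0+\theta_1)}J(2^m,w_m)\lesssim \sum_{d\geq 0}2^{-d\theta_0}\Lambda_{m-d}M_m+\sum_{d>0}2^{-d\theta_1}\Lambda_m M_{m-d},
\]
with genuine exponential decay in $d$ because $\theta_0,\theta_1>0$. Now take the $\ell^r_m$-norm: apply Minkowski in $d$ (the geometric series converges), and for each fixed $d$ apply H\"older to the pointwise product $\Lambda_{m-d}M_m$: choose $1/s+1/t=1/r$ with $s\geq pr$, $t\geq qr$, which is possible exactly when $1/(pr)+1/(qr)\geq 1/r$, i.e.\ $1/p+1/q\geq 1$. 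This recovers the hypothesis in the right place and for the right reason — as a condition for H\"older on the product, not for Young on a convolution. The rest of your outline (equivalence of $J$-representations with the interpolation norms, and justifying $T(a,b)=\sum_{j,k}T(u_j,v_k)$ by density) then goes through.
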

\section{Proof of \Str estimates for admissible exponents} \label{sect: proof 1}

In this paragraph we prove only the validity of the estimates in Theorem \ref{thm: Strich admiss KT}. The investigation of their sharpness shall be made in Section \ref{sect: counterex} by means of counterexamples.

Our plan is the following one. We  shall first prove the homogeneous estimate
\begin{align*}
  \normQRP{U(t)f}{q}{r}{r'} \lesssim& \normP{f}{2},
\end{align*}
via the corresponding estimate for the $TT^*$-operator for non-endpoint exponent triplets. Then in view of the invariance \eqref{power transform} we obtain
\begin{align*}
  \normQRP{U(t)f}{q}{r}{p} \lesssim& \normP{f}{a}, \quad 1\leq a \leq \infty, \; q>a.
\end{align*}
By duality and composition this implies the estimate
\begin{align*}
  \normQRP{W(t)F}{q}{r}{p} \lesssim&  \normQRP{F}{\tilde q'}{\tilde r'}{\tilde p'}
\end{align*}
whenever $a=\HM(p,r)=\HM(\tilde p', \tilde r')$ and $\tript$ is another non-endpoint KT-admissible exponent triplet.

\begin{proof}
In view of the decay estimate
\begin{align*}
    \normBC{U(t)f}{r}{r'} \lesssim \rp {\abs{t}^{\beta(r)}} \normBC{f}{r'}{r}, \quad 2\leq r \leq \infty,
\end{align*}
where $\beta(r) = n(1-2/r)$, we have
\begin{align*}
    \normBC{TT^*F}{r}{r'} \lesssim \Int \normBC{U(t-s)F(s)}{r}{r'} ds \lesssim \Int \frac {\normBC{F(s)}{r'}{r}} {\abs{t-s}^{\beta(r)}}  ds.
\end{align*}
We take the $L^q$-norm in $t$ and in view of the Hardy-Littlewood-Sobolev (HLS) theorem of fractional integration, see \cite[pp. 228-229]{BS}, \cite{S}, we obtain
\begin{align*}
    \normQRP{TT^*F}{q}{r}{r'} \lesssim \normQRP{F}{q'}{r'}{r},
\end{align*}
whenever $0<\beta(r)<1$, $1 + 1/q = 1/q' + \beta(r)$.  The latter conditions are equivalent to $2 < r < r^*(2)$, $1/q+n/r=n/2$. The left endpoint $r=2$ follows trivially from the transport estimate (\ref{eq: tran est 2}).
\end{proof}

The right endpoint $r = r^*(2)$ remains unresolved in the context of the KT equation, unlike that of the wave and the \Sch equations, where it has been resolved (in the positive) by Keel and Tao \cite{KT} (1997). In the setting of the inhomogeneous estimates, the ``double endpoint'' for which both exponent triplets are endpoint remains unresolved. However, if $\trip$ is non-endpoint and $\tript$ is endpoint the corresponding inhomogeneous estimate is still non-endpoint (since in such case $q > \tilde q'$) and thus holds true in view of Theorem \ref{thm: glob inhom est KT} whose proof will be given in Section \ref{sect: proof glob inhom}.

\section{Local inhomogeneous estimates} \label{sect: loc inhom est KT}

In order to go beyond the ``standard'' \Str estimates for the KT equation proved in the previous section we shall adapt and apply techniques pioneered by Foschi \cite{F}, and Keel and Tao \cite{KT}. We have also considered the works by Vilela \cite{V} and Taggart \cite{Tg}.

Our goal is to find the maximal range of estimates where we have the scaling property
\begin{align} \label{eq: local scal KT}
 \normLL{W(t)[\chi_{\lambda J}F]} \lesssim  \lambda^{\bbeta}\normRL{F}, \quad \forall \lambda >0,
\end{align}
for any two unit intervals $I$ and $J$ separated by a unit distance and any $F \in \SpaceLqrptdloc{q}{r}{p}{\R}$, where $\chi_{\lambda J}$ denotes the characteristic function of the rescaled interval $\lambda J$. Note that \eqref{eq: local scal KT} is equivalent to \eqref{BilStrichQ}.

\begin{lem}\label{lem: local estim 1 KT}
Estimate \eqref{eq: local scal KT}  holds for any two non-endpoint KT-admissible triplets $\trip$ and $\tript$ with $a$ $=$ $\tilde a'$. 
\end{lem}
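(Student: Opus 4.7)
The plan is to deduce \eqref{eq: local scal KT} from the global inhomogeneous Strichartz estimate of Theorem \ref{thm: Strich admiss KT} by a pure scaling argument. The hypothesis $a = \tilde a'$ unpacks to $\HM(p,r) = \HM(\tilde p',\tilde r')$ — just dualise $1/\HM(\tilde p,\tilde r) = \tfrac 1 2(1/\tilde p + 1/\tilde r)$ to obtain $1/\HM(\tilde p,\tilde r)' = \tfrac 1 2(1/\tilde p' + 1/\tilde r')$ — so together with the non-endpoint KT-admissibility of both $\trip$ and $\tript$ we are in the range handled by Theorem \ref{thm: Strich admiss KT}, which provides
\[
 \normQRP{W(t)F}{q}{r}{p} \;\lesssim\; \normQRP{F}{\tilde q'}{\tilde r'}{\tilde p'}, \qquad F \in \spaceLqrptd.
\]
Specialising to $F$ supported in the unit interval $J$ and restricting the output-side norm to the unit interval $I$ yields \eqref{eq: local scal KT} for $\lambda = 1$, with a constant independent of the placement of $I$ and $J$ by translation invariance of $W(t)$.

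For general $\lambda > 0$ I would use Lemma \ref{lem: scaling}. If $F$ is supported in $\lambda J$, write $F(t,x,v) = \widetilde F(t/\lambda, x/\lambda, v)$ with $\widetilde F$ supported in $J$; the first scaling identity in Lemma \ref{lem: scaling} (rescaling only $t$ and $x$, leaving $v$ fixed) gives
\[
 W(t)F(x,v) \;=\; \lambda\,(W\widetilde F)(t/\lambda,\,x/\lambda,\,v).
\]
A direct change of variables in the mixed-norm integrals then produces
\begin{align*}
 \normLL{W(t)F} &\;=\; \lambda^{\,1 + 1/q + n/r}\,\norm{W\widetilde F}_{L^q(I;\,L^r_x L^p_v)},\\
 \normRL{F}     &\;=\; \lambda^{\,1/\tilde q' + n/\tilde r'}\,\norm{\widetilde F}_{L^{\tilde q'}(J;\,L^{\tilde r'}_x L^{\tilde p'}_v)}.
\end{align*}
Inserting the unit-scale estimate applied to $\widetilde F$ into the first line and balancing against the second leaves an overall factor of $\lambda^{\,1 + 1/q + n/r - 1/\tilde q' - n/\tilde r'}$, which after writing $1/\tilde q' = 1 - 1/\tilde q$ and $1/\tilde r' = 1 - 1/\tilde r$ collapses to $\lambda^{\,1/q + 1/\tilde q - n(1 - 1/r - 1/\tilde r)} = \lambda^{\bbeta}$, as required.

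I do not foresee a substantive obstacle. The dispersive content of the statement has already been absorbed into Theorem \ref{thm: Strich admiss KT}, and what remains is essentially dimensional bookkeeping: the only genuine check is that the scaling exponent produced by the change of variables coincides with $\beta(q,r,\tilde q,\tilde r)$ in \eqref{eq: beta KT}, which is forced by the homogeneity built into KT-admissibility. The non-endpoint hypothesis on both triplets is used solely to keep us inside the established range of Theorem \ref{thm: Strich admiss KT}; the unit-distance separation of $I$ from $J$ plays no role in this particular lemma, although it will become essential in the subsequent lemmas of Section \ref{sect: loc inhom est KT} that extend \eqref{eq: local scal KT} beyond the admissible range.
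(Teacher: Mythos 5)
Your argument is correct and rests on the same foundation as the paper's (the global estimate of Theorem \ref{thm: Strich admiss KT}), but it takes a longer route than necessary and misses the observation that makes the paper's proof a one-liner. Under the hypotheses of the lemma the scaling exponent actually \emph{vanishes}: KT-admissibility of $\trip$ gives $1/q + n/r = n/a$, KT-admissibility of $\tript$ gives $1/\tilde q + n/\tilde r = n/\tilde a$, and $a = \tilde a'$ means $1/a + 1/\tilde a = 1$, whence
\[
\frac 1 q + \frac 1 {\tilde q}
= n\left(\frac 1 a + \frac 1 {\tilde a}\right) - \frac n r - \frac n {\tilde r}
= n\left(1 - \frac 1 r - \frac 1 {\tilde r}\right),
\]
i.e.\ $\beta(q,r,\tilde q,\tilde r)=0$. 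So the factor $\lambda^{\beta}$ in \eqref{eq: local scal KT} is identically $1$, and the local estimate follows for every $\lambda>0$ directly by restricting the global estimate $\normQRP{W(t)[\chi_{\lambda J}F]}{q}{r}{p} \lesssim \normQRPtd{\chi_{\lambda J}F}{q}{r}{p}$ to $\lambda I$ on the left and $\lambda J$ on the right; no change of variables is required. Your scaling computation via Lemma \ref{lem: scaling} is a correct, self-contained confirmation that the change-of-variables exponent matches $\bbeta$ algebraically (and that part of your reasoning will be genuinely needed in the later lemmas of this section where $\beta \neq 0$), but for this particular lemma the whole detour is avoidable once one notices $\beta=0$. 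Your closing remarks about the role of the non-endpoint hypothesis and the irrelevance of the $I$--$J$ separation for this lemma are both accurate.
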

\begin{proof} The proof follows trivially from Theorem \ref{thm: Strich admiss KT} due to the fact that $\beta(q,r,\tilde q, \tilde r)$ $=$ $0$ under the hypothesis of the lemma.
\end{proof}

\begin{lem}\label{lem: local estim 2 KT}
Estimate \eqref{eq: local scal KT}  holds with $\trip=(\infty, r, p)$ and $\tript=(\infty, p', r')$, where $1 \leq p \leq r \leq \infty$.
\end{lem}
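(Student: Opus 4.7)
The plan is to combine Minkowski's inequality in the time variable with the decay estimate (Corollary \ref{cor: dec est}) and exploit the geometric fact that $|t-s| \sim \lambda$ whenever $t \in \lambda I$ and $s \in \lambda J$, which follows from \eqref{lambdaIJ}.

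First I would compute the scaling exponent to see what target estimate is being claimed. With $\trip = (\infty, r, p)$ and $\tript = (\infty, p', r')$, the left-hand space becomes $L^\infty(\lambda I; L^r_x L^p_v)$, the right-hand space becomes $L^1(\lambda J; L^p_x L^r_v)$, and \eqref{eq: beta KT} gives
\[
\beta(\infty, r, \infty, p') = -n\left(\frac{1}{p} - \frac{1}{r}\right),
\]
so the target is
\[
\norm{W(t)[\chi_{\lambda J}F]}_{L^\infty(\lambda I; L^r_x L^p_v)} \lesssim \lambda^{-n(1/p-1/r)} \norm{F}_{L^1(\lambda J; L^p_x L^r_v)}.
\]

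Next I would write $W(t)[\chi_{\lambda J}F](t) = \int_{\lambda J} U(t-s) F(s)\, ds$ for $t \in \lambda I$, apply Minkowski's inequality in $s$ to pass the $L^r_x L^p_v$ norm inside the time integral, and then estimate each integrand using the decay estimate
\[
\norm{U(t-s)F(s)}_{L^r_x L^p_v} \leq \frac{1}{|t-s|^{n(1/p-1/r)}} \norm{F(s)}_{L^p_x L^r_v},
\]
which is valid by Corollary \ref{cor: dec est} since the hypothesis $1 \leq p \leq r \leq \infty$ is exactly the range in which the decay estimate holds.

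Finally, the separation assumption \eqref{lambdaIJ} implies $|t-s| \sim \lambda$ uniformly for $(t,s) \in \lambda I \times \lambda J$, so the kernel $|t-s|^{-n(1/p-1/r)}$ can be pulled out of the integral as $\lambda^{-n(1/p-1/r)}$, leaving the bare $L^1$ norm of $\norm{F(s)}_{L^p_x L^r_v}$ over $\lambda J$ — which is exactly the right-hand side. Taking the supremum in $t \in \lambda I$ then gives the claim. There is no real obstacle here: this case is essentially just the dispersive estimate applied pointwise in $s$, and it does not require any interpolation or duality — the only thing one has to be careful about is checking the endpoint cases $p=1$ and $r=\infty$ (where Minkowski still applies and the decay estimate remains valid as \eqref{eq: disp est 2}) and the degenerate case $p=r$ (where the exponent $\beta$ vanishes and the estimate reduces to the transport estimate composed with Minkowski).
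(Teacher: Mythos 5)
Your proof is correct and follows essentially the same route as the paper's: the paper's (two-line) proof likewise estimates $\sup_{t\in\lambda I}\norm{W(t)[\chi_{\lambda J}F]}_{L^r_xL^p_v}$ by passing the spatial norm inside the $\tau$-integral, invoking the decay estimate \eqref{DispEst}, and using $|t-\tau|\sim\lambda$ to pull out the factor $\lambda^{\beta(\infty,r,\infty,p')}=\lambda^{-n(1/p-1/r)}$. Your explicit invocation of Minkowski's inequality and the remarks on the endpoint cases $p=1$, $r=\infty$, $p=r$ merely spell out steps the paper treats as implicit; there is no substantive difference.
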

\begin{proof}
Due to the decay estimate (\ref{DispEst}) we have that
\begin{align*}
    \sup_{t \in \lambda I} \norm{W(t)[\chi_{\lambda J}F]}_{ L^{r}_xL^{p}_v} &\lesssim \sup_{t \in \lambda I}\int_{\lambda J} \frac                    {\norm{F(\tau)}_{ L^{p}_xL^{r}_v}}             {\abs{t-\tau}^\betta}d\tau \\
    &\lesssim  \lambda^{\beta(\infty, r, \infty, p')}\norm{F}_{L^1(\lambda J; L^{p}_xL^{r}_v)}. \qedhere
\end{align*}
\end{proof}

\begin{lem}\label{lem: local estim 3 KT}
Whenever $\trip$ and $\tript$ are exponent triplets for which estimate \eqref{eq: local scal KT} holds, we have that \eqref{eq: local scal KT} also holds with $(Q,r,p)$ and $(\tilde Q, \tilde r, \tilde p)$, where $1\leq Q \leq q$, $1 \leq \tilde{Q} \leq \tilde{q}$.
\end{lem}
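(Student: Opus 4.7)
The plan is to reduce the new estimate (with smaller temporal exponents $Q\leq q$ and $\tilde Q\leq \tilde q$) to the hypothesized one by two applications of H\"older's inequality in the time variable, exploiting the fact that $\chi_{\lambda J}F$ is supported on a bounded interval and that $W(t)[\chi_{\lambda J}F]$ is evaluated on a bounded interval (by the localized nature of \eqref{BilStrichQ} / \eqref{eq: local scal KT}).

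First, since $|\lambda I|=\lambda$ and $Q\leq q$, H\"older's inequality gives the embedding
\begin{equation*}
\norm{W(t)[\chi_{\lambda J}F]}_{L^{Q}(\lambda I; L^{r}_xL^{p}_v)} \leq \lambda^{\frac 1 Q - \frac 1 q}\norm{W(t)[\chi_{\lambda J}F]}_{L^{q}(\lambda I; L^{r}_xL^{p}_v)}.
\end{equation*}
Symmetrically, since $\tilde Q\leq \tilde q$ we have $\tilde q'\leq \tilde Q'$, and H\"older's inequality on $\lambda J$ yields
\begin{equation*}
\norm{F}_{L^{\tilde q'}(\lambda J; L^{\tilde r'}_xL^{\tilde p'}_v)} \leq \lambda^{\frac 1 {\tilde q'} - \frac 1 {\tilde Q'}}\norm{F}_{L^{\tilde Q'}(\lambda J; L^{\tilde r'}_xL^{\tilde p'}_v)}.
\end{equation*}
Chaining these two H\"older estimates together with the hypothesized estimate \eqref{eq: local scal KT} for the triplets $\trip$ and $\tript$ produces an inequality of the desired form whose scaling factor is $\lambda$ to the power
\begin{equation*}
\left(\tfrac 1 Q - \tfrac 1 q\right) + \beta(q,r,\tilde q,\tilde r) + \left(\tfrac 1 {\tilde q'} - \tfrac 1 {\tilde Q'}\right).
\end{equation*}

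The only thing to verify is that this combined exponent is exactly $\beta(Q,r,\tilde Q,\tilde r)$. A direct computation using $\frac 1 {\tilde q'}-\frac 1 {\tilde Q'}=\frac 1 {\tilde Q}-\frac 1 {\tilde q}$ shows
\begin{equation*}
\tfrac 1 Q - \tfrac 1 q + \tfrac 1 q + \tfrac 1 {\tilde q} - n\!\left(1-\tfrac 1 r-\tfrac 1 {\tilde r}\right) + \tfrac 1 {\tilde Q} - \tfrac 1 {\tilde q} = \tfrac 1 Q + \tfrac 1 {\tilde Q} - n\!\left(1-\tfrac 1 r-\tfrac 1 {\tilde r}\right),
\end{equation*}
which is precisely $\beta(Q,r,\tilde Q,\tilde r)$, completing the argument. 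There is essentially no obstacle here; the lemma is a soft consequence of the definition of $\beta$ in \eqref{eq: beta KT} together with the fact that the spatial exponents $r,p,\tilde r,\tilde p$ (on which $\beta$'s $r,\tilde r$-dependence rests) are unchanged.
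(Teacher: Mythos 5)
Your proof is correct and takes essentially the same approach as the paper: two applications of H\"older's inequality in the time variable — one on the output $W(t)[\chi_{\lambda J}F]$ over $\lambda I$ to lower $q$ to $Q$, one on the input $F$ over $\lambda J$ to raise $\tilde q'$ to $\tilde Q'$ — followed by the power counting showing the combined $\lambda$-exponent is $\beta(Q,r,\tilde Q,\tilde r)$.
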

\begin{proof} A trivial application of H\"older's inequality
\begin{equation*}
 \begin{split}
  \norm{W(t)[\chi_{\lambda J}F]}_{L^{Q}(\lambda I; L^{r}_xL^{p}_v)} \lesssim
  \lambda^{\rp Q - \rp q} \norm{W(t)[\chi_{\lambda J}F]}_{L^{q}(\lambda I; L^{r}_xL^{p}_v)} \qquad\\
  \lesssim \lambda^{\beta(Q,r,\tilde q , \tilde r)}\norm{F}_{L^{\tilde{q}'}(\lambda J; L^{\tilde{r}'}_xL^{\tilde{p}'}_v)}
  \lesssim  \lambda^{\beta(Q,r,\tilde Q , \tilde r)}\norm{F}_{L^{\tilde{Q}'}(\lambda J; L^{\tilde{r}'}_xL^{\tilde{p}'}_v)}. \qedhere
 \end{split}
\end{equation*}
\end{proof}

Let us define the range of validity of the local estimates \eqref{eq: local scal KT} as the set $\mathcal{E} \subset \R^6$ consisting of exponent vectors $\triptrip$ in $\R^6$ that correspond to valid estimates \eqref{eq: local scal KT}. We shall only find the convex hull $\mathcal{E}_0$ $\subseteq \mathcal{E}$ of the points in $\R^6$ that correspond to the estimates in the three lemmas above. The question whether $\mathcal{E}_0$ $= \mathcal{E}$ remains open. 

\begin{lem}[Local inhomogeneous estimates]\label{lem: local inhom estim KT}
Estimate \eqref{eq: local scal KT}  holds whenever the exponent triplets $\trip$, $\tript$ satisfy the following conditions
\begin{align}
 0 \leq \rp q, \rp {\tilde{q}} \leq 1, & \quad 0 < \rp p, \rp {\tilde{p}}, \rp r, \rp {\tilde{r}} \leq 1,  \\
\rp r \leq \rp p, \quad \rp {\tilde r} \leq \rp {\tilde p},& \quad \HM(p,r)=\HM(\tilde p', \tilde r'),\\
\qconds, &\quad \qcondts, \qquad \label{cond: loc inhom 2 KT}\\
\frac {n -1}{p'} < \frac {n}{\tilde{r}}, &\quad \frac {n-1}{\tilde{p}'} < \frac {n}{r},\qquad\qquad\qquad \label{cond: loc inhom <} 
\end{align}
or if the point $(1/q, 1/r, 1/p, 1/\tilde q, 1/ \tilde r, 1/ \tilde p)$ lies inside one of the ``cubic'' sets in $\R^6$ below
\begin{equation} \label{omitted points in lemma}
  \begin{split}
   \left(\kappa,0,\mu,\nu,1 - \mu,1\right), \qquad 0 \leq \kappa, \mu, \nu \leq 1, \qquad\\
   \left(\kappa,1-\mu,1,\nu,0,\mu \right),  \qquad 0 \leq \kappa, \mu, \nu \leq 1.\qquad
  \end{split}
\end{equation}
\end{lem}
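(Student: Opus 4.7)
The plan is to realize the claimed region as the convex hull (in the reciprocal exponent space $\R^6$) of the exponent points covered by Lemmas \ref{lem: local estim 1 KT}--\ref{lem: local estim 3 KT}, and to use bilinear real interpolation (Lemmas \ref{lem: interp T1}, \ref{lem: interp T}) applied to the localized bilinear form $B_Q(F,G)$ from \eqref{BQ}. The key structural fact making everything work is that the scaling exponent $\beta(q,r,\tilde q,\tilde r)=\bbeta$ depends \emph{linearly} on the reciprocals of the Lebesgue indices, so the correct power $\lambda^{\beta}$ in \eqref{eq: local scal KT} is automatically preserved under convex combinations of valid estimates; likewise, the constraint $\HM(p,r)=\HM(\tilde p',\tilde r')$ is linear in reciprocals and hence preserved.

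First I would catalogue the three input families. Lemma \ref{lem: local estim 1 KT} furnishes the interior of the KT-admissible stratum with $a=\tilde a'$, on which $\beta=0$. Lemma \ref{lem: local estim 2 KT} furnishes the ``dispersive'' diagonal $(\infty,r,p,\infty,p',r')$ for $1\leq p\leq r\leq \infty$, on which $\beta=-n(1/p-1/r)$ (note that the HM condition holds trivially there because $\HM(p,r)=\HM(r,p)$). Lemma \ref{lem: local estim 3 KT} provides Hölder monotonicity: once an estimate is established at $(q,r,p,\tilde q,\tilde r,\tilde p)$, the same estimate is available at any $(Q,r,p,\tilde Q,\tilde r,\tilde p)$ with $Q\leq q$, $\tilde Q\leq\tilde q$.

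Next I would interpolate Lemma \ref{lem: local estim 1 KT} against Lemma \ref{lem: local estim 2 KT} via Lemma \ref{lem: interp T1} applied to the bilinear form $B_Q$: at each admissible anchor, interpolate toward a suitable dispersive anchor with the matching HM value. The resulting exponents fill out precisely the set of sextuples $(1/q,1/r,1/p,1/\tilde q,1/\tilde r,1/\tilde p)$ whose spatial projection lies in the convex hull of the admissible spatial exponents and the dispersive spatial exponents subject to $1/r\leq 1/p$, $1/\tilde r\leq 1/\tilde p$. Writing down the slope of the interpolating segment in the $(1/q,1/r-1/p)$-plane, one sees that it is exactly controlled by the quantity $\frac{1}{\tilde r}-\frac{1}{\tilde p}-\frac{1}{r}+\frac{1}{p}-\frac{2}{nq}$, which produces the inequalities \eqref{cond: loc inhom 2 KT} (and their tildes). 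The \emph{strict} inequalities \eqref{cond: loc inhom <} then arise because Lemma \ref{lem: local estim 1 KT} is available only at non-endpoint admissible triples, so the interpolation weight toward Lemma \ref{lem: local estim 2 KT} must lie in the open interval $(0,1)$; translating this back into the admissibility bounds $p^*(a)\leq p\leq a\leq r\leq r^*(a)$ gives exactly $(n-1)/p'<n/\tilde r$ and its twin. Finally, Lemma \ref{lem: local estim 3 KT} is used to lower $1/q,1/\tilde q$ from their interpolated values to cover the full range $0\leq 1/q,1/\tilde q\leq 1$ claimed in \eqref{eq: local scal KT}.

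The two cubic regions in \eqref{omitted points in lemma} sit outside this interpolation scheme because one has $r=\infty$ or $\tilde r=\infty$; but these points correspond directly to Lemma \ref{lem: local estim 2 KT} at $(\infty,\infty,p,\infty,p',1)$ or $(\infty,r,1,\infty,\infty,r')$, from which one descends to arbitrary $1/q,1/\tilde q\in[0,1]$ by Lemma \ref{lem: local estim 3 KT}. The main obstacle I anticipate is the bookkeeping step that identifies the abstract convex hull produced by bilinear interpolation with the explicit algebraic description \eqref{cond: loc inhom 2 KT}--\eqref{cond: loc inhom <}: concretely, given any exponent sextuple satisfying those inequalities together with $\HM(p,r)=\HM(\tilde p',\tilde r')$, one must exhibit it as an explicit convex combination of a non-endpoint admissible anchor (with $a=\tilde a'$) and a Lemma \ref{lem: local estim 2 KT} anchor whose interpolation weight is strictly in $(0,1)$, and verify that the power of $\lambda$ produced by interpolation matches $\beta(q,r,\tilde q,\tilde r)$. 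This is a matter of solving a small linear system in the reciprocals, and the condition \eqref{cond: loc inhom <} is exactly what guarantees solvability with weights in the open interval.
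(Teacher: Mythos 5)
Your high-level plan matches the paper's: enumerate the anchor estimates from Lemmas \ref{lem: local estim 1 KT}--\ref{lem: local estim 3 KT}, take their convex hull in the reciprocal-exponent space, and rewrite that hull as the explicit algebraic conditions, using the affine dependence of $\beta$ and of the $\HM$ constraint on reciprocals. Your reading of the cubic regions as the $\theta$-thickened $[A,B]$ and $[A,C]$ edges of the dispersive triangle is also correct, and your attribution of the strict inequalities \eqref{cond: loc inhom <} to the non-endpoint restriction in Lemma \ref{lem: local estim 1 KT} is essentially the right mechanism.

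However, your choice of interpolation tool is a genuine problem. You propose to run the bilinear real interpolation lemmas (Lemmas \ref{lem: interp T1}, \ref{lem: interp T}) on $B_Q$, but the paper deliberately uses the Riesz--Thorin (complex) convexity theorem for this step, and for good reason. In the local estimate \eqref{eq: local scal KT} all three exponents of each mixed norm $L^q_tL^r_xL^p_v$ move simultaneously along the interpolation segment. Complex interpolation of such spaces is clean: $\bigl[L^{q_0}(L^{r_0}(L^{p_0})),L^{q_1}(L^{r_1}(L^{p_1}))\bigr]_\theta = L^{q_\theta}(L^{r_\theta}(L^{p_\theta}))$, and the operator norms $\lambda^{\beta_0},\lambda^{\beta_1}$ combine to $\lambda^{\beta_\theta}$ since $\beta$ is affine. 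Real interpolation, by contrast, does not return mixed Lebesgue (or even Lorentz--Lebesgue) spaces when more than one index of the nested $L^qL^rL^p$ changes: the vector-valued embedding Lemma \ref{prop: real interp 1} is only one-sided and comes with the constraint that the outer Lebesgue exponent be $\le$ the fine Lorentz index. Iterating this through three nested layers forces the fine index $P$ (resp.\ $Q$) of the real method to dominate $\max(q_\theta',r_\theta',p_\theta')$ on each argument, while the hypothesis $1+1/R=1/P+1/Q$ of Lemma \ref{lem: interp T1} pulls $1/P+1/Q$ up to at least $1$. These two requirements are incompatible over most of the target region, so the real method as proposed does not close. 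The paper saves the real bilinear lemmas for Section \ref{sect: proof glob inhom}, where only the outer time exponent is perturbed and the inner $L^r_xL^p_v$ norm is frozen, so a single application of Lemma \ref{prop: real interp 2} in $t$ suffices. The fix to your argument is simply to replace Lemmas \ref{lem: interp T1}--\ref{lem: interp T} with Riesz--Thorin applied to the operator $F\mapsto W(t)[\chi_{\lambda J}F]$ (or equivalently to $B_Q$), after which the "bookkeeping step" you anticipated is indeed the whole content of the proof.
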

\begin{proof}
We apply the Riesz-Thorin convexity theorem to interpolate between the already proven local estimates. To that end we need to find the convex hull of the sets in $\R^6$ associated with Lemmas \ref{lem: local estim 1 KT} and \ref{lem: local estim 2 KT} and then expand that set by the rule given in Lemma \ref{lem: local estim 3 KT}.

The range of validity $S_1$ of the local estimates in Lemma \ref{lem: local estim 1 KT} is given by the system
\begin{gather}
 0 < \rp r, \rp {\tilde{r}} \leq 1,  \quad 0 \leq \rp q, \rp {\tilde{q}}, \rp p, \rp {\tilde{p}}\leq 1,  \label{sys: 1 line 1}\\
  \frac 1 q = \frac n 2 \left(\rp p - \rp r \right), \quad \frac 1 {\tilde q} = \frac n 2 \left(\rp {\tilde p} - \rp {\tilde r} \right), \label{sys: 1 line 2}\\
  \rp r + \rp p + \rp {\tilde r} + \rp {\tilde p} = 2, \label{sys: 1 line 3}\\
  \frac {n-1}{p} < \frac {n+1}{r}, \qquad \frac {n-1}{\tilde p}  < \frac {n+1}{\tilde r},\label{sys: 1 line 4}
\end{gather}
or if $(1/q, 1/r, 1/p, 1/\tilde q, 1/ \tilde r, 1/ \tilde p) \in \{B=(0,0,0,0,1,1),  C=(0,1,1,0,0,0)\}$.

Note that $S_1$ is a convex polyhedron in $\R^6$ and the two points $B$ and $C$ lie on its boundary. The range of validity $S_2$ of the local estimates in Lemma \ref{lem: local estim 2 KT} is the convex hull, in fact a triangle, of the three points
\begin{equation} \label{triangle}
 A=(0,0,1,0,0,1), \qquad B=(0,0,0,0,1,1), \qquad C=(0,1,1,0,0,0).
\end{equation}
The vertices $B$ and $C$ are already included in $S_1$ and thus it would suffice to take only the vertex $A$. Hence, we obtain the following set
\begin{align*}
 \rp Q = \frac \theta q, \qquad \rp R = \frac \theta r, \qquad \rp P = 1-\theta + \frac \theta p,\qquad\qquad\qquad\;\;\;\\
 \rp {\tilde Q} = \frac \theta {\tilde q}, \qquad \rp {\tilde R} = \frac \theta {\tilde r}, \qquad \rp {\tilde P} = 1-\theta + \frac \theta {\tilde p}, \qquad 0 \leq \theta \leq 1,
\end{align*}
where $\Triptrip$ are the coordinates of the new set $S_3$ written in terms of $\triptrip$ and $\theta$. Of course, we must also add to $S_3$ the line segments $[A,B]$ and $[A,C]$. We shall treat this case separately at the end.

Finally, we apply the rule given in Lemma \ref{lem: local estim 3 KT} and thus we replace the equations for $Q$ and $\tilde Q$ above with the following inequalities
\begin{align*}
 1 \geq \rp Q \geq \frac \theta q,\qquad 1 \geq \rp {\tilde Q} \geq \frac \theta {\tilde q},
\end{align*}
plus the restrictions
\begin{align} \label{sys: 1 line 5}
 \rp r \leq \rp p, \quad \rp {\tilde r} \leq \rp {\tilde p},
\end{align}
which were implicitly assumed in \eqref{sys: 1 line 2}.

{\bf 1.} We first eliminate $q$ and $\tilde q$ from the system for $S_1$ to obtain
\begin{align*}
 \rp Q \geq \frac n 2 \left( \frac \theta p - \frac \theta r \right), \quad &\Leftrightarrow \quad \rp Q \geq \frac n 2 \left( \theta - 1 + \rp P - \rp R \right),\\
 &\Leftrightarrow \quad \theta \leq \rp {P'} + \rp R + \frac 2 {nQ}.
\end{align*}
Similarly,
\begin{equation*}
 \qquad \quad\theta \leq \rp {\tilde P'} + \rp {\tilde R} + \frac 2 {n\tilde Q}, \qquad \rp Q, \rp {\tilde Q} \leq 1.\quad
\end{equation*}

{\bf 2.} As expected, condition (\ref{sys: 1 line 3}) is invariant
\begin{equation*}
 \scondAB.
\end{equation*}

{\bf 3.} Reworking condition (\ref{sys: 1 line 4}), we obtain
\[
\theta < \frac {n+1}{n-1} \rp R + \rp {P'}, \qquad \theta < \frac {n+1}{n-1} \rp {\tilde R} + \rp {\tilde P'}.
\]

{\bf 4.} Condition (\ref{sys: 1 line 5}) is replaced by
\[
\rp {P'} + \rp R \leq \theta, \quad \rp {\tilde P'} + \rp {\tilde R} \leq \theta.
\]

{\bf 5.} Finally, conditions (\ref{sys: 1 line 1}) are transformed into
\[
  \rp {P'}, \rp {\tilde{P}'}, \rp R, \rp {\tilde R} \leq \theta, \quad 0 \leq \rp Q, \rp {\tilde Q}, \rp P, \rp {\tilde{P}}, \rp R, \rp {\tilde{R}}\leq 1.
\]

{\bf 6.} We group all conditions obtained in the previous 5 steps according to their type
\begin{align}
  0,\, \rp {P'},\, \rp {\tilde{P}'},\, \rp R,\, \rp {\tilde{R}}, \, \rp {P'} + \rp R, \, \rp {\tilde P'} + \rp {\tilde R}  \leq \theta. \qquad\qquad\qquad\label{sys: 2 line 1}\\
   \theta \leq \rp R + \rp {P'} + \frac 2 {nQ},\,  \rp {\tilde{R}} + \rp {\tilde{P}'} + \frac 2 {n\tilde{Q}},
  \frac {n+1}{n-1} \rp R + \rp {P'},\, \frac {n+1}{n-1} \rp {\tilde{R}} + \rp {\tilde{P}'},\, 1. \label{sys: 2 line 2}\\
  0 \leq \rp Q,\, \rp {\tilde{Q}},\, \rp P,\, \rp {\tilde{P}},\, \rp R,\, \rp {\tilde{R}} \leq 1,\, \qquad \rp P + \rp R + \rp {\tilde{R}} + \rp {\tilde{P}} = 2.\qquad \label{sys: 2 line 3}
\end{align}

{\bf 7.} We discard the redundant conditions like
\[
 0,\, \rp {P'},\, \rp {\tilde{P}'}, \rp {R},\, \rp {\tilde R} \leq \theta,
\]
which are all weaker than the other two in \eqref{sys: 2 line 1}.

There exists $\theta$ solving all inequalities in \eqref{sys: 2 line 1}, \eqref{sys: 2 line 2}, if and only if every quantity in \eqref{sys: 2 line 1} is bounded from above by any quantity in \eqref{sys: 2 line 2}. Thus we form all possible combinations between the quantities in the two types of (reduced) inequalities  to obtain the following set of conditions
\begin{align*}
 \rp R + \rp {P'} \leq \rp {\tilde{R}} + \rp {\tilde{P}'} + \frac 2 {n\tilde{Q}}, &\quad \rp {\tilde{R}} + \rp {\tilde{P}'} \leq \rp R + \rp {P'} + \frac 2 {nQ},\\
 \rp R + \rp {P'} < \frac {n+1}{n-1} \rp {\tilde{R}} + \rp {\tilde{P}'},  &\quad \Leftrightarrow \quad \frac {n -1}{P'} < \frac {n}{\tilde{R}},\\ 
 \rp {\tilde{R}} + \rp {\tilde{P}'} <  \frac {n+1}{n-1} \rp R + \rp {P'},  &\quad \Leftrightarrow \quad  \frac {n-1}{\tilde{P}'} < \frac {n}{R},\\
 \rp R \leq \rp P, &\quad \rp {\tilde R} \leq \rp {\tilde P},
\end{align*}
describing the region $S_3$.

{\bf 8.} We apply the rule given in Lemma \ref{lem: local estim 3 KT} to the two line segments $[A,B]$ and $[A, C]$ to obtain the following two ``cubic'' regions in $\R^6$
\begin{equation} \label{omitted points}
  \begin{split}
 \left(\mu,0,\kappa,\nu,1 - \kappa,1\right), \qquad 0 \leq \mu, \nu, \kappa \leq 1,\qquad\\
 \left(\mu,1-\kappa,1,\nu,0,\kappa \right),  \qquad 0 \leq \mu, \nu, \kappa \leq 1.\qquad
  \end{split}
\end{equation}
Hence, the computation of the set $\mathcal{E}_0$ is finished.
\end{proof}

\section{Proof of the generalized inhomogeneous Strichartz estimates} \label{sect: proof glob inhom}

\subsection{Generalized inhomogeneous non-endpoint estimates} \label{sect: glob inhom non-endp}
In this paragraph we prove the inhomogeneous \Str estimate
\[
 \normQRP{W(t)F}{q}{r}{p} \lesssim \normQRPD{F}{q}{r}{p}, \quad \forall F \in \spaceLqrptd,
\]
in the range $q > \tilde q'$. Thanks to Lemma \ref{lem: bil A}, we have reduced this problem to showing the estimate
\[
 \norm{\{ b_\lambda\}}_{l^1} \lesssim \normQRP{F}{\tilde q'}{\tilde r'}{\tilde p'}\normQRP{G}{q'}{r'}{p'}, \quad \forall F \in \spaceLqrptd,\; \forall G \in \spaceLqrpd,
\]
where
\begin{align*}
 \left\{ b_\lambda \right\}_{\lambda \in 2^{\Z}} = \left\{ \sum_{Q \in \Ol} \abs{B_Q(F, G)} \right\}_{\lambda \in 2^{\Z}}.
\end{align*}
We shall next specify the range of validity of the above estimates in terms of the vector
\[
  P=\triptrip \in \mathcal{E}_0
\]
for which
\begin{align}
  1/q + 1/ {\tilde q} = n&\left(1-1/ r - 1/ {\tilde r}\right). \label{cond: scal 1/q}
\end{align}
Let us denote by $\Delta$ the set in $\R^2$
\begin{align*}
  \{(x,y) | x>0, y > 0, x+y < 1, (x, 1/r, 1/p, y, 1/\tilde r, 1/ \tilde p) \in \mathcal{E}_0 \}
\end{align*}
and its interior (largest open subset) by int($\Delta$). In this paragraph we prove that for any such vector $P \in$ int($\Delta$) the corresponding inhomogeneous \Str estimate holds true.

Under the assumption of the latter condition and in view of Corollary \ref{cor: 1/q+} we have the estimate
\[
 \abs{b_\lambda} \lesssim \lambda^{\beta(q,r,\tilde q, \tilde r)} \normQRP{F}{\tilde q'}{\tilde r'}{\tilde p'}\normQRP{G}{q'}{r'}{p'},
\]
or equivalently, $\{ b_\lambda\} \in l^\infty_{s}$ with $s=-\beta(q,r,\tilde q, \tilde r)$. Let us set
\[
 1/q_0 = 1/q + \epsilon, \quad  1/\tilde q_0 = 1/\tilde q + \epsilon, \quad 1/q_1 = 1/q - 3\epsilon, \quad  1/\tilde q_1 = 1/\tilde q - 3\epsilon,
\]
for some small enough $\epsilon >0$, whose existence is guaranteed by our assumptions, such that the perturbed exponent vectors do not leave int($\Delta$). Then we have that $\beta(q_0, r, \tilde q_0, \tilde r)$ $=$ $2\epsilon$,  and $\beta(q_1, r, \tilde q_0, \tilde r)$ $=$ $\beta(q_0, r, \tilde q_1, \tilde r)$ $=$ $-2\epsilon$. The following bilinear maps
\begin{align*}
                A: \SpaceLqrptd{q_0}{r}{p}  \times \SpaceLqrpd{q_0}{r}{p} &\rightarrow l_{-2\epsilon}^\infty,\\
	        A: \SpaceLqrptd{q_0}{r}{p}  \times \SpaceLqrpd{q_1}{r}{p} &\rightarrow l_{2\epsilon}^\infty,\\
		A: \SpaceLqrptd{q_1}{r}{p}  \times \SpaceLqrpd{q_0}{r}{p} &\rightarrow l_{2\epsilon}^\infty,
\end{align*}
are bounded. In virtue of Lemma \ref{lem: interp T}, we have that the map
\begin{align*}
                A: (\SpaceLqrptd{q_0}{r}{p}, \SpaceLqrptd{q_1}{r}{p})_{1/4, \tilde q'}  \times (\SpaceLqrpd{q_0}{r}{p}, \SpaceLqrpd{q_1}{r}{p})_{1/4, q'}   \rightarrow (l_{2\epsilon}^\infty, l_{-2\epsilon}^\infty)_{1/2,1}
\end{align*}
is also bounded. Finally, in view of Lemma \ref{lem: interp l} and the embeddings of the Lorentz spaces, we obtain
\begin{align*}
                A: \spaceLqrptd  \times \spaceLqrpd \rightarrow l^1.
\end{align*}

All assumption made in this paragraph are explicitly stated in Theorem \ref{thm: glob inhom est KT}, part (i). We remark that condition \eqref{cond: loc inhom 2 KT} together with \eqref{cond: scal 1/q} is equivalent to $\trip$ and $\tript$ being KT-acceptable. Furthermore, in this case the inequalities in \eqref{cond: loc inhom 2 KT} have to be taken as strict inequalities so that $P \in$ int($\Delta$). Let us also note that the two locally acceptable ``cubic'' sets in \eqref{omitted points} give rise to the two globally acceptable ``cubic cross section'' sets $\Sigma_1$ and $\Sigma_2$ in Definition \ref{dfn: joint accept}.

\subsection{Global inhomogeneous endpoint estimates with $\tilde q=\infty$} \label{sect: glob inhom infty}

In this paragraph we prove the inhomogeneous \Str estimates with $P$ lying on either of the two catheti of $\Delta OAB$ in Figure \ref{fig: 1}. Since by duality both type of estimates are equivalent, it is enough to consider only the case $\tilde q=\infty$. We exclude the two endpoints $(0,0)$ and $(1,0)$ from our considerations. We suppose that
\[
  P(1/q, 1/r, 1/p, 0, 1/\tilde r, 1/ \tilde p) \in \{0<1/q<1\} \cap \mathcal{E}_0
\]
and is such that $q$ satisfies every inequality of $\mathcal E_0$ as a strict inequality. We also assume the scaling condition \eqref{cond: scal 1/q} from the previous paragraph. Then we have
\begin{align*}
  A: \SpaceLqrptdMod{1}{r}{p}  \times \SpaceLqrpd{q_0}{r}{p} &\rightarrow l_{\epsilon}^\infty,\\
  A: \SpaceLqrptdMod{1}{r}{p}  \times \SpaceLqrpd{q_1}{r}{p} &\rightarrow l_{-\epsilon}^\infty,
\end{align*}
where
\begin{align*}
 \rp {q_0} = \rp q - \rp \epsilon, \quad \rp {q_1} = \rp q + \rp \epsilon.
\end{align*}
The real method with parameters $(\theta, q)$ = $(1/2, 1)$, see Lemma \ref{lem: interp T1}, gives that
\begin{align*}
  A: \SpaceLqrptdMod{1}{r}{p}  \times \SpaceLqrpdMod{q}{1}{r}{p} &\rightarrow l^1.
\end{align*}
Equivalently, in view of the $TT^*$-principle,
\begin{align}\label{eq: inhom pertb}
 \normQRP{W(t)F}{q,\infty}{r}{p} \lesssim \normQRP{F}{1}{\tilde r'}{\tilde p'},
\end{align}
for all $F \in \SpaceLqrptdMod{1}{r}{p}$.
The explicit restrictions on the Lebesgue exponents $\trip$ and $(\infty, \tilde r, \tilde p)$ are stated in Theorem \ref{thm: glob inhom est KT}, part (ii). Analogously, the dual case is stated as part (iii) of that theorem.

In the remainder of this paragraph we address (i) the corresponding homogeneous estimates to \eqref{eq: inhom pertb} via the Equivalence Theorem \ref{thm: equiv KT} in its stronger form for Lorentz spaces and (ii) the sharpening of \eqref{eq: inhom pertb} to the Lebesgue norm $L^q_t$.

\begin{lem} \label{lem: hom bc}
The estimate
\begin{align} \label{eq: hom bc inf}
 \normQRP{U(t)f}{q, \infty}{r}{p} \lesssim \normBC{f}{b}{c},
\end{align}
holds for all $f \in \spaceLbc$, whenever
\begin{align*}
 &\rp q + \frac n r = \frac n b, \quad \HM(r,p) = \HM(b, c) \stackrel{def}{=} a,  \quad r < \frac {nc} {n-1},\\
 &p < b \leq a \leq c < r, \quad 1 < q < \infty,\, 1 \leq p, \tilde p, r, \tilde r < \infty.
\end{align*}
\end{lem}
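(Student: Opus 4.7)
The plan is to deduce the lemma from the already-proven inhomogeneous endpoint estimate \eqref{eq: inhom pertb} by invoking the Equivalence Theorem in its Lorentz-in-time strengthening (Remark \ref{rem: Equiv thm KT}). I would set $\tilde q = \infty$, $\tilde r = b'$, $\tilde p = c'$ so that $(\tilde r', \tilde p') = (b,c)$; part (ii) of Theorem \ref{thm: glob inhom est KT} then delivers
\[
\normQRP{W(t)F}{q,\infty}{r}{p} \lesssim \normQRP{F}{1}{b}{c},
\]
which the Equivalence Theorem, applied with $L^{q,\infty}_t$ in place of $L^q_t$, transfers to the desired homogeneous bound \eqref{eq: hom bc inf}.

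The main task is therefore to verify that the pair $(q,r,p)$ and $(\infty, b', c')$ meets the hypotheses of Theorem \ref{thm: glob inhom est KT}(ii). The scaling identity $1/q + 1/\tilde q = n(1 - 1/r - 1/\tilde r)$ collapses to $1/q = n(1/b - 1/r)$, and the harmonic-mean constraint \eqref{cond: joint KT accept 2} becomes $\HM(p,r) = \HM(c,b)$, both of which are stated assumptions. The acceptability of $(q,r,p)$ amounts to $1/q < n(1/p - 1/r)$, which in view of the scaling relation is equivalent to $p < b$; the acceptability of $(\infty, b', c')$ reduces, when $b < c$, to $c' < b'$, and is otherwise covered by the degenerate $\tilde p = \tilde r$ branch of Definition \ref{dfn: accept}.

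The only potentially delicate points are the two inequalities in \eqref{cond: glob inhom KT}. The condition $(n-1)/\tilde p' < n/r$ becomes $(n-1)/c < n/r$, which is exactly the assumption $r < nc/(n-1)$. The condition $(n-1)/p' < n/b'$ rearranges to $n/b - (n-1)/p < 1$, and this is forced by $p < b$, since then $n/b - (n-1)/p < n/p - (n-1)/p = 1/p \leq 1$. With all hypotheses verified, applying Theorem \ref{thm: glob inhom est KT}(ii) and then the Lorentz Equivalence Theorem concludes the argument; I do not anticipate any serious obstacle beyond this bookkeeping, since the analytical content has already been packaged in \eqref{eq: inhom pertb} and in the proof of the Equivalence Theorem, the latter's extension to Lorentz temporal norms being the substantive ingredient deferred to Remark \ref{rem: Equiv thm KT}.
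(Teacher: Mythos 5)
Your proposal is correct and follows essentially the same route as the paper: both deduce \eqref{eq: hom bc inf} from the inhomogeneous Lorentz estimate \eqref{eq: inhom pertb} proved in the preceding paragraph (equivalently, the $\tilde q=\infty$ case of Theorem~\ref{thm: glob inhom est KT}(ii)) via the Lorentz-strengthened Equivalence Theorem of Remark~\ref{rem: Equiv thm KT}, the remaining work being the bookkeeping of translating the joint-acceptability constraints into conditions on $(q,r,p,b,c)$. Your verifications of the scaling identity, the two acceptability conditions (reducing to $p<b$ and $b\le c$), and the two inequalities in \eqref{cond: glob inhom KT} (reducing to $r<nc/(n-1)$ and to a consequence of $p<b$) are all accurate and match the conditions produced by the paper's more explicit rederivation from $\mathcal{E}_0$.
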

\begin{proof}
The range of validity of estimate \eqref{eq: hom bc inf} is determined in the following way. We first write the conditions defining the set $\mathcal E_0$, however, any inequality where $q$ appears is taken as a strict inequality. To that system we add the scaling condition \eqref{cond: scal 1/q}. Thus, we have that $1/q$ $=$ $n\left(1-1/ r - 1/ {\tilde r}\right)$, and
\begin{gather*}
 0 < \rp q, \rp {\tilde{q}} < 1,  \quad 0 < \rp p, \rp {\tilde{p}}, \rp r, \rp {\tilde{r}} \leq 1,  \\
 \rp r \leq \rp p, \quad \rp {\tilde r} \leq \rp {\tilde p},  \quad \HM(p,r)=\HM(\tilde p', \tilde r'),\\
 \rp q < n \left( \rp p - \rp r \right), \quad 0 \leq n \left( \rp {\tilde p} - \rp {\tilde r} \right), \\
\frac {n -1}{p'} < \frac {n}{\tilde{r}}, \quad \frac {n-1}{\tilde{p}'} < \frac {n}{r},
 \end{gather*}
or that the point $(1/q, 1/r, 1/p, 0, 1/ \tilde r, 1/ \tilde p)$ belongs to the set
\begin{equation*}
   \left(\kappa, 0, \mu,   0, 1-\mu, 1\right), \quad 0 < \kappa, \mu < 1, \; \kappa=n\mu.
\end{equation*}
The latter set of exponents does not give us anything new as it essentially expresses a special case of the decay estimate
\begin{align*}
 \normQRP{U(t)f}{q, \infty}{\infty}{nq} \lesssim \normBC{f}{nq}{\infty}.
\end{align*}
Let us use the more natural notation for the exponents $b$ $=$ $\tilde r'$ and $c$ $=$ $\tilde p'$. Thus we get the following system of conditions
\begin{align*}
\rp q + \frac n r = \frac n b, \quad &\HM(p,r)=\HM(\tilde p', \tilde r'),\\
r < \frac {nc}{n-1},           \quad &p < b \leq c < r, \\
1 <q < \infty,       \quad &1 \leq p, \tilde p, r, \tilde r < \infty. \qedhere
\end{align*}
\end{proof}

\begin{cor} \label{cor: hom bc}
 The estimate
\begin{align} \label{eq: hom bc}
 \normQRP{U(t)f}{q}{r}{p} \lesssim \normBC{f}{b}{c}, \quad b\neq c,
\end{align}
holds for all $f \in \spaceLbc$ whenever
\begin{align*}
 \rp q + \frac n r = \frac n b, \quad p < b < a < c < r, \quad r < \frac n {n-1} c, \quad q \geq c,\\
 \HM(r,p) = \HM(b, c) \stackrel{def}{=} a, \quad 1 < q, b, c, r < \infty, \quad 1 \leq p < \infty.
\end{align*}
The $L^q_t$-norm in \eqref{eq: hom bc} can be replaced by the $L^{q,c}_t$-norm. In such case the assumption $q\geq c$ can be removed.
\end{cor}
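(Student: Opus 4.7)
The plan is to derive both versions of the estimate by real interpolation between two weak-type instances of Lemma \ref{lem: hom bc}, chosen so that the codomain Banach space in the Bochner--Lebesgue form remains fixed, and then to pass from the Lorentz norm $L^{q,c}_t$ to the Lebesgue norm $L^q_t$ via the embedding $L^{q,c}\hookrightarrow L^q$ (valid when $q\geq c$).

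First I would select a one-parameter family of quintuples $(q_\tau,r,p,b_\tau,c_\tau)$ through the given $(q,r,p,b,c)$ at $\tau=0$, holding $r$ and $p$ fixed. The scaling relation $1/q+n/r=n/b$ and the identity $\HM(p,r)=\HM(b,c)$ then force $1/q_\tau$, $1/b_\tau$ and $1/c_\tau$ to be affine functions of $\tau$. Since all hypotheses of Lemma \ref{lem: hom bc} hold at $\tau=0$ as strict inequalities (note the corollary imposes $p<b<a<c<r$ and $r<nc/(n-1)$), they persist for small $|\tau|$. Pick $\tau=\pm\varepsilon$ so that $1/q=\tfrac12(1/q_0+1/q_1)$, and likewise for $b$ and $c$. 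Lemma \ref{lem: hom bc} then yields two weak-type estimates
\[
\normQRP{U(t)f}{q_i,\infty}{r}{p}\lesssim\normBC{f}{b_i}{c_i},\qquad i=0,1.
\]

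Next I would apply the real interpolation functor with parameters $(\theta,s)=(1/2,c)$. Because the codomain Banach space $\mathcal B = L^r_x L^p_v$ of the output is fixed and only the outer weak-Lebesgue exponent varies, Lemma \ref{prop: real interp 2} identifies
\[
\bigl(L^{q_0,\infty}_t\mathcal B,\,L^{q_1,\infty}_t\mathcal B\bigr)_{1/2,\,c}=L^{q,c}_t\mathcal B.
\]
On the input side both $b_i$ and $c_i$ move, so I would invoke Lemma \ref{prop: real interp 1} with $\mathcal B_i=L^{c_i}_v$ and outer exponent averaging to $b$: since $(L^{c_0}_v,L^{c_1}_v)_{1/2,\,c}=L^{c,c}_v=L^c_v$ by Lemma \ref{prop: real interp 2}, and $b\leq c$, one obtains the embedding
\[
L^b_x L^c_v\hookrightarrow\bigl(L^{b_0}_x L^{c_0}_v,\,L^{b_1}_x L^{c_1}_v\bigr)_{1/2,\,c}.
\]
Composing, the interpolated estimate reads
\[
\normQRP{U(t)f}{q,c}{r}{p}\lesssim\normBC{f}{b}{c},
\]
which is the $L^{q,c}_t$ version claimed in the corollary. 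The $L^q_t$ version follows at once from the continuous inclusion $L^{q,c}_t\hookrightarrow L^{q,q}_t=L^q_t$, which holds precisely when $q\geq c$.

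The main subtlety lies in the choice of perturbation direction: by fixing $r$ and $p$ and moving along the one-parameter family determined by the two scaling identities, the variation is pushed onto the single scalar exponent $q$ on the output side, while the joint motion of $(b,c)$ on the input side can be absorbed by Lemma \ref{prop: real interp 1} with $s=c$, the hypothesis $b\leq c$ required there being automatic from $b<a<c$. The hypothesis $q\geq c$ is used only in the last step to return from the Lorentz norm $L^{q,c}_t$ to the Lebesgue norm $L^q_t$, which is exactly why dropping that hypothesis still yields the sharper Lorentz-norm statement.
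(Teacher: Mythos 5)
Your proposal is correct and follows essentially the same route as the paper: perturb $(q,b,c)$ along the one-parameter family fixing $r,p$ and the scaling identities, apply Lemma \ref{lem: hom bc} at the two perturbed points, and interpolate by the real method with second parameter $c$, using Lemma \ref{prop: real interp 1} on the domain side (valid since $b\leq c$) and the Lorentz-space identification on the target side, finally embedding $L^{q,c}\hookrightarrow L^q$ when $q\geq c$. Your write-up is in fact a bit more explicit than the paper about why the variation of $(1/q,1/b,1/c)$ is affine and about the separate roles of Lemmas \ref{prop: real interp 1} and \ref{prop: real interp 2}.
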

\begin{proof}
Each estimate in the statement of this corollary can be proved by interpolating two estimates \eqref{eq: hom bc inf} with the real method. Indeed, let us perturb slightly the exponents $q$, $b$, and $c$, keeping $r$ and $p$ fixed, in such a way that they remain in the range of validity of the estimates \eqref{eq: hom bc inf}. For example, the perturbed exponents can be taken as follows
\begin{align*}
 1/q_1 = 1/q + n/\epsilon, \quad 1/b_1 = 1/b + 1/\epsilon, \quad 1/c_1 = 1/c - n/\epsilon, \\
 1/q_2 = 1/q - n/\epsilon, \quad 1/b_2 = 1/b - 1/\epsilon, \quad 1/c_2 = 1/c + n/\epsilon.
\end{align*}
We then interpolate by the real method with $(\theta, q)$ = $(1/2, c)$, and make use of Lemma \ref{prop: real interp 1}.
\end{proof}

Let us remark that the case of $b=c$, excluded in Corollary \ref{cor: hom bc}, is not new and is considered in Theorem \ref{thm: Strich admiss KT}.

\begin{lem} \label{lem: inhom restr infty}
Suppose that  $\trip$ and $(\infty, \tilde r, \tilde p)$ are two jointly KT-acceptable exponent triplets and $1$ $<$ $\tilde p'$ $\leq$ $q$ $<$ $\infty$. Then the estimate
\begin{align*}
\normQRP{W(t)F}{q}{r}{p} \lesssim \normQRP{F}{1}{\tilde r'}{\tilde p'}, 
\end{align*}
holds for all $F \in \SpaceLqrp{1}{\tilde r'}{\tilde p'}$. Similarly, if $(\infty, r, p)$ and $\tript$ are  two jointly KT-acceptable exponent triplets and $1$ $<$ $\tilde q'$ $\leq p$ $<$ $\infty$, then the estimate
\begin{align*}
\normQRP{W(t)F}{\infty}{r}{p} \lesssim \normQRP{F}{\tilde q'}{\tilde r'}{\tilde p'} 
\end{align*}
holds for all $F \in \SpaceLqrp{\tilde q'}{\tilde r'}{\tilde p'}$.
\end{lem}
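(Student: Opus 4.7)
The plan is to reduce the first estimate to the generalized homogeneous estimate of Theorem \ref{thm: Strich admiss mix KT} via Minkowski's integral inequality, and then to obtain the second estimate as a formal dual of the first via the Duality Lemma \ref{lem: dual}.

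For the first estimate, I would begin with the defining identity $W(t)F = \Int U(t-s)F(s,\cdot)\,ds$. Since the target $\SpaceLqrp{q}{r}{p}$ is a Banach space (one checks that the hypothesis $\tilde p' \leq q < \infty$ together with joint KT-acceptability forces $r,\tilde r < \infty$, excluding the degenerate $\Sigma_1$/$B$ configurations), Minkowski's integral inequality applied in the $(t,x,v)$-norm combined with the time-translation invariance
\[
\normQRP{U(t-s)F(s,\cdot)}{q}{r}{p} = \normQRP{U(t)F(s,\cdot)}{q}{r}{p}
\]
yields
\[
\normQRP{W(t)F}{q}{r}{p} \leq \Int \normQRP{U(t)F(s,\cdot)}{q}{r}{p}\,ds.
\]
I would then invoke Theorem \ref{thm: Strich admiss mix KT} with $b=\tilde r'$ and $c=\tilde p'$, applied pointwise in $s$ to $f = F(s,\cdot)$, obtaining $\normQRP{U(t)F(s,\cdot)}{q}{r}{p} \lesssim \normBC{F(s,\cdot)}{\tilde r'}{\tilde p'}$; integrating in $s$ finishes the argument.

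The bulk of the work is then the verification that the 5-vector $(q,r,p,\tilde r',\tilde p')$ meets the hypotheses of Theorem \ref{thm: Strich admiss mix KT}. The scaling identity and the harmonic-mean identity in \eqref{est: gen hom KT 1} are exactly the joint KT-acceptability conditions \eqref{cond: joint KT accept 1}--\eqref{cond: joint KT accept 2} with $\tilde q=\infty$; the chain $p < \tilde r' \leq a \leq \tilde p' < r$ required by \eqref{est: gen hom KT 2} follows from the strict acceptability inequalities $1/q < n(1/p - 1/r)$ and $\tilde p < \tilde r$ combined with $\HM(p,r) = \HM(\tilde p',\tilde r')$; the upper bound $r < r^*(\tilde p')$ of \eqref{est: gen hom KT 3} is precisely condition \eqref{cond: glob inhom KT}; and the extra hypothesis $q \geq c$ of Theorem \ref{thm: Strich admiss mix KT} is the hypothesis $\tilde p' \leq q$ of the lemma.

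For the second estimate, I would appeal to the Duality Lemma \ref{lem: dual}, which shows that
\[
\normQRP{W(t)F}{\infty}{r}{p} \lesssim \normQRP{F}{\tilde q'}{\tilde r'}{\tilde p'}
\]
is equivalent to $\normQRP{W(t)F}{\tilde q}{\tilde r}{\tilde p} \lesssim \normQRP{F}{1}{r'}{p'}$, which is of the form of the first statement applied to the triplets $(\tilde q,\tilde r,\tilde p)$ and $(\infty,r,p)$. The joint KT-acceptability relation is symmetric in the two triplets since $\HM(p,r) = \HM(\tilde p',\tilde r')$ rearranges to $\HM(\tilde p,\tilde r) = \HM(p',r')$, and the hypothesis $\tilde q' \leq p$ becomes $p' \leq \tilde q$, which is exactly the condition needed to apply the first statement. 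The main obstacle, such as it is, is merely the bookkeeping in verifying the acceptability conditions and the implicit exclusion of the degenerate boundary cases; once Theorem \ref{thm: Strich admiss mix KT} is in hand there is no further analytic difficulty.
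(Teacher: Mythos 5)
Your argument is correct and follows essentially the same route as the paper: the paper's proof deduces the lemma from Lemma~\ref{lem: hom bc}, Corollary~\ref{cor: hom bc} (whose union is Theorem~\ref{thm: Strich admiss mix KT}) together with the Equivalence Theorem, whereas you pass from the homogeneous estimate to the inhomogeneous one by applying Minkowski's inequality directly (which is exactly the easy direction of the Equivalence Theorem with $\tilde q'=1$) and obtain the dual case via the Duality Lemma (which, specialized to $\tilde q=\infty$, is the same equivalence \eqref{est: equiv inhom 1}\,$\Leftrightarrow$\,\eqref{est: equiv inhom 2}). The exponent bookkeeping you sketch (the scaling and harmonic-mean identities, $p<\tilde r'\le a\le\tilde p'<r$, the match of $q\ge c$ with $\tilde p'\le q$, and the exclusion of $r=\infty$ and $\tilde r=\infty$) agrees with the paper's range verification.
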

\begin{proof}
 The lemma follows directly from Lemma \ref{lem: hom bc}, Corollary \ref{cor: hom bc}, and the Equivalence Theorem \ref{thm: equiv KT}. The range of validity of these estimates is identical to that of the generalized homogeneous estimates except for the usual change of notation.

Let us verify that the range of the exponents is the same as that assumed in Theorem \ref{thm: glob inhom est KT}. The assumption there is that $\trip$ and $(\infty, \tilde r, \tilde p)$ are jointly KT-acceptable and that $1<q<\infty$. This immediately implies the following range
\[
 1 < q < \infty, \quad 1 \leq p, r, \tilde p, \tilde r < \infty. 
\]
Next, the requirement that $\trip$ is KT-acceptable and that $q<\infty$ leads to $p<r$. Therefore $r>1$. The scaling condition \eqref{cond: scal 1/q} together with the fact that $\trip$ is KT-acceptable implies that $p<\tilde r'$. The identity $\HM(p,r)=\HM(\tilde p, \tilde r)$ together with $p<r$, and $\tilde p \leq \tilde r$, and $p<\tilde r'$, leads to
\[
  p < \tilde r' \leq \tilde p' < r.
\]
The latter implies that $1< \tilde p \leq \tilde r < \infty$. Thus we obtain the following range
\[
 1 < q, r, \tilde p, \tilde r < \infty, \quad 1 \leq p < \infty, 
\]
which is the range for which the estimates in this lemma are proven. Analogously for the dual case.
\end{proof}

\subsection{Global inhomogeneous endpoint estimates with $q=\tilde q'$} \label{sect: endp glob inhom hypo KT}

In this paragraph we assume that the $L^p_v$-norms are given over a bounded velocity space $V \subset \R^n$ and prove the inhomogeneous estimates \eqref{eq: Strich inhom endp}.

We suppose now that $P$  lies on the hypotenuse of $\Delta OAB$ in Figure \ref{fig: 1} and that it also belongs to $\mathcal E_0$. The 4-vector $(1/r,1/p, 1/\tilde r, 1 /\tilde p)$ should satisfy every inequality in $\mathcal E_0$ as a strict inequality. Of course, we cannot remove the restriction $\HM(p,r)=\HM(\tilde p',\tilde r')$, but we shall perturb these exponents in such a way that they always satisfy the latter condition. The exponents $(1/q, 1/\tilde q)$ will remain fixed throughout this paragraph. We consider the following perturbations

\begin{align*}
 \rp r_0 = \rp r + \epsilon, \quad\;\;  \rp {\tilde r_0}= \rp {\tilde r} + \epsilon, \quad\;\; \rp p_0 = \rp p - \epsilon, \quad\;\;  \rp {\tilde p_0} = \rp {\tilde p} - \epsilon,\;\;\\
\rp r_1 = \rp r - 3\epsilon, \quad  \rp {\tilde r_1} = \rp {\tilde r} - 3\epsilon, \quad \rp p_1 = \rp p + 3\epsilon, \quad  \rp {\tilde p_1} = \rp {\tilde p} + 3\epsilon.
\end{align*}

We have that $\beta(q, r_0, \tilde q, \tilde r_0) = 2n\epsilon$  and $\beta(q, {r}_1, \tilde q, \tilde {r}_0)=\beta(q, {r}_0, \tilde q, \tilde {r}_1)  = -2n\epsilon$. Hence the maps
\begin{align*}
                A: \SpaceLqrpd{\tilde q}{\tilde r_0}{\tilde p_0}  \times \SpaceLqrpd{q}{r_0}{p_0} &\rightarrow l_{-2\epsilon}^\infty,\\
	        A: \SpaceLqrpd{\tilde q}{\tilde r_0}{\tilde p_0}  \times \SpaceLqrpd{q}{r_1}{p_1} &\rightarrow l_{2\epsilon}^\infty,\\
		A: \SpaceLqrpd{\tilde q}{\tilde r_1}{\tilde p_1}  \times \SpaceLqrpd{q}{r_0}{p_0} &\rightarrow l_{2\epsilon}^\infty,
\end{align*}
are bounded.  In virtue of Lemma \ref{lem: interp T} and the well-known interpolation identity
\begin{align} \label{eq: Lp interp}
 (L^p(\R; \mathcal A_0), L^p(\R; \mathcal A_1))_{\theta, p} = L^p(\R; (\mathcal A_0, \mathcal A_1)_{\theta, p}), \quad 1<p<\infty,
\end{align}
see \cite{BL}, the map
\begin{align*}
         A: (\SpaceLqrpd{\tilde q}{\tilde r_0}{\tilde p_0},& \SpaceLqrpd{\tilde q}{\tilde r_1}{\tilde p_1})_{1/4, \tilde q'}  \times \\ &(\SpaceLqrpd{q}{r_0}{p_0}, \SpaceLqrpd{q}{r_1}{p_1})_{1/4, q'}  \To (l_{2\epsilon}^\infty, l_{-2\epsilon}^\infty)_{1/2,1}
\end{align*}
is also bounded. In view of the fact that $V$ is bounded we have that $L^{\tilde P'}(V) \hookrightarrow L^{\tilde p_0'}(V)$ and $L^{\tilde P}(V) \hookrightarrow L^{\tilde p_1'}(V)$ whenever $1 \leq \tilde P \leq \min(\tilde p_0, \tilde p_1)$. Analogously,
$L^{P'}(V) \hookrightarrow L^{p_0'}(V)$ and $L^{P'}(V) \hookrightarrow L^{p_1'}(V)$ whenever $1 \leq P \leq \min(p_0, p_1)$. Thus we also have that the map
\begin{align*}
         A: (\SpaceLqrpd{\tilde q}{\tilde r_0}{\tilde P},& \SpaceLqrpd{\tilde q}{\tilde r_1}{\tilde P})_{1/4, \tilde q'}  \times \\ &(\SpaceLqrpd{q}{r_0}{P}, \SpaceLqrpd{q}{r_1}{P})_{1/4, q'}  \To (l_{2\epsilon}^\infty, l_{-2\epsilon}^\infty)_{1/2,1}
\end{align*}
is bounded. Finally, in view of the interpolation identity \eqref{eq: Lp interp}, it follows that
\begin{align*}
           A: \SpaceLqrp{\tilde q'}{\tilde r', \tilde q'}{\tilde P'} \times \SpaceLqrp{q'}{r', q'}{P'} \rightarrow l^1.
\end{align*}
In view of Lemma \ref{lem: bil A}, this implies the estimate
\begin{align}
\normQRPf{W(t)F}{q}{r,q}{P} \lesssim_V \normQRPf{F}{\tilde q'}{\tilde r', \tilde q'}{\tilde P'},
\end{align}
for any $P, \tilde P$, such that $1\leq P < p$ and $1 \leq \tilde P < \tilde p$, and any two jointly KT-acceptable exponent triplets $\trip$ and $\tript$ whose exponents further satisfy the following conditions $1 <q$, $\tilde q < \infty$, $q=\tilde q'$.

\section{Counterexamples} \label{sect: counterex}

In this section we give necessary conditions for the range of validity of the \Str estimates for the KT equation by means of counterexamples.

We first make the general remark that the validity of \Str estimates with exponents $r=\infty$ in the homogeneous setting, and with $r=\infty$ or $\tilde r=\infty$ in the inhomogeneous setting, is completely solved in \cite{Ovch4}. There it is proved that the only valid estimate of the form
\[
 \normQRP{U(t)f}{q}{\infty}{p} \lesssim \normBC{f}{b}{c}, \quad \forall f \in \spaceLbc
\]
in any spatial dimension is for $q=p=b=c=\infty$. Also, the only valid inhomogeneous estimates of the form
\[
 \normQRP{W(t)F}{q}{r}{p} \lesssim \normQRPtd{F}{q}{r}{p}, \quad \forall F \in \spaceLqrptd,
\]
in any spatial dimension with either $r=\infty$ or $\tilde r=\infty$ are only those whose exponents are explicitly stated in Definition \ref{dfn: joint accept}.

\subsection{Homogeneous estimates} By scaling, that is Lemma \ref{lem: scaling}, estimate
\begin{equation*}
 \normQRP{U(t)f}{q}{r}{p} \lesssim \normA{f}{a}, \quad \forall f \in \spaceLa{a}, 
\end{equation*}
 holds only if
\begin{align*}
 \rp q + \frac n r = \frac n a, \quad a=\HM(p,r).
\end{align*}

Let us next find the upper bound $r \leq r^*(a)$. It is enough to consider only the special case $a=2$. We shall prove the equivalent condition $q \geq 2$. (In general $r \leq r^*(a)$ and $q \geq a$ are equivalent.) The claim follows directly by the translation invariance in $t$ of the $TT^*$-operator. Indeed, first recall that the above estimate with $a=2$ is equivalent to
\[
 \normQRP{TT^*F}{q}{r}{r'} \lesssim \normQRP{F}{q'}{r'}{r}, \quad \forall F \in {L^{q'}_tL^{r}_xL^{r'}_v}.
\]
Then, in view of the famous H\"ormander's lemma \ref{lem: Horm}, we have that $q \geq q'$, or equivalently $q \geq 2$.

For finding a lower bound on $r$ we use the translation invariance in $x$ of $TT^*$ and thus we get that $r \geq r'$, or equivalently, $r \geq 2$. 
As usual, the condition $r\geq a$ in the general case $0<a<\infty$ follows by the power invariance (\ref{power transform}).

Let us verify the translation invariance in $t$ of $TT^*$. Consider $F_{\tau}(t) = F(t-\tau)$. For $TT^*F_{\tau}$ we have
\begin{align*}
    \Int U(t-s)F(s-\tau) ds = \Int U(t-\tau-\sigma)F(\sigma) d\sigma,
 \end{align*}
or in other words $\{TT^*F_{\tau}\}(t)$ $=$ $\{TT^*F\}(t-\tau)$.

\begin{lem}[H\"ormander \cite{H}] \label{lem: Horm}
 Whenever a (non-trivial) linear and bounded operator maps $L^p(\R^n)$ to $L^q(\R^n)$, $1\leq p,q < \infty$, and additionally this operator is translation invariant, then we must have that $p \leq q$.
\end{lem}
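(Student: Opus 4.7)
The plan is to exploit the fact that in $L^p$ with $p < \infty$, the $p$-norm of a sum of well-separated translates of a fixed function $f$ behaves like $N^{1/p} \|f\|_p$ (where $N$ is the number of translates), and to combine this with translation invariance to derive an inequality $N^{1/q} \lesssim N^{1/p}$ which forces $p \leq q$ when $N \to \infty$.

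First, since $T$ is non-trivial, fix $f \in L^p(\R^n)$ with $Tf \neq 0$. Choose translation vectors $x_1, \dots, x_N \in \R^n$, and set $F_N = \sum_{k=1}^N f(\cdot - x_k)$. I would invoke translation invariance to write $TF_N = \sum_{k=1}^N (Tf)(\cdot - x_k)$, and then the boundedness of $T$ gives
\[
\Big\| \sum_{k=1}^N (Tf)(\cdot - x_k) \Big\|_{L^q} \leq \|T\| \Big\| \sum_{k=1}^N f(\cdot - x_k) \Big\|_{L^p}.
\]
The key lemma to establish next is that for any $g \in L^s(\R^n)$ with $1 \leq s < \infty$,
\[
\Big\| \sum_{k=1}^N g(\cdot - x_k) \Big\|_{L^s}^s \longrightarrow N \|g\|_{L^s}^s
\]
as $\min_{j \neq k} |x_j - x_k| \to \infty$. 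The standard way to prove this is by first-approximating $g$ in $L^s$ by a function of compact support (possible because $s < \infty$), for which the translates become literally disjointly supported once the separation exceeds the diameter, and then controlling the error term by the triangle inequality. This is where both hypotheses $p < \infty$ and $q < \infty$ are used.

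Given the above, I would choose a sequence of configurations $\{x_1^{(m)}, \dots, x_N^{(m)}\}_m$ with separation tending to infinity, apply the limiting statement to $g = f \in L^p$ and to $g = Tf \in L^q$ simultaneously, and pass to the limit on both sides to obtain
\[
N^{1/q} \|Tf\|_{L^q} \leq \|T\| \, N^{1/p} \|f\|_{L^p}.
\]
Since $\|Tf\|_{L^q} > 0$, this forces $N^{1/q - 1/p}$ to stay bounded as $N \to \infty$, which is possible only if $1/q \leq 1/p$, that is, $p \leq q$.

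The main subtlety, and essentially the only obstacle, is the compact-support approximation argument used to justify the asymptotic $\|\sum_k g(\cdot - x_k)\|_s^s \to N\|g\|_s^s$. The conclusion fails if $s = \infty$ (a single non-trivial function and its translate can already achieve norm equal to that of one copy), which is exactly why the hypotheses $p, q < \infty$ cannot be dropped. Apart from this, the argument is a direct manipulation of norms, and once the asymptotic is in hand the comparison $N^{1/q} \lesssim N^{1/p}$ is immediate.
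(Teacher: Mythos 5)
Your argument is correct, and it is in fact the standard classical proof of this result, essentially the one appearing in H\"ormander's 1960 Acta paper (Theorem 1.1 there). The paper at hand does not reproduce a proof of this lemma at all; it simply cites H\"ormander, so there is no in-paper argument to compare against. Your chain of reasoning is sound: the asymptotic $\bigl\|\sum_{k=1}^N g(\cdot - x_k)\bigr\|_{L^s}^s \to N\|g\|_{L^s}^s$ as the minimal separation tends to infinity holds for any fixed $N$ and any $g \in L^s$ with $s<\infty$, by the compact-support approximation you sketch (the error incurred is at most of order $N\varepsilon + N^{1/s}\varepsilon$, which goes to zero for each fixed $N$ as $\varepsilon \to 0$). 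Applying this simultaneously to $f \in L^p$ and $Tf \in L^q$, passing to the limit in the bounded-operator inequality, and letting $N\to\infty$ then forces $1/q \le 1/p$. Your remark that the asymptotic degenerates at $s=\infty$ is also the right explanation for why the hypotheses $p,q<\infty$ are required. One small point worth being explicit about in a final write-up: you should fix $N$ first, send the separation to infinity to obtain $N^{1/q}\|Tf\|_q \le \|T\| N^{1/p}\|f\|_p$ for that $N$, and only then let $N\to\infty$; the order of limits matters because the error in the compact-support approximation grows linearly in $N$.
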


\begin{rem}
H\"ormander's lemma remains true in a more general setting. For example, the space $L^p$ and $L^q$ can be vector-valued, i.e.
$L^p(X;\mathcal B_1)$ and $L^q(X;\mathcal B_1)$ respectively, where $X \subseteq \R^n$ is the set $\{x=(x_1,\dots,x_n)| a_i < x_i < \infty, i=1,\dots n\}$ for some fixed $a_i$ $\in$ $\R\cup\{-\infty\}$, and $\mathcal B_1$ and $\mathcal B_2$ are some Banach spaces. Furthermore, the spaces $L^p$ and $L^q$ may be mixed Lebesgue spaces (or Bochner spaces in the vector-valued setting). Suppose for example that $p=(p_1, \dots, p_k)$ and $q=(q_1, \dots, q_l)$ and $L^p$ and $L^q$ are the corresponding mixed Lebesgue spaces with the usual notation. Consider the bounded linear operator $T: L^p \To L^q$. Let $u(x_1,\dots, x_k) \in L^p$, $v(y_1,\dots, y_l) \in L^q$, $\tau_h$ be the operator defined by
\[
 \tau_h u(x_1,\dots, x_i, \dots, x_k) = u(x_1,\cdots, x_i+h, \dots, x_k),
\]
and similarly let $\sigma_h$ be the operator defined by
\[
 \sigma_h v(y_1,\dots, y_j, \dots, y_l) = v(y_1,\dots, y_j+h, \dots, y_l).
\]
Then, if we have that
\[
 T\tau_h u = \sigma_h Tu, \quad \forall h \geq 0,
\]
it follows that either $q_j \geq p_i$, or $T=0$. The proof of that statement is virtually the same as that of Lemma \ref{lem: Horm}.
\end{rem}

\subsection{Generalized homogeneous estimates}  \label{sect: counterex mix}
Let us consider the homogeneous \Str estimate
\begin{equation*}
 \normQRP{U(t)f}{q}{r}{p} \lesssim \normBC{f}{b}{c},  \quad \forall f \in \spaceLbc, 
\end{equation*}
for data outside the transport class. Most of the arguments from the preceding paragraph apply to this case as well. By scaling, we have that the conditions
\begin{align} \label{cond: range r b}
 \rp q + \frac n r = \frac n b, \quad \HM(p,r)=\HM(b,c) \stackrel{def}{=}a,
\end{align}
are necessary. The following conditions
\begin{align}\label{cond: range r}
  r \geq p, \quad \rp q < n \left(\rp p - \rp r\right), \quad \text{or } \; q=\infty,\; 1 \leq\, p=r \,\leq \infty,
\end{align}
are also necessary. To that end, let us consider the equivalent estimate
\begin{equation*}
 \normQRP{TT^*F}{q}{r}{p} \lesssim \normQRP{F}{1}{b}{c}, \quad \forall F \in L^1_t\spaceLbc.
\end{equation*}
The claim is proved for it in the next paragraph. Analogously, we obtain that $b\leq c$. Indeed, the latter estimate is equivalent to
\begin{equation*}
 \normQRP{TT^*F}{\infty}{b'}{c'} \lesssim \normQRP{F}{q'}{r'}{p'},  \quad \forall F \in \spaceLqrpd.
\end{equation*}
The exponent triplet must be KT-acceptable (proved in the next paragraph) and thus $b'\geq c'$. In fact, conditions \eqref{cond: range r} and \eqref{cond: range r b} imply that either $p < b \leq a \leq c < r$ ($p<b$), or $a=b=c=p=r$ and $q=\infty$.

We do not have a suitable counterexample showing the necessity of the upper bound $r^*(c)$ in Theorem \ref{thm: Strich admiss mix KT} for the validity of the generalized homogeneous estimates (in the case when $b\neq c$, $n>1$).

\subsection{Generalized inhomogeneous estimates} \label{sect: counterex inhom}
Let us consider now the inhomogeneous \Str estimate
\begin{equation*}
 \normQRP{W(t)F}{q}{r}{p} \lesssim \normQRP{F}{\tilde q'}{\tilde r'}{\tilde p'}. 
\end{equation*}
By scaling, see Lemma \ref{lem: scaling}, we obtain that the restrictions
\begin{align*}
 \rp q + \rp {\tilde q} = n \left(1 - \rp r - \rp {\tilde r} \right), \quad \HM(p,r)=\HM(\tilde p', \tilde r') \stackrel{def}{=}a,
\end{align*}
are necessary.

Consider $F(t,x,v) = \chi \left( 0 \leq t \leq 1, \abs{x} \leq 1, \abs{v} \leq 1 \right).$ When $t \gg 1$ we have that
\[
 \{TT^*F\}(t) = W(t)F \approx \chi \left( \abs{v- \frac x {t}} \leq \frac 1 t, \abs{v} \leq 1 \right) \approx \chi \left\{ v \sim \frac 1 t, x \sim t\right\}.
\]
Hence,
\[
 \normBC{W(t)F}{r}{p} \sim t^{\frac n r - \frac n p}, \quad t \gg 1.
\]
It follows that $ \normQRP{W(t)F}{q}{r}{p} < \infty$ only if
\begin{equation} \label{cond: inhom accep}
 \left(\frac n r - \frac n p\right)q < -1, \quad \text{or if}\;\, q=\infty, r=p.
\end{equation}
By the duality Lemma \ref{lem: dual}, the dual exponents $\tript$ must also satisfy \eqref{cond: inhom accep}. Thus we have that the conditions $p \leq  r$ and $\tilde p \leq \tilde r$ are necessary for the validity of the considered estimate. The same conclusion applies for the $TT^*$-operator.

We now show that conditions
\begin{align*}
   \frac 1 q + \frac 1 {\tilde q} \leq 1,   \quad \frac 1 r + \frac 1 {\tilde r} \leq 1,
 \end{align*}
are necessary for the validity of the considered estimate. Indeed, the claim follows from the translation invariance of $TT^*$ in $t$ and $x$, H\"ormander's lemma \ref{lem: Horm}, and the equivalence of the considered estimates for $TT^*$ and $W(t)$. Note also that the cases when $\tilde q=1$ or $\tilde r=1$ are trivial and for example by duality can always be replaced by the cases $q=1$ or $r=1$. Thus we have verified that $\trip$ and $\tript$ must be two jointly KT-acceptable exponent triplets, apart from the necessity of condition
\begin{equation*}
        \frac {n -1}{p'} < \frac {n}{\tilde{r}}, \quad \frac {n-1}{\tilde{p}'} < \frac {n}{r}, \quad n>1 
\end{equation*}
for which we do not have a suitable counterexample. However, we can show that the similar condition
\begin{equation*}
        \frac n {p'} < \rp {\tilde q} + \frac {n}{\tilde{r}}, \quad \frac {n}{\tilde{p}'} < \rp {q} + \frac {n}{r}, 
\end{equation*}
is sharp. Indeed, the latter is a direct consequence of \eqref{cond: KT accept} and \eqref{cond: joint KT accept 1}. The latter condition implies the former whenever $p' \leq \tilde q$ and $\tilde p' \leq q$. Thus, if there are some other global inhomogeneous estimates for $W(t)$ not included in Theorem \ref{thm: glob inhom est KT}, they must belong to the range $\tilde q < p'$ or $q < \tilde p'$.

\subsection{Local inhomogeneous estimates} \label{sect: counterex loc inhom}

In this paragraph we show the fact that in the context of the KT equation the local inhomogeneous estimates do not exist in a ``full neighborhood'' around a given local inhomogeneous \Str estimate. This presents an obstruction for the application of the perturbation techniques of Keel and Tao \cite{KT} and their extension by Foschi \cite{F}. The endpoint estimates (of the type that lie on the hypothenuse $AB$ in Figure \ref{fig: 1}) remain unresolved.

For example, consider the estimate
\begin{equation}\label{eq: inhom Strich loc}
 \norm{W(t)F}_{L^{q}_t \left([2,3]; L^{r}_x L^{p}_v\right)} \lesssim \norm{F}_{L^{\tilde q'}_t([1,2]; L^{\tilde r'}_xL^{\tilde p'}_v)}. 
\end{equation}

Take $F(t,x,v)=\chi \left(t \in [0, 1],\; (x,v) \in Q_R \right)$, where by $Q_R$ we denote the square of side length $2R$ centered at the origin of $\R^{2n}$. If we denote $\normInfty{x} = \sup_{1 \leq i \leq n} \abs{x_i}$, for $x$ $=$ $(x_1,..,x_n)$, we can write the latter as
\begin{align*}
 Q_R = \left\{ (x,v) : \normInfty{x} \leq R,\; \normInfty{v} \leq R \right\}.
\end{align*}
Hence,
\[
 \norm{F}_{L^{\tilde q'}_t([1,2]; L^{\tilde r'}_xL^{\tilde p'}_v)} \sim R^{\frac n {\tilde p'} + \frac n {\tilde r'}}.
\]
We now set $\tau=t-s$, and consider the set $Q_R(\tau)$ given by
\begin{align*}
 \normInfty{x - \tau v} \leq R,\; \normInfty{v} \leq R.
\end{align*}
Then, for $t \in [2,3]$, $s \in [0,1]$, equivalently for $\tau \in [1,3]$, we have the inclusions
\[
 Q_{R/4} \subset Q_R(\tau) \subset Q_{4R}.
\]
Hence,
\[
 \norm{W(t)F}_{L^{q}_t \left([2,3]; L^{r}_x L^{p}_v\right)} \sim R^{\frac n p + \frac n r}.
\]
We conclude that condition
\[
 \frac 1 r + \frac 1 p = \frac 1 {\tilde r'} + \frac 1 {\tilde p'}
\]
is necessary for the validity of the local estimates \eqref{eq: inhom Strich loc}.

\section{Remaining unresolved \Str estimates} \label{sect: que}

Here we collect some of the remaining estimates for the KT equation that need to be resolved in order the full range of validity of \Str estimates to be known.

\begin{enumerate}
 \item The endpoint homogeneous estimate in higher dimensions $n>1$
\begin{align*} 
\normQRP{U(t)f}{a}{r^*(a)}{p^*(a)} \lesssim \normLa{f}{a}.
\end{align*}
 \item \label{en: 2} The full range of the non-endpoint inhomogeneous estimates
\begin{equation*}
 \normQRP{W(t)F}{q}{r}{p} \lesssim \normQRP{F}{\tilde q'}{\tilde r'}{\tilde p'}, \quad q>\tilde q'.
\end{equation*}
 In particular, one needs to either show that the condition
\begin{equation*}
        \frac {n -1}{p'} < \frac {n}{\tilde{r}}, \quad \frac {n-1}{\tilde{p}'} < \frac {n}{r},
\end{equation*}
is necessary, or otherwise find and prove the remaining estimates.
 \item The endpoint inhomogeneous estimates with either $q=\tilde q'$ or $q=\infty$ or $\tilde q=\infty$.
 \item The full range of the local inhomogeneous estimates. Equivalently, either show that $\mathcal E_0=\mathcal E$, or otherwise find and prove the remaining estimates.
\end{enumerate}

\nocite{BP}

\bibliographystyle{amsplain}
\bibliography{research_Stat_2}
\end{document}